\newcommand{\Supplemental}[2]{\iftoggle{SUPPLEMENTAL}{#1}{#2}}
\newcommand{\Blinded}[2]{\iftoggle{BLINDED}{#1}{#2}}
   \g@addto@macro\TPT@defaults{\linespread{1}\footnotesize} 
  \renewenvironment{thebibliography}[1]%
  {\begin{oldthebibliography}{#1}\setlength{\parskip}{0ex}\setlength{\itemsep}{0ex}}%
  {\end{oldthebibliography}}
\Crefname{assumption}{Assumption}{Assumptions}
\crefname{equation}{}{} 
\Crefname{equation}{Equation}{Equations} 
\Crefname{method}{Method}{Methods}
\Crefname{conjecture}{Conjecture}{Conjectures}
\crefname{conjecture}{Conjecture}{Conjectures}
\Crefname{fact}{Fact}{Facts}
\crefname{fact}{Fact}{Facts}
\Crefname{enumi}{Task}{Tasks}
\crefname{enumi}{Task}{Tasks}
\newcommand{\independenT}[2]{\mathrel{\rlap{$#1#2$}\mkern2mu{#1#2}}}
\newcommand\independent{\protect\mathpalette{\protect\independenT}{\perp}} 
\theoremstyle{plain}
\newtheorem{theorem}{Theorem}
\newtheorem{lemma}[theorem]{Lemma}
\newtheorem{fact}[theorem]{Fact}
\newtheorem{proposition}[theorem]{Proposition}
\theoremstyle{definition}
\newtheorem{assumption}{Assumption}
\newtheorem{definition}{Definition}
\declaretheorem[style=definition,qed=$\qedsymbol$]{method} 
\newenvironment{enumeratecomp}[1][i.]%
  {\begin{enumerate}[#1]%
    \setlength{\itemsep}{0pt}%
    \setlength{\parskip}{0pt}}%
  {\end{enumerate}}
\newcommand{\matf}[1]{\underline{\boldsymbol{\mathbf{#1}}}} 
\newcommand{\vecf}[1]{\boldsymbol{\mathbf{#1}}} 
\DeclareMathOperator{\Var}{Var}
\newcommand{\R}{\mathbb{R}}
\newcommand{\FWER}{\mathrm{FWER}}
\newcommand{\BetaDist}{\mathrm{Beta}}
\newcommand{\UnifDist}{\mathrm{Unif}}
\let\Pr\relax \DeclareMathOperator{\Pr}{P} 
\DeclareMathOperator{\1}{\mathds{1}}
\newcommand{\Ind}[1]{\1\left\{#1\right\}}
\newcommand{\Normal}{\mathrm{N}}
\newcommand{\Normalp}[2]{\Normal\left(#1,#2\right)}
\newcommand{\dconv}{\stackrel{d}{\to}}
\providecommand{\abs}[1]{\lvert#1\rvert}
\providecommand{\norm}[1]{\lVert#1\rVert}
\let\originalleft\left
\let\originalright\right
\renewcommand{\left}{\mathopen{}\mathclose\bgroup\originalleft}
\renewcommand{\right}{\aftergroup\egroup\originalright}
\newcommand{\mockalph}[1]{}  
\newcounter{allenumi}
\def\calibdecerrfiveten{0.02} 
\def\calibdecerrone{0.05} 
\def\NDRAWSfiveten{2\times10^5}
\def\NDRAWSone{10^6}
\def\caliberrfive{0.001} 
\begin{document}

\title{Comparing distributions by multiple testing \\ across quantiles or {CDF} values}

\author{\Blinded{Matt Goldman\thanks{\Supplemental{}{Microsoft Research, }mattgold@microsoft.com.} \and David M.\ Kaplan\thanks{\Supplemental{}{Corresponding author.  Department of Economics, University of Missouri, }kaplandm@missouri.edu. 
\Supplemental{}{%
We thank the referees and editors for helpful revision ideas that improved the content and presentation. 
For other helpful feedback, we thank Zack Miller and participants at the 2016 Midwest Econometrics Group and 2017 NY Camp Econometrics. 
We thank Boaz Nadler and Amit Moscovich for alerting us to important related works from statistics and computer science.%
}%
}}{[BLINDED]\Supplemental{}{\thanks{%
}}}}
\date{\today}

\maketitle


\doublespacing
\singlespacing

\begin{abstract}
\Supplemental{This supplement includes 
XXX.}{%
When comparing two distributions, it is often helpful to learn at which quantiles or values there is a statistically significant difference.  
This provides more information than the binary ``reject'' or ``do not reject'' 
decision of a global goodness-of-fit test. 
Framing our question as multiple testing across the continuum of quantiles $\tau\in(0,1)$ or values $r\in\R$, we show that the Kolmogorov--Smirnov test (interpreted as a multiple testing procedure) achieves strong control of the familywise error rate. 
However, its well-known flaw of low sensitivity in the tails remains. 
We provide an alternative method that retains such strong control of familywise error rate while also having even sensitivity, i.e., equal pointwise type I error rates at each of $n\to\infty$ order statistics across the distribution. 
Our one-sample method computes instantly, using our new formula that also instantly computes goodness-of-fit $p$-values and uniform confidence bands. 
To improve power, we also propose stepdown and pre-test procedures that maintain control of the asymptotic familywise error rate. 
One-sample 
and two-sample 
cases are considered, as well as extensions to regression discontinuity designs and conditional distributions. 
Simulations, empirical examples, and code are provided.

\vspace{\baselineskip}
\textit{JEL classification}: 
C12, 
C14, 
C21  

\vspace{\baselineskip}
\textit{Keywords}: Dirichlet; familywise error rate; Kolmogorov--Smirnov; probability integral transform; stepdown
}
\end{abstract}

\Blinded{\newpage}{}

\doublespacing
\onehalfspacing

\Supplemental{\vfill\pagebreak}{}

\section{Introduction}\label{sec:intro}

Increasingly, economists compare not only means, but entire distributions. 
This includes comparing income distributions (over two time periods, geographic areas, or demographic subpopulations) and a variety of economic outcomes in experiments and program evaluation, either comparing unknown ``treated'' and ``untreated'' distributions (i.e., two-sample inference), or comparing an unknown ``treated'' distribution to a known population distribution (i.e., one-sample inference). 
For example, in a paper garnering 308 citations to date,\footnote{Source: Google Scholar, July 23, 2017.} \citet{BitlerEtAl2006} study a Connecticut welfare reform program using quantile treatment effects on earnings and other outcomes (Section V), arguing that mean effects miss the substantial amount of economically important heterogeneity. 
In other recent empirical work using quantile treatment effects, 
\citet{DjebbariSmith2008} study the PROGRESA conditional cash transfer program in Mexico, 
\citet{JacksonPage2013} study the Tennessee STAR class size reduction data, and 
\citet{BitlerEtAl2008} study a Canadian welfare reform experiment. 
Our new methodology for comparing distributions complements the quantile treatment effect methods used in these papers. 

To compare distributions, the most common statistical tests answer one of two questions: 
1) Are the distributions identical or different? 
2) Do the distributions differ at the median (or another pre-specified quantile)?  
Often, the question with the most economic and policy relevance is instead: 
3) Across the entire distribution, at which quantiles or values do the distributions differ? 
For example, one may care not just \emph{whether} two (sub)populations have different income distributions, but \emph{where} (at which quantiles or values) they differ.  
In an experimental setting, the question is at which values the treatment effect is statistically significant; see \cref{sec:emp} for an empirical example. 

We contribute to answering question (3). 
First, we formalize the question as multiple testing of a continuum either of CDF hypotheses at different values ($r\in\R$) or quantile hypotheses at different quantile indices ($\tau\in(0,1)$), which we call ``quantile multiple testing.'' 
This framework appears to be new to the literature on distributional inference. 
Second, we show that the Kolmogorov--Smirnov (KS) test can be interpreted as answering question (3) and that it appropriately controls the probability of having at least one false rejection, i.e., controls the familywise error rate (FWER).%
\footnote{There are other reasonable ways to quantify control of type I errors for multiple testing, such as the false discovery rate (FDR) of \citet{BenjaminiHochberg1995} and the $k$-FWER and false discovery proportion (FDP) of \citet{LehmannRomano2005fwer}.} 
Third, we propose a new approach to answer question (3). 
Like the KS, our approach is nonparametric and appropriately controls FWER, without being conservative (like the Bonferroni method). 
Unlike the KS, our approach maintains ``even sensitivity'' (as quantified by pointwise size) across the continuum of quantile hypotheses. 
This addresses the long noted problem of the KS test's poor power against deviations in the tails \citep[e.g.,][p.\ 117]{Eicker1979}. 
Fourth, we provide a new formula to instantly compute our method as well as related goodness-of-fit $p$-values and uniform confidence bands for an unknown CDF. 
Fifth, we refine our basic method with stepdown and pre-test procedures to improve power without sacrificing strong control of FWER. 

Question (3) cannot necessarily be answered by methods addressing questions (1) or (2). 
The answer to question (1) is only ``yes'' or ``no.'' 
Using a method that answers question (2), if separate hypothesis tests are run at many different quantiles, each with size $\alpha$, then the well-known multiple testing problem is that the overall probability of making any false rejection (a.k.a.\ ``false discovery'') is above $\alpha$; e.g., see \citet{RomanoEtAl2010}. 
Even with two identical distributions (so all quantiles are identical), 
such a naive procedure may falsely reject equality for at least one quantile $30\%$ of the time even if $\alpha=5\%$. 
This $30\%$ overall false rejection probability (formalized later) is the FWER. 
Even a more sophisticated ``multiple quantile treatment effect'' approach like in the aforementioned \citet{BitlerEtAl2006} has drawbacks: 
a) the choice of quantiles is arbitrary, 
b) standard asymptotic approximations break down in the tails (where extremal quantile methods are required), and 
c) no finite-sample results are available. 
Our approach addresses all three of these problems. 

The KS test \citep{Kolmogorov1933,Smirnov1939,Smirnov1948} 
is a goodness-of-fit (GOF) test that is only intended to answer question (1) above, 
but we show that it readily identifies a set of values at which the population and null distribution functions differ 
in a way that controls FWER. 
We call this the ``KS-based'' multiple testing procedure to distinguish it from the ``KS test'' for GOF. 
Using other GOF approaches like Cram\'{e}r--von Mises, Anderson--Darling, and permutation tests, identifying \emph{where} two distributions differ is not possible. 

The KS test, however, is known to suffer from low sensitivity (i.e., low power) in the tails.  
More specifically, the allocation of sensitivity is uneven: it is concentrated in the middle of the distribution, as has been discussed formally in the literature since (at least) \citet{Jaeschke1979,Eicker1979}. 
The KS test also has an uneven distribution of sensitivity to deviations above versus below the null distribution at any point $x$ away from the median. 

The KS test's uneven sensitivity can lead to obviously incorrect inferences. 
For example, let sample size $n=20$, with null hypothesis distribution $\UnifDist(0,1)$.  
Even if five of the $20$ observations exceed one million, any of which alone clearly contradicts the null hypothesis, the KS test still fails to reject at a $10\%$ level.%
\footnote{R code: 
\texttt{ks.test(c(1:15/21,10\string^6+1:5),punif)} results in \texttt{D = 0.25, p-value = 0.1376}.} 
This is not a small-sample issue: with $500$ of $n=\num{200000}$ observations exceeding one million, the KS still fails to reject at a $10\%$ level.\footnote{R code: 
\texttt{n=200000;k=500;ks.test(c(1:(n-k)/(n+1),10\string^6+1:k),punif)} results in \texttt{D = 0.0025, p-value = 0.1641}.}

For GOF testing, i.e., testing the global null $H_0 \colon F(\cdot)=F_0(\cdot)$, as well as uniform confidence bands, 
\citet{BujaRolke2006} appear to be the first to achieve ``even sensitivity'' by using the probability integral transform. 
The probability integral transform reduces the problem to order statistics from a standard uniform distribution, whose finite-sample distribution is known. 
Their one-sample uniform confidence band (from Section 5.1)\ was eventually detailed and published by \citet{AldorNoimanEtAl2013}. 
\citet{BujaRolke2006} also construct a two-sample permutation test for equality. 
Although computation of our one-sample procedure is equivalent to computation of their uniform band, they do not propose any multiple testing procedure or discuss FWER, and our two-sample procedure is entirely new. 
Other papers have explored implications of the same probability integral transform, such as \citet{MoscovichEtAl2016} and \citet{MoscovichNadler2017} in the computer science literature, but only for GOF testing or uniform confidence bands, never quantile multiple testing. 

The same probability integral transform underlies our methods.  
It provides finite-sample distributions while being distribution-free, and it facilitates finite-sample control of both overall FWER and pointwise type I error rates. 
The tradeoff is that iid sampling is required. 
However, the finite-sample sampling distribution (of the true CDF evaluated jointly at all order statistics) turns out to be equivalent to the finite-sample posterior distribution from the continuity-corrected Bayesian bootstrap in \citet{Banks1988}: in the iid case, the uniform confidence band of \citet{AldorNoimanEtAl2013} is also a Bayesian uniform credible band.%
\footnote{This is shown formally in a prior version of this paper; see Proposition 10 in \url{https://faculty.missouri.edu/~kaplandm/pdfs/GK2016_dist_inf_KStype_longer.pdf}.} 
We are hopeful that further work will show that our iid assumption may be ``relaxed'' by using the Bayesian bootstrap to allow sampling weights \citep[as in][]{Lo1993} and clustering \citep[as in][]{CameronEtAl2008} while maintaining exact finite-sample properties in the iid case. 

We provide a closed-form calibration formula that allows not only our multiple testing procedure but also the uniform confidence band and GOF $p$-values of \citet{BujaRolke2006} to be computed instantly. 
This formula replaces just-in-time simulations that can last (depending on sample size) minutes or even hours. 

Many of our new multiple testing results rely on viewing the problem from the quantile perspective rather than the probability perspective of \citet{BujaRolke2006} and related papers. 
Seeing the problem as testing multiple quantiles helps us establish FWER properties and is critical for our procedures to improve power. 
Our strategy is to test $n$ different quantile hypotheses using the $n$ order statistics (i.e., ordered sample values). 
In contrast, papers like \citet{BujaRolke2006} apply the null CDF $F_0(\cdot)$ to the order statistics $X_{n:1}<\cdots<X_{n:k}<\cdots<X_{n:n}$, comparing $F_0(X_{n:k})$ to certain critical values for $k=1,\ldots,n$. 
When the true $F(\cdot)$ equals $F_0(\cdot)$, it is easy to analyze such a test's properties, and such is sufficient for GOF testing. 
However, if $F(x)=F_0(x)$ only over a proper subset of $\R$, then it is difficult to compute the false rejection probability: $X_{n:k}$ is random, so $F_0(X_{n:k})=F(X_{n:k})$ is true in some samples but not others. 
The quantile perspective avoids this difficulty: each pointwise null hypothesis concerns only one fixed population quantile value, $F^{-1}(\tau)$, which is tested with one order statistic. 

This quantile multiple testing perspective facilitates procedures to improve power. 
It leads naturally to a stepdown procedure in the spirit of \citet{Holm1979}, where if any quantile hypotheses are rejected by the initial test, then the remaining ones may be tested with a smaller critical value. 
Further, for one-sided testing, we propose a pre-test to determine at which quantiles the null hypothesis inequality constraint may be binding, and the pointwise test levels are recalibrated with attention restricted to this subset, similar to 
\citet{LintonEtAl2010}, among others. 

In the literature, using the probability integral transform for GOF testing dates back to \citet{Fisher1932}, \citet{Pearson1933}, and \citet{Neyman1937}. 
An extension especially relevant to us is that the joint distribution of $F(X_{n:1}),\ldots,F(X_{n:n})$ for order statistics $X_{n:1}<\cdots<X_{n:n}$ is the same as that of the order statistics from a $\UnifDist(0,1)$ distribution; \citet{ScheffeTukey1945} seem to be the first to note this \citep[e.g., as cited in][]{DavidNagaraja2003}. 
%
Using a closely related sampling distribution, nonparametric (empirical) likelihood-based GOF testing and uniform confidence bands are respectively proposed by \citet{BerkJones1979} and \citet{Owen1995}.  
However, they do not discuss multiple testing or two-sample inference, and our methods compute faster and spread sensitivity more evenly. 

For multiple testing concepts like FWER and stepdown procedures, see Chapter 9 in \citet{LehmannRomano2005text}, \citet{RomanoEtAl2010}, and references therein.

\Cref{sec:KS} contains results for the KS-based multiple testing procedure. 
\Cref{sec:1s,sec:2s} describe our new methods and their properties. 
\Cref{sec:emp,sec:sim} contain empirical examples and simulation results, respectively. 
\Cref{sec:app-meth} contains additional methods, 
\cref{sec:app-pfs} contains proofs, 
\cref{sec:app-comp} contains computational details, and 
\cref{sec:app-sim} contains additional simulations. 

Notationally, we use $\alpha$ for FWER and $\tilde\alpha$ for pointwise type I error rate. 
Acronyms and abbreviations used include those for confidence interval (CI), data generating process (DGP), empirical distribution function (EDF), familywise error rate (FWER), goodness-of-fit (GOF), Kolmogorov--Smirnov (KS), multiple testing procedure (MTP), and rejection probability (RP). 
%
Random and non-random vectors are respectively typeset as, e.g., $\vecf{X}$ and $\vecf{x}$, 
while random and non-random scalars are typeset as $X$ and $x$, 
and random and non-random matrices as $\matf{X}$ and $\matf{x}$; 
$\Ind{\cdot}$ is the indicator function. 
The Dirichlet distribution with parameters $a_1,\ldots,a_K$ is written $\textrm{Dir}(a_1,\ldots,a_K)$, 
the beta distribution $\BetaDist(a,b)$, and the uniform distribution $\UnifDist(a,b)$; in some cases these stand for random variables following such distributions. 
The $\alpha$-quantile of the $\BetaDist(k,n+1-k)$ distribution is denoted by $B_{k,n}^\alpha$.

\section{Setup}\label{sec:setup}

First, we define multiple testing terms following \citet[\S9.1]{LehmannRomano2005text}. 
\begin{definition}\label{def:FWER}
For a family of null hypotheses $H_{0h}$ indexed by $h$, let $I\equiv\{h:H_{0h}\textrm{ is true}\}$.  
The ``familywise error rate'' is 
\[ \FWER \equiv \Pr\left(\textrm{reject any $H_{0h}$ with $h\in I$}\right) . \] 
\end{definition}
\begin{definition}\label{def:FWER-control}
Given the notation in \cref{def:FWER}, ``weak control'' of FWER at level $\alpha$ requires $\FWER \le \alpha$ if each $H_{0h}$ is true.  
``Strong control'' of FWER requires $\FWER \le \alpha$ for any $I$.
\end{definition}
Given \cref{def:FWER-control}, when we establish strong control of FWER, then weak control is directly implied. 
In our results, we will establish strong control of FWER over a set of distributions, similar to establishing ``size control'' by showing that type I error rates are controlled over a set of distributions.\footnote{We do not allow the population distribution to drift asymptotically to consider ``uniformity,'' but we conjecture that at least our basic methods do not suffer such issues.} 

Second, we maintain the following assumptions throughout. 
\begin{assumption}\label{a:iid}
One-sample: scalar observations $X_i\stackrel{iid}{\sim}F$, and the sample size is $n$. 
Two-sample: scalar observations $X_i\stackrel{iid}{\sim}F_X$, $Y_i\stackrel{iid}{\sim}F_Y$, with respective sample sizes $n_X$ and $n_Y$, and the samples are independent of each other: $\{X_i\}_{i=1}^{n_X}\independent\{Y_k\}_{k=1}^{n_Y}$. 
\end{assumption}
\begin{assumption}\label{a:F}
One-sample: $F(\cdot)$ is continuous and strictly increasing over its support, taken to be $\R$. 
Two-sample: $F_X(\cdot)$ and $F_Y(\cdot)$ are continuous and strictly increasing over their common support, taken to be $\R$. 
\end{assumption}

\Cref{a:iid} is applicable in many cases (such as our empirical example), but excludes settings with sampling weights or dependence. 
As noted in \cref{sec:intro}, explorations of weakening this assumption through the connection with \citet{Banks1988} are in progress. 

\Cref{a:F} excludes discrete distributions; in such cases, our methods are conservative (like the KS test). 
The support is taken to be $\R$ for simplicity; any subset of $\R$ is fine. 
The continuity in \cref{a:F} allows the probability integral transform to be used, $F(X_i)\stackrel{iid}{\sim}\UnifDist(0,1)$. 
The strict monotonicity implies the CDF is invertible (without having to define the generalized inverse), so $F^{-1}(\cdot)$ is the quantile function, and $F^{-1}\bigl(F(r)\bigr)=r$ as well as $F\bigl(F^{-1}(\tau)\bigr)=\tau$. 

Third, we address the following tasks. 
Unlike with a GOF test, which has a single global hypothesis and corresponding single decision (reject or not), \cref{task:1s-test-FWER-2s,task:1s-test-FWER-1s,task:2s-test-FWER-CDF-2s,task:2s-test-FWER-CDF-1s} each involve a continuum of pointwise hypotheses that each require a decision. 
\begin{enumeratecomp}[\bfseries T{a}sk 1]
\item\label{task:1s-test-FWER-2s} One-sample, two-sided testing of 
$H_{0\tau} \colon F^{-1}(\tau) = F_0^{-1}(\tau)$ for $\tau\in(0,1)$ and fixed $F_0^{-1}(\cdot)$, with strong control of FWER.  
\item\label{task:1s-test-FWER-1s} Same as \cref{task:1s-test-FWER-2s} but with one-sided 
$H_{0\tau} \colon F^{-1}(\tau) \ge F_0^{-1}(\tau)$ or 
$H_{0\tau} \colon F^{-1}(\tau)\le F_0^{-1}(\tau)$. 
\item\label{task:2s-test-FWER-CDF-2s} Two-sample, two-sided testing of 
$H_{0r} \colon F_X(r)=F_Y(r)$ for $r\in\R$, 
with strong control of FWER. 
\item\label{task:2s-test-FWER-CDF-1s} Same as \cref{task:2s-test-FWER-CDF-2s} but with one-sided 
$H_{0r} \colon F_X(r)\le F_Y(r)$ or 
$H_{0r} \colon F_X(r)\ge F_Y(r)$. 
\setcounter{allenumi}{\theenumi}
\end{enumeratecomp}

Quantile and distribution (CDF) tests are equivalent in the one-sample setting, but not two-sample. 
\Cref{task:1s-test-FWER-2s,task:1s-test-FWER-1s} are equivalent to CDF tests: 
 given \cref{a:F}, if $F(r)>F_0(r)=\tau$, then $F^{-1}(\tau)<F_0^{-1}(\tau)$. 
In contrast, for \cref{task:2s-test-FWER-CDF-2s,task:2s-test-FWER-CDF-1s}, the null hypothesis does not determine the value of $F_X(r)$ or $F_Y(r)$. 
Consequently, a method with strong control of FWER for \cref{task:2s-test-FWER-CDF-2s,task:2s-test-FWER-CDF-1s} does not necessarily have strong control of FWER for the corresponding quantile hypotheses, although it does have weak control of FWER; \cref{sec:2s} contains details. 

Although different than the economic interpretation of quantile hypotheses, the interpretation of CDF hypotheses is still simple and meaningful. 
For example, imagine $F_Y(\cdot)$ is the distribution of hourly wage for individuals who did a job training program, and $F_X(\cdot)$ is the hourly wage distribution without the program, both in dollars per hour. 
If $H_{0r} \colon F_Y(r) \ge F_X(r)$ is rejected in favor of $F_Y(r)<F_X(r)$ for $r=15$, then the data suggest that the program increases the probability of an individual making at least \$15/hr. 

\section{KS-based multiple testing procedures}
\label{sec:KS}

The one-sample and two-sample KS GOF tests are well known, including the simulation of finite-sample exact $p$-values. 
We present the corresponding ``KS-based'' multiple testing procedures (MTPs) and establish their strong control of FWER. 
Although seemingly intuitive, we are unaware of such a presentation in the literature. 
Last in this section, we discuss the problem of uneven sensitivity. 

For the one-sample, two-sided KS-based MTP, given notation in \cref{a:iid,a:F}, let 
\begin{equation}\label{eqn:def-EDF-Dx0}
\hat{F}(x) \equiv \sum_{i=1}^{n} (1/n)\Ind{X_i\le x} , \quad
D_n^{x,0} \equiv \sqrt{n}\bigl\lvert\hat{F}(x)-F_0(x)\bigr\rvert , 
\end{equation}
for all $x\in\R$. 
Let $c_n(\alpha)$ denote the exact critical value with sample size $n$, so 
\begin{equation}\label{eqn:def-c-1s}
\Pr\bigl( D_n > c_n(\alpha) \bigr) = \alpha , \quad
D_n \equiv \sup_{x\in\R} D_n^x , \quad
D_n^x \equiv \sqrt{n}\bigl\lvert \hat{F}(x)-F(x) \bigr\rvert . 
\end{equation}
It is well known that $c_n(\alpha)$ is distribution-free, depending only on $n$ and $\alpha$. 
Alternatively, the asymptotic $c_\infty(\alpha)$ can be used, such that 
\[ \Pr\biggl(\sup_{t\in[0,1]}\lvert B(t)\rvert > c_\infty(\alpha) \biggr) = \alpha \]
for standard Brownian bridge $B(\cdot)$. 

The KS test proper is a GOF test that rejects 
$H_0 \colon F(\cdot)=F_0(\cdot)$ 
when 
$\sup_{x\in\R}D_n^{x,0} > c_n(\alpha)$. 
Under $H_0$, this occurs with probability $\alpha$. 

The corresponding MTP addressing \cref{task:1s-test-FWER-2s} is intuitive: reject $H_{0x} \colon F(x) = F_0(x)$ for any $x\in\R$ such that $D_n^{x,0}>c_n(\alpha)$. 
(To directly address \cref{task:1s-test-FWER-2s}: if $H_{0x}$ is rejected, then $H_{0\tau}$ is rejected for $\tau=F_0(x)$.) 
Weak control of FWER is immediate from the GOF test's size control: when $F(\cdot)=F_0(\cdot)$, the probability of at least one pointwise rejection is equivalent to the probability of $\sup_{x\in\R}D_n^{x,0}>c_n(\alpha)$, which is exactly $\alpha$. 
Strong control of FWER is also straightforward to establish. 

\begin{proposition}\label{prop:KS-1s-FWER}
Let \cref{a:iid,a:F} hold, as well as the definitions in \cref{eqn:def-EDF-Dx0,eqn:def-c-1s}. 
The two-sided exact (or asymptotic) KS-based MTP that rejects $H_{0x} \colon F(x) = F_0(x)$ for any $x\in\R$ where $D_n^{x,0}$ exceeds the critical value has strong control of exact (or asymptotic) FWER. 
The corresponding one-sided KS-based MTPs of $H_{0x} \colon F(x)\le F_0(x)$ or $H_{0x} \colon F(x) \ge F_0(x)$ also have strong control of FWER. 
\end{proposition}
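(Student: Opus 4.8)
The plan is to reduce the familywise error rate directly to the defining property of the KS critical value, using the observation that at any index where the null holds, the deviation of $\hat{F}$ from $F_0$ equals its deviation from the true $F$. Fix any $F$ satisfying \cref{a:iid,a:F}, and let $I\equiv\{x\in\R : F(x)=F_0(x)\}$ be the set of $x$ for which $H_{0x}$ is true. For every $x\in I$ we have $F_0(x)=F(x)$, so by the definitions in \cref{eqn:def-EDF-Dx0,eqn:def-c-1s},
\[ D_n^{x,0}=\sqrt{n}\bigl\lvert\hat{F}(x)-F_0(x)\bigr\rvert=\sqrt{n}\bigl\lvert\hat{F}(x)-F(x)\bigr\rvert=D_n^x . \]

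Given this identity, the event that the MTP falsely rejects at least one true hypothesis satisfies
\[ \bigl\{\exists\,x\in I : D_n^{x,0}>c_n(\alpha)\bigr\}=\bigl\{\sup_{x\in I}D_n^x>c_n(\alpha)\bigr\}\subseteq\bigl\{\sup_{x\in\R}D_n^x>c_n(\alpha)\bigr\}=\bigl\{D_n>c_n(\alpha)\bigr\} , \]
so $\FWER\le\Pr\bigl(D_n>c_n(\alpha)\bigr)=\alpha$ by the definition of $c_n(\alpha)$. Since this holds for every admissible $F$, and hence for every possible set $I$, strong control of exact FWER follows. For the asymptotic version, replace $c_n(\alpha)$ by $c_\infty(\alpha)$: Donsker's theorem gives $D_n\dconv\sup_{t\in[0,1]}\lvert B(t)\rvert$, a limit with continuous CDF, so that $\limsup_n\Pr\bigl(D_n>c_\infty(\alpha)\bigr)=\alpha$, and the same inclusion yields $\limsup_n\FWER\le\alpha$.

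For the one-sided MTPs the argument is structurally identical, with the equality above weakened to a favorable inequality. Take $H_{0x}\colon F(x)\le F_0(x)$, rejected when $\sqrt{n}\bigl(\hat{F}(x)-F_0(x)\bigr)$ exceeds the one-sided KS critical value $c_n^{+}(\alpha)$, defined by $\Pr\bigl(\sup_{x\in\R}\sqrt{n}(\hat{F}(x)-F(x))>c_n^{+}(\alpha)\bigr)=\alpha$. On the true-null set $I^{+}\equiv\{x:F(x)\le F_0(x)\}$ we have $F_0(x)\ge F(x)$, hence $\sqrt{n}(\hat{F}(x)-F_0(x))\le\sqrt{n}(\hat{F}(x)-F(x))\le\sup_{x\in\R}\sqrt{n}(\hat{F}(x)-F(x))$, and the same chain of inclusions bounds the FWER by $\alpha$; the asymptotic case uses the one-sided Donsker limit $\sup_{t\in[0,1]}B(t)$. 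The mirror-image case $H_{0x}\colon F(x)\ge F_0(x)$ follows by replacing $\hat{F}(x)-F_0(x)$ with $F_0(x)-\hat{F}(x)$ throughout.

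There is no genuine obstacle here: the entire content is the one-line identity $D_n^{x,0}=D_n^x$ on $I$, after which the bound is monotonicity of the supremum together with the definition of the critical value. The only points meriting a word of care are measurability of the relevant suprema over $I$ (routine, since for each realized sample $x\mapsto D_n^x$ is piecewise constant with finitely many jumps) and, in the asymptotic case, continuity of the Brownian-bridge supremum distribution at the quantile $c_\infty(\alpha)$ (standard). Notably, the argument never needs to describe the joint behavior of $\hat{F}$ at points inside versus outside $I$ --- which is precisely the difficulty the paper later flags for the two-sample quantile problem, and is why the present proof is so short.
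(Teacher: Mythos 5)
Your proof is correct and follows essentially the same route as the paper's: the pointwise identity $D_n^{x,0}=D_n^x$ on the true-null set $I$ (weakened to an inequality in the one-sided cases), monotonicity of the supremum over $I\subseteq\R$, and the defining property of the exact (or asymptotic) critical value. The only cosmetic difference is that you spell out the Donsker/continuity argument for the asymptotic case and remark on measurability, which the paper leaves implicit.
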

\begin{proof}
As in \cref{def:FWER}, let $I\equiv\{x : H_{0x}\textrm{ is true}\}\subseteq\R$. 
For the two-sided case, using \cref{eqn:def-EDF-Dx0,eqn:def-c-1s},
\[ \FWER
  \equiv \Pr\left(\sup_{x\in I}D_n^{x,0} > c_n(\alpha) \right) 
  =   \Pr\left(\sup_{x\in I}D_n^{x} > c_n(\alpha) \right) 
  \le \overbrace{\Pr\left( \sup_{x\in\R}D_n^{x} > c_n(\alpha) \right)
  = \alpha}^{\textrm{by \cref{eqn:def-c-1s}}} . \]
The one-sided case is similar and shown in the appendix. 
\end{proof}

In the two-sample case, let 
\begin{gather}\label{eqn:def-EDF-2s}
\hat{F}_X(r) \equiv \sum_{i=1}^{n_X} (1/n_X)\Ind{X_i\le r} , \quad
\hat{F}_Y(r) \equiv \sum_{i=1}^{n_Y} (1/n_Y)\Ind{Y_i\le r} , \\
\label{eqn:def-Dr-2s}
D_{n_X,n_Y}^r \equiv \sqrt{\frac{n_X n_Y}{n_X+n_Y}}\bigl\lvert\hat{F}_X(r)-\hat{F}_Y(r)\bigr\rvert , 
\end{gather}
for all $r\in\R$. 
Under $H_0 \colon F_X(\cdot) = F_Y(\cdot)$, the critical value $c_{n_X,n_Y}(\alpha)$ is again distribution-free, and it converges to the same $c_\infty(\alpha)$ as for the one-sample test as $n_X,n_Y\to\infty$ at the same rate. 
However, in finite samples, 
we only have an inequality: under $H_0$, 
\begin{equation}\label{eqn:def-c-2s}
\Pr\bigl( D_{n_X,n_Y} > c_n(\alpha) \bigr) \le \alpha , \quad
D_{n_X,n_Y} \equiv \sup_{r\in\R} D_{n_X,n_Y}^r . 
\end{equation}
Equality is impossible for most $\alpha$ because the distribution of $D_{n_X,n_Y}$ is discrete: it depends only on the ordering (i.e., permutation) of the $X_i$ and $Y_i$, and with finite $n_X$ and $n_Y$ the number of such orderings is finite. 

The corresponding two-sample MTP addressing \cref{task:2s-test-FWER-CDF-2s} is intuitive: reject $H_{0r} \colon F_X(r) = F_Y(r)$ for any $r\in\R$ such that $D_{n_X,n_Y}^r>c_{n_X,n_Y}(\alpha)$. 
Weak control of FWER is again immediate from the GOF test's size control. 
Strong control of FWER can also be established, as in \cref{prop:KS-2s-FWER}.%
\footnote{Asymptotically, and usually not framed in terms of FWER, stronger results in more complex models exist, such as the nonparametric, uniform (over $\tau$) confidence band for the difference of two conditional quantile processes in \citet[\S6.2]{QuYoon2015}, or the ``uniform inference'' on the quantile treatment effect process in \citet[\S4]{FirpoGalvao2015}.} 
The key is that, given $\alpha$, $n_X$, and $n_Y$, rejection of $H_{0r}$ depends only on $\hat{F}_X(r)$ and $\hat{F}_Y(r)$, whose distributions are independent (by \cref{a:iid,eqn:def-EDF-2s}) and depend only on $F_X(r)$ and $F_Y(r)$.  
Such a property extends over multiple $r$ values jointly, too.  
This allows us to link the FWER with a probability under $F_X(\cdot)=F_Y(\cdot)$, which is bounded by the size of the global GOF test. 
(Implicitly, this was actually the one-sample argument for \cref{prop:KS-1s-FWER}, too.) 
Since this general proof structure is used later, part of the argument is given in the following lemma. 

\begin{lemma}\label{lem:weak-to-strong}
Let \Cref{a:iid,a:F} hold. 
Consider any MTP for \cref{task:2s-test-FWER-CDF-2s} or \cref{task:2s-test-FWER-CDF-1s}. 
Assume it has weak control of FWER at level $\alpha$. 
Assume that, given $\alpha$, $n_X$, and $n_Y$, rejection of $H_{0r}$ depends only on $\hat{F}_X(r)$ and $\hat{F}_Y(r)$, for any $r\in\R$. 
Then, the MTP has strong control of FWER at level $\alpha$. 
\end{lemma}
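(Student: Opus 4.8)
The plan is to reduce the strong-control claim to the weak-control hypothesis by a coupling/reduction argument: the point is that the joint rejection behavior of the MTP on the true-null index set $I$ depends on the data only through $\bigl(\hat F_X(r),\hat F_Y(r)\bigr)_{r\in I}$, and the law of that collection of random functions restricted to $r\in I$ is the same under $(F_X,F_Y)$ as under a suitably chosen pair of identical CDFs. Once that equality-in-law is in place, the FWER under $(F_X,F_Y)$ equals the probability of at least one false rejection in a world where all hypotheses in $I$ are true and the MTP is applied to a matched data set; but in such a world weak control bounds that probability by $\alpha$, which is exactly what we need. So the real work is constructing the matched ``null'' configuration and checking that the relevant finite-dimensional (indeed, process-level over $I$) distributions coincide.

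First I would fix $I\equiv\{r:H_{0r}\text{ is true}\}$ and note, following \cref{def:FWER}, that $\FWER=\Pr(\text{MTP rejects some }H_{0r},\ r\in I)$. By the hypothesis of the lemma, the event ``MTP rejects $H_{0r}$'' is a measurable function of $\bigl(\hat F_X(r),\hat F_Y(r)\bigr)$ for each $r$, so the event ``MTP rejects some $H_{0r}$ with $r\in I$'' is a measurable function of the pair of restricted empirical processes $\bigl\{(\hat F_X(r),\hat F_Y(r)):r\in I\bigr\}$. Hence it suffices to exhibit a pair of distributions $(\tilde F_X,\tilde F_Y)$ with $\tilde F_X(\cdot)=\tilde F_Y(\cdot)$ (so that every $H_{0r}$ is true and weak control applies) such that the law of $\bigl\{(\hat F_X(r),\hat F_Y(r)):r\in I\bigr\}$ is unchanged when the data are drawn from $(\tilde F_X,\tilde F_Y)$ instead of $(F_X,F_Y)$.

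Second I would construct that pair. For each of Tasks~\ref{task:2s-test-FWER-CDF-2s} and~\ref{task:2s-test-FWER-CDF-1s}, the relevant structural fact is: on $I$ we have either $F_X(r)=F_Y(r)$ (two-sided case) or an inequality that, combined with the monotone construction below, can be matched. Let $G$ be any continuous strictly increasing CDF on $\R$, and consider data $\tilde X_i=G^{-1}(F_X(X_i'))$-type transformations --- more precisely, since $\hat F_X(r)$ counts $\Ind{X_i\le r}$ and $\Pr(X_i\le r)=F_X(r)$, the finite-dimensional law of $\bigl(\hat F_X(r)\bigr)_{r\in I}$ depends on $(F_X,X)$ only through the function $r\mapsto F_X(r)$ on $I$, by \cref{a:iid}; likewise for $Y$, with the two samples independent by \cref{a:iid}. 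So I need $\tilde F_X$ and $\tilde F_Y$ that are continuous, strictly increasing, equal to each other, and agree with $F_X$ and $F_Y$ respectively on $I$. For the two-sided case $F_X=F_Y$ on $I$, so one may take $\tilde F_X=\tilde F_Y$ to be any continuous strictly increasing CDF interpolating the common values on $I$ --- e.g.\ extend $F_X|_I$ monotonically and continuously to all of $\R$; \cref{a:F} guarantees such an extension exists. For the one-sided case (say $H_{0r}\colon F_X(r)\le F_Y(r)$), $I$ is where $F_X(r)\le F_Y(r)$; here strong control must still follow because the MTP, having weak control, tolerates the least favorable configuration, and the rejection probability of the MTP on $I$ is monotone in the appropriate direction --- but to stay within the stated hypotheses I would instead invoke the same matching idea restricted to $I$ together with the observation that the weak-control guarantee is assumed at the boundary $F_X=F_Y$, which dominates. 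I would write this carefully, pinning down that the finite-dimensional distributions of $\bigl\{(\hat F_X(r),\hat F_Y(r)):r\in I\bigr\}$ under $(\tilde F_X,\tilde F_Y)$ and under $(F_X,F_Y)$ coincide because they are determined by $F_X|_I$ and $F_Y|_I$ alone, and extend to the process level on $I$ by the usual monotone-paths argument.

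Third, I would close the loop: applying weak control to $(\tilde F_X,\tilde F_Y)$ (where all $H_{0r}$, $r\in\R$, are true, so in particular all $H_{0r}$, $r\in I$, are true) gives $\Pr_{(\tilde F_X,\tilde F_Y)}(\text{reject some }H_{0r},\,r\in I)\le\Pr_{(\tilde F_X,\tilde F_Y)}(\text{reject some }H_{0r},\,r\in\R)\le\alpha$, and by the equality in law of the restricted empirical processes the left-hand probability equals $\FWER$ under $(F_X,F_Y)$. Hence $\FWER\le\alpha$ for arbitrary $I$, which is strong control by \cref{def:FWER-control}.

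The main obstacle I anticipate is the one-sided case: there $I=\{r:F_X(r)\le F_Y(r)\}$ need not be where $F_X=F_Y$, so a naive ``replace by equal CDFs'' move changes $F_X|_I$ or $F_Y|_I$ and hence the law of the empirical processes on $I$. The fix is to keep $F_X|_I$ and $F_Y|_I$ exactly as they are (they already satisfy the inequality on $I$) and only modify the CDFs off $I$ so that the global inequality holds everywhere --- but one must check that ``weak control at level $\alpha$'' as assumed in the lemma is understood to cover every configuration in which \emph{all} $H_{0r}$ hold, including those with $F_X\neq F_Y$ off a null set, and that a monotonicity/least-favorable argument lets the boundary configuration dominate; getting this bookkeeping exactly right, and making sure the extension of $F_X|_I,F_Y|_I$ to continuous strictly increasing CDFs on $\R$ respecting the inequality is always possible, is where the care is needed. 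Everything else is the routine observation that empirical-process laws under \cref{a:iid} depend on the CDF only through its restriction to the index set in question.
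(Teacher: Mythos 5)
Your proposal is correct and follows essentially the same route as the paper: fix the true-null set $I$, note that the joint law of $\{(\hat F_X(r),\hat F_Y(r))\}_{r\in I}$ depends only on $n_X$, $n_Y$, and $\{(F_X(r),F_Y(r))\}_{r\in I}$, swap in an all-null configuration that agrees with $(F_X,F_Y)$ on $I$, and invoke weak control. The one-sided hedges you flag are resolved exactly by the fix you yourself suggest: the paper keeps $F_Y$ and replaces $F_X$ by $G_X$ with $G_X(r)=F_X(r)$ for $r\in I$ and $G_X(r)=F_Y(r)$ otherwise --- i.e.\ $G_X=\min\{F_X,F_Y\}$, which is automatically a continuous, strictly increasing CDF satisfying $G_X\le F_Y$ everywhere --- and weak control in the sense of \cref{def:FWER-control} applies to \emph{any} configuration in which every $H_{0r}$ is true (not only $F_X=F_Y$), so no separate least-favorable or monotonicity argument is needed.
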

\begin{proposition}\label{prop:KS-2s-FWER}
Let \cref{a:iid,a:F} hold, as well as the definitions in \cref{eqn:def-EDF-2s,eqn:def-Dr-2s,eqn:def-c-2s}. 
The two-sided exact (or asymptotic) KS-based MTP that rejects $H_{0r} \colon F_X(r) = F_Y(r)$ for any $r\in\R$ where $D_{n_X,n_Y}^r$ exceeds the critical value has strong control of exact (or asymptotic) FWER. 
The corresponding one-sided KS-based MTPs of $H_{0r} \colon F_X(r) \le F_Y(r)$ or $H_{0r} \colon F_X(r) \ge F_Y(r)$ also have strong control of FWER. 
\end{proposition}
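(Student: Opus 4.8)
The plan is to derive the two-sided case directly and then use \cref{lem:weak-to-strong} to get the one-sided cases for free. For the two-sided case, I would proceed exactly as in the proof of \cref{prop:KS-1s-FWER}, but being careful about the fact that in the two-sample setting only the inequality \cref{eqn:def-c-2s} is available. Write $I \equiv \{r : H_{0r}\text{ is true}\} = \{r : F_X(r) = F_Y(r)\}$. Then
\[
\FWER
 \equiv \Pr\left( \sup_{r\in I} D_{n_X,n_Y}^r > c_{n_X,n_Y}(\alpha) \right)
 = \Pr\left( \sup_{r\in I} \sqrt{\tfrac{n_X n_Y}{n_X+n_Y}}\bigl\lvert \hat F_X(r) - \hat F_Y(r)\bigr\rvert > c_{n_X,n_Y}(\alpha) \right).
\]
The first key step is to note that for $r\in I$ we have $F_X(r) = F_Y(r)$, so the distribution of $\hat F_X(r) - \hat F_Y(r)$ restricted to $r\in I$ is \emph{identical} to what it would be under the global null $F_X(\cdot) = F_Y(\cdot)$; by \cref{a:iid} and \cref{eqn:def-EDF-2s} the joint law of $\{(\hat F_X(r),\hat F_Y(r)) : r\in I\}$ depends only on $\{F_X(r),F_Y(r) : r\in I\}$, hence only on $\{F_X(r) : r\in I\}$, which is unchanged if we shrink both CDFs to a common distribution agreeing with $F_X$ on $I$. (Since $I$ is the equality set of two continuous increasing functions it is a closed set, but no regularity of $I$ is actually needed for this argument — only that the pointwise equality holds on it.) Therefore $\sup_{r\in I} D_{n_X,n_Y}^r \le \sup_{r\in\R} D_{n_X,n_Y}^r = D_{n_X,n_Y}$ both in the literal sense (the sup over a subset is no larger) and in distribution when we further pass to a common CDF; combining,
\[
\FWER \le \Pr\bigl( D_{n_X,n_Y} > c_{n_X,n_Y}(\alpha)\bigr) \le \alpha,
\]
where the last inequality is \cref{eqn:def-c-2s}. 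The asymptotic version is the same with $c_\infty(\alpha)$ in place of $c_{n_X,n_Y}(\alpha)$, using that $c_{n_X,n_Y}(\alpha)\to c_\infty(\alpha)$.

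For the one-sided cases, rejection of $H_{0r}$ in the KS-based MTP depends only on $\hat F_X(r)$ and $\hat F_Y(r)$ (through the signed version of $D_{n_X,n_Y}^r$ compared to the appropriate one-sided critical value), so the hypothesis of \cref{lem:weak-to-strong} is satisfied; weak control of FWER follows from the size control of the one-sided KS GOF test, which fits in the same framework with a one-sided critical value, and \cref{lem:weak-to-strong} then upgrades this to strong control. Alternatively one can repeat the displayed argument above with the signed statistic, noting that on the relevant part of $I$ (where $F_X(r)\le F_Y(r)$, say) the rejection probability is bounded by the corresponding probability under $F_X=F_Y$.

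The main obstacle is being precise about the step ``the law of the process restricted to $I$ coincides with its law under the global null.'' The subtlety is that $\hat F_X(r)$ for $r\in I$ and $\hat F_X(r')$ for $r'\notin I$ are built from the same sample, so one cannot literally replace $F_X$ by a common CDF without affecting the off-$I$ coordinates; the correct statement is only about the finite-dimensional (and then sup-over-$I$) distributions of the on-$I$ coordinates, which are determined by $\{F_X(r) : r\in I\}$ alone. Making this rigorous is a matter of applying the probability integral transform coordinate-wise and invoking that the joint distribution of $(\hat F_X(r))_{r\in I}$ is a function of $(F_X(r))_{r\in I}$ only — exactly the structural property \cref{lem:weak-to-strong} isolates — so I would lean on that lemma rather than re-deriving it, keeping the two-sided argument as the short direct computation above and citing the lemma for the one-sided conclusions.
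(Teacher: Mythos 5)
Your proposal is correct and takes essentially the same route as the paper: the paper's proof simply observes that rejection of $H_{0r}$ depends only on $\hat{F}_X(r)-\hat{F}_Y(r)$ and that the two-sample KS GOF test's size control gives weak FWER control, then invokes \cref{lem:weak-to-strong} for both the two-sided and one-sided cases. Your direct two-sided computation just unrolls inline the lemma's common-CDF substitution argument (the $G_X$ construction), and you correctly identify and resolve the same subtlety that the lemma isolates before leaning on it for the one-sided conclusions.
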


Although strong control of FWER is helpful, the KS-based MTPs suffer from uneven sensitivity to deviations from the null. 
One symptom of this was seen in the example in \cref{sec:intro}, where the one-sample KS test could not reject that the population was $\UnifDist(0,1)$ even with five out of $n=20$ observations exceeding one million. 
More generally, the one-sample, two-sided KS test does not reject at a $10\%$ level even if $F_0(X_{20:16})=1$ or if $F_0(X_{20:5})=0$, which any reasonable test should 
and which our test does. 
This uneven sensitivity results in ``low power in the tails'' and is well documented in the literature. 
For example, \citet{Eicker1979} says that the KS is ``sensitive asymptotically only in the central range given by $\{\tau:(\log\log n)^{-1} < F(\tau) < 1-(\log\log n)^{-1}\}$'' (p.\ 117). 

\begin{figure}[htbp]
\centering
\includegraphics[width=0.6\textwidth,clip=true,trim=20 10 20 0]{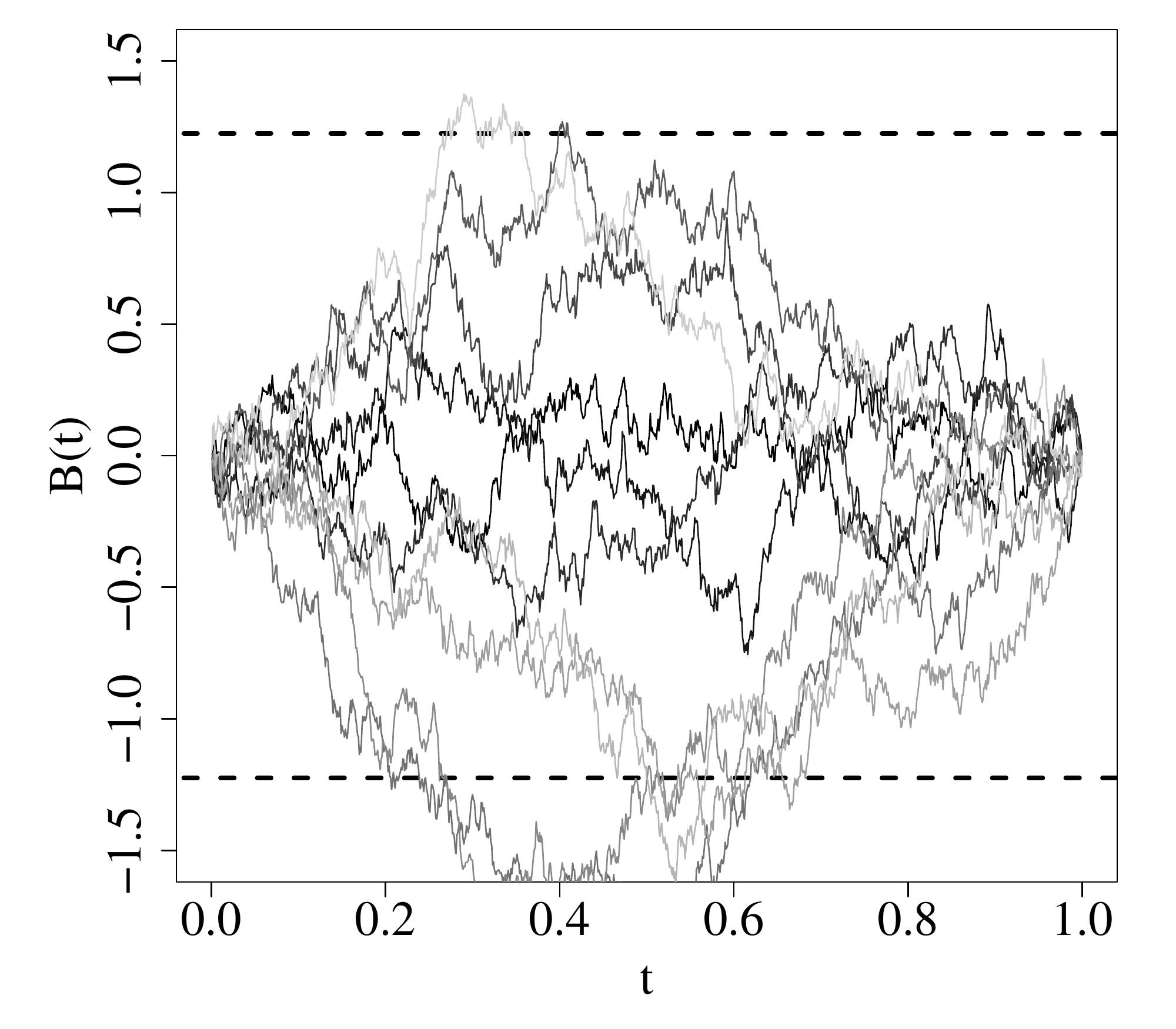}
\caption{\label{fig:Bt-paths}Sample paths of standard Brownian bridge $B(\cdot)$, with asymptotic $10\%$ two-sided KS critical value $\pm1.225$.  Six paths are (randomly) chosen among those exceeding the threshold; four are chosen that do not.}
\end{figure}

\Cref{fig:Bt-paths} visualizes the intuition for the KS test's low sensitivity in the tails. 
The figure shows sample paths (realizations) of a standard Brownian bridge along with the two-sided asymptotic $10\%$ KS critical value, $1.225$. 
That is, $\Pr\bigl(\sup_{t\in[0,1]}\lvert B(t)\rvert > 1.225 \bigr)=0.1$, so only $10\%$ of sample paths wander above $1.225$ or below $-1.225$, leading to (asymptotic) weak control of FWER. 
(The figure oversamples paths exceeding the critical value to avoid a visual mess.) 
However, as can be seen, such excesses are much more likely to occur near the median ($t=0.5$) than in the tails. 
In the figure, the six paths deviating beyond the critical value do so only in $t\in[0.25,0.75]$, and the clumping of sample paths near $B(t)=0$ for $t$ near zero or one shows the difficulty of having a large deviation in the tails. 
This is due to the pointwise variance being highest at $t=0.5$, with $\Var\bigl(B(0.5)\bigr)=t(1-t)=0.25$, and lowest as $t\to0$ or $t\to1$, where the variance approaches zero. 

\begin{figure}[htbp]
\centering
\hfill
\includegraphics[width=0.49\textwidth,clip=true,trim=0 10 0 80]{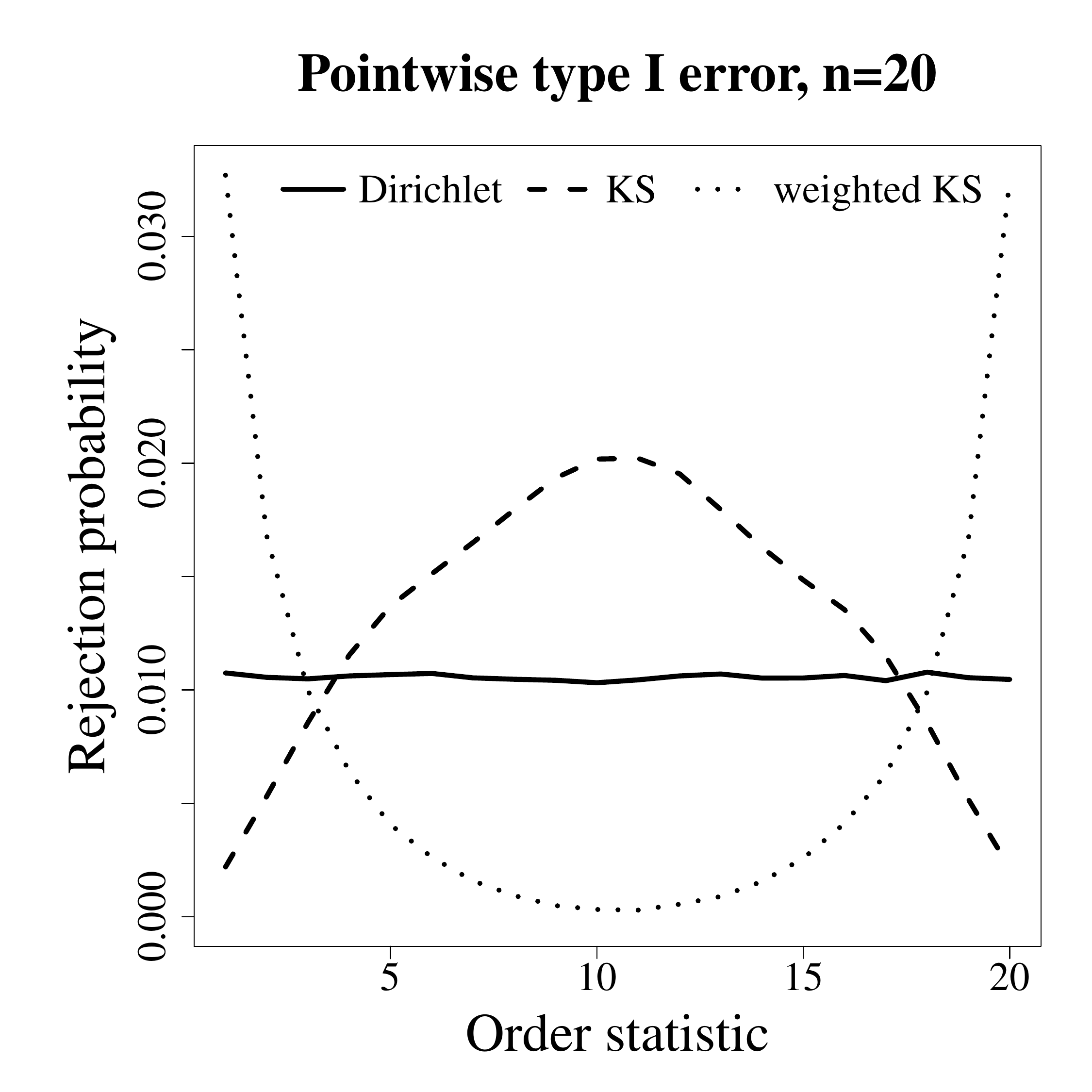}
\hfill
\includegraphics[width=0.49\textwidth,clip=true,trim=0 10 0 80]{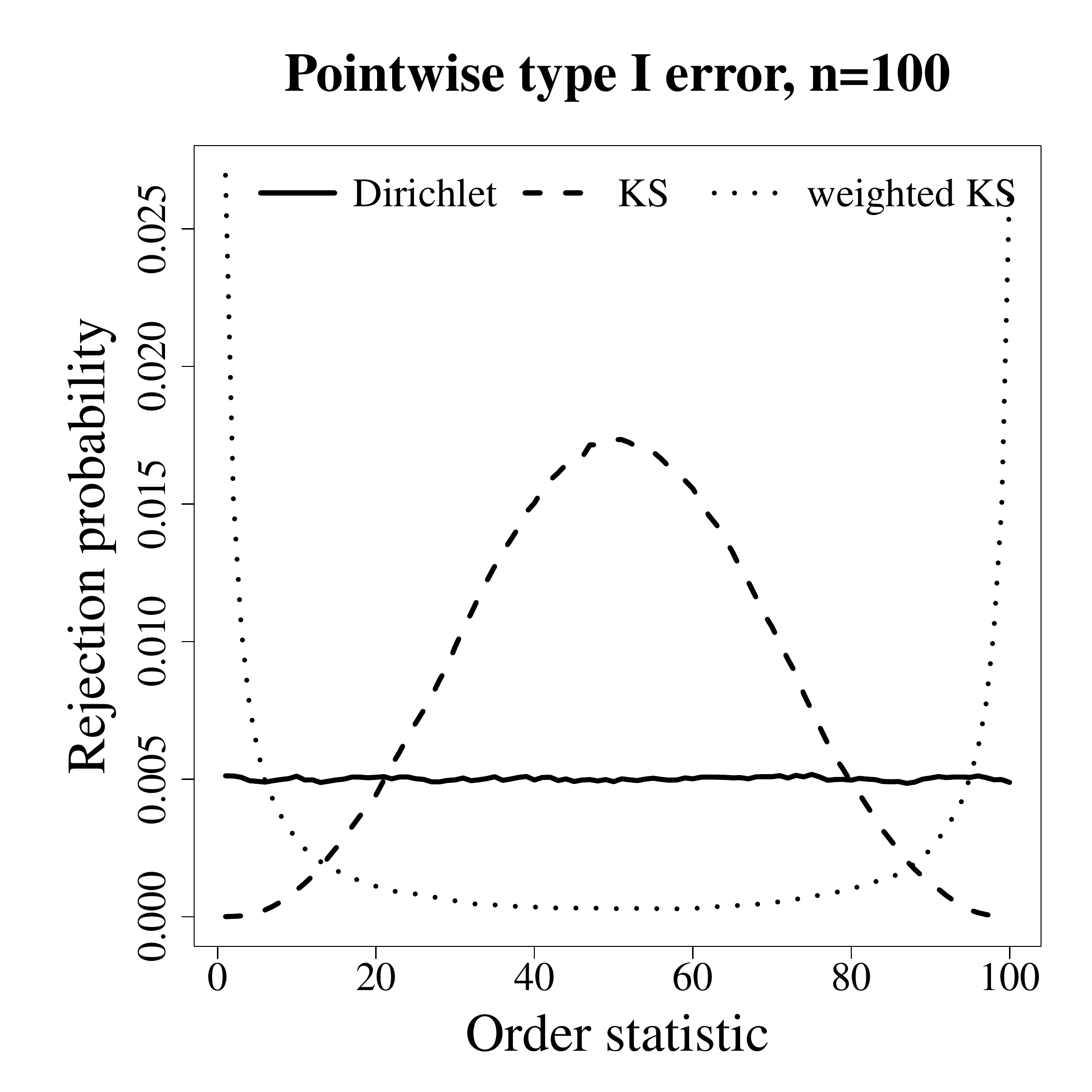}
\hfill\null
\caption{\label{fig:sim-1s-pt}Simulated ``pointwise'' RP ($\tilde\alpha$) at each order statistic, $H_0 \colon F(\cdot) = F_0(\cdot)$, $n=20$ (left) and $n=100$ (right), $10^6$ replications. Overall FWER for all MTPs is exactly $\alpha=0.1$.}
\end{figure}

\Cref{fig:sim-1s-pt} visualizes (un)even sensitivity from a different perspective. 
The KS-based MTP is labeled ``KS'' in the legend; the ``weighted KS'' is a weighted version described below. 
``Dirichlet'' is our new MTP, detailed in \cref{sec:1s}. 
FWER is exactly $10\%$ for all methods shown, but sensitivity is allocated differently across the distribution. 
The probability of $X_{n:k}$ causing some $H_{0r} \colon F(r) = F_0(r)$ to be rejected is simulated\footnote{The simulation uses a standard uniform distribution for $F(\cdot)$, but the results are distribution-free: one could simply transform the data by $F_0(\cdot)$ and test against a standard uniform.} 
for $k=1,\ldots,n$. 
The resulting pattern is a more systematic account of the intuition in \cref{fig:Bt-paths}: the pointwise rejection probability (RP) due to central order statistics is much higher than the RP due to extreme order statistics, and RP goes to almost zero at the sample minimum and maximum. 
This pattern is already clear with $n=20$ (left) and becomes more exaggerated with $n=100$ (right). 
Practically, this shows that although one is technically testing across the entire distribution, the KS-based MTP (implicitly) weights the middle of the distribution much more than the tails, which may not be desired. 
The corresponding uneven allocation of KS pointwise power is illustrated in \cref{sec:sim-pt-pwr}. 

\begin{figure}[htbp]
\centering
\hfill
\includegraphics[width=0.49\textwidth,clip=true,trim=0 10 0 80]{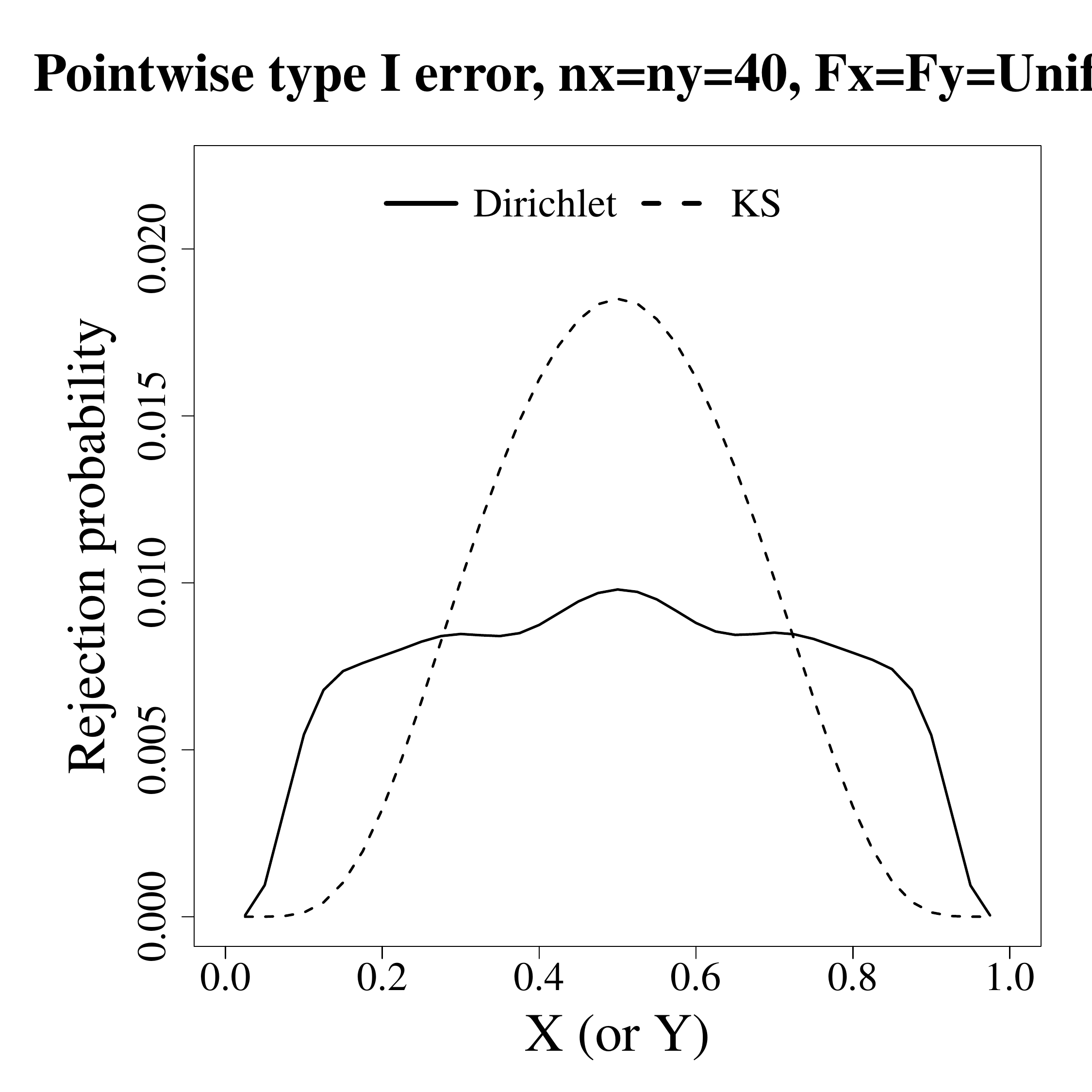}
\hfill
\includegraphics[width=0.49\textwidth,clip=true,trim=0 10 0 80]{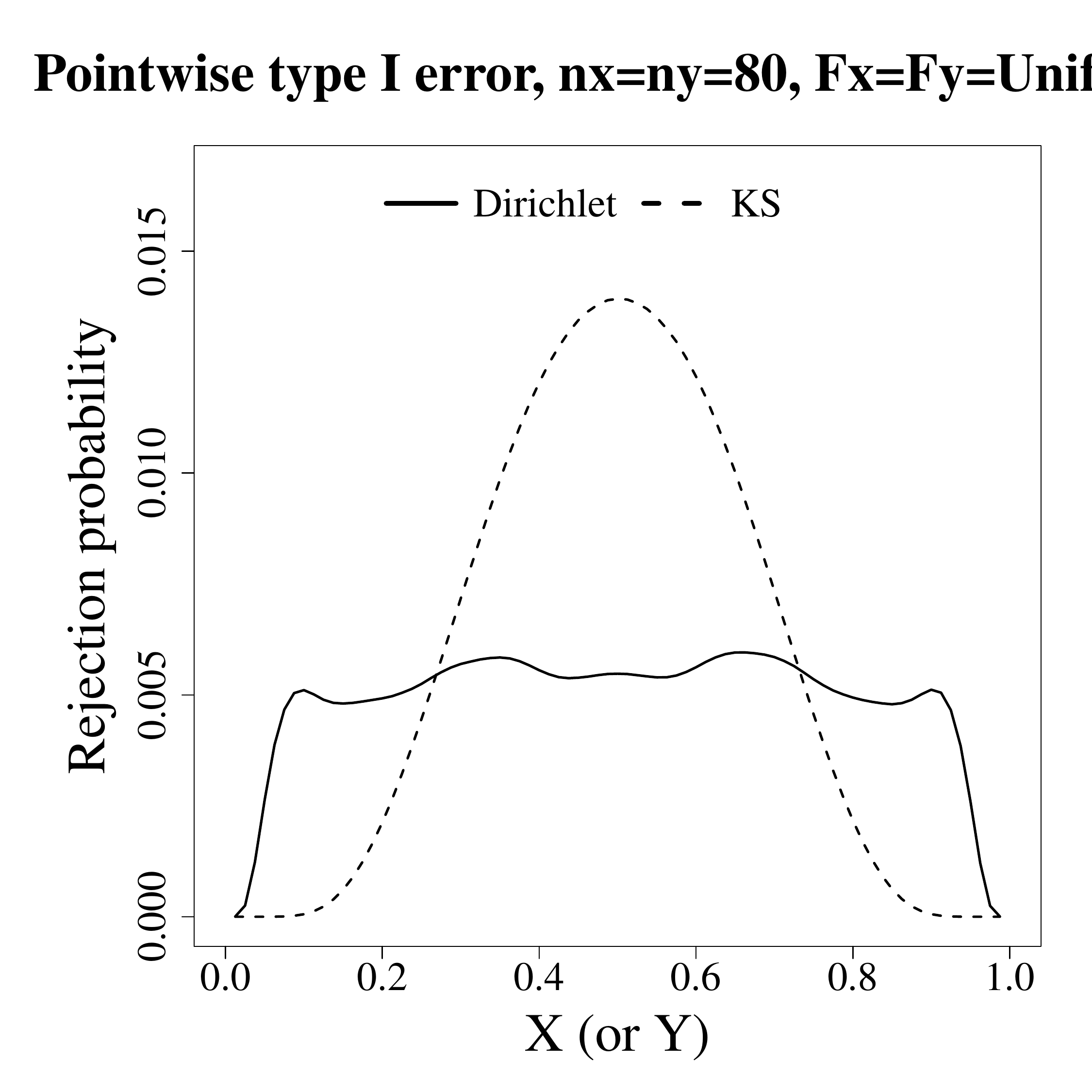}
\hfill\null
\caption{\label{fig:sim-2s-pt}Simulated pointwise RP ($\tilde\alpha$), $n_X=n_Y=40$ (left) and $n_X=n_Y=80$ (right), FWER for both MTPs is exactly $\alpha=0.1$, $10^6$ replications, $F_X=F_Y=\UnifDist(0,1)$.}
\end{figure}

\Cref{fig:sim-2s-pt} shows the same qualitative pattern for the two-sample KS-based MTP. 
The horizontal axis now just shows the value $r\in[0,1]$ for which $H_{0r} \colon F_X(r) = F_Y(r)$ is rejected. 
As in \cref{fig:sim-1s-pt}, the pointwise RP peaks near the median and goes to zero in the tails. 

If this uneven sensitivity stems from \cref{fig:Bt-paths} having uneven pointwise variance $t(1-t)$, then the simple solution is to divide by the standard deviation $\sqrt{t(1-t)}$ to achieve equal, unit variance at each $t$. 
In the one-sample KS context, this means dividing by $\sqrt{F_0(t)[1-F_0(t)]}$. 
\citet{AndersonDarling1952} gave exactly this solution in their Example 2 (p.\ 202), where they note it was first suggested by L.\ J.\ Savage (Footnote 2); they say, ``In a certain sense, this function assigns to each point of the distribution $F(x)$ equal weights'' (pp.\ 202--203).\footnote{The same paper includes a weighted Cram\'{e}r--von Mises test that is most commonly called the Anderson--Darling test.} 
However, they note that their results require the tails to have zero weight (pp.\ 210--211), which undermines the goal of even sensitivity. 
If nonetheless the weight is applied even in the tails, then the tails become overly sensitive. 
This is characterized by \citet[p.\ 117]{Eicker1979} as the weighted KS being ``sensitive only in the moderate tails given by, e.g., $\bigl\{\tau : n^{-1}\log n < F(\tau) < \bigl((\log\log n)\log\log\log n\bigr)^{-1}\bigr\}$.'' 
Other discussions of the unintended (bad) consequences of this weighting scheme are found in \citet{Jaeschke1979} and \citet{Lockhart1991}, among others. 
For the corresponding MTP, the ``weighted KS'' line in \cref{fig:sim-1s-pt} shows that, indeed, the pointwise RP is much higher in the tails than near the median. 

\Cref{fig:sim-1s-pt,fig:sim-2s-pt} each show a line labeled ``Dirichlet'' that achieves a great degree of even sensitivity. 
These are the basic MTPs we propose in \cref{sec:1s,sec:2s}.

\section{One-sample Dirichlet approach}
\label{sec:1s}

We propose methods for multiple testing of quantiles based on the probability integral transform and Dirichlet distribution, including stepdown and pre-test procedures to improve power. 
The Dirichlet distribution is used for GOF testing and uniform confidence bands in \citet{BujaRolke2006}, but our methods, their properties, and the quantile multiple testing framework itself are novel. 

The Dirichlet approach uses the same pointwise type I error rate, $\tilde\alpha$, for multiple quantile tests across the distribution, while choosing the value of $\tilde\alpha$ to ensure strong control of finite-sample FWER at level $\alpha$.

\subsection{Basic method, FWER, and computation}
\label{sec:1s-basic}

All of our methods use the probability integral transform. 
The following results are from \citet[pp.\ 236--238]{Wilks1962}, with some notational changes. 
\begin{theorem}[Wilks  \textbf{8.7.1}, \textbf{8.7.2}, \textbf{8.7.4}]
\label{thm:Wilks}
The following are true under \cref{a:iid,a:F}.  
Denote the order statistics by $X_{n:1}<\cdots<X_{n:n}$.  
Then $F(X_{n:1})$, $F(X_{n:2})-F(X_{n:1})$, \ldots, $F(X_{n:n})-F(X_{n:n-1})$, $1-F(X_{n:n})$ are random variables jointly following the $(n+1)$-variate Dirichlet distribution $\textrm{Dir}(1,\ldots,1)$.  
That is, the random variables $F(X_{n:1})$, \ldots, $F(X_{n:n})$ have the ordered $n$-variate Dirichlet distribution $\textrm{Dir}^*(1,\ldots,1;1)$, with marginals $F(X_{n:k})\sim\BetaDist(k,n+1-k)$.
\end{theorem}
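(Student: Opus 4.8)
The plan is to reduce the statement to a fact about order statistics of i.i.d.\ uniforms via the probability integral transform, and then to read off the Dirichlet density by a one-line change of variables. First, since $F$ is continuous (\cref{a:F}), $U_i \equiv F(X_i)\stackrel{iid}{\sim}\UnifDist(0,1)$; since $F$ is also strictly increasing over $\R$, it is a bijection and hence order-preserving, so $F(X_{n:k})=U_{n:k}$, the $k$-th order statistic of $U_1,\dots,U_n$. It therefore suffices to show that the spacings $\bigl(U_{n:1},\,U_{n:2}-U_{n:1},\,\dots,\,1-U_{n:n}\bigr)$ follow $\textrm{Dir}(1,\dots,1)$ on the $n$-dimensional simplex.

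Second, I would write down the joint density of $(U_{n:1},\dots,U_{n:n})$, which is the constant $n!$ on the open region $\{0<u_1<\cdots<u_n<1\}$ and zero elsewhere (standard: sum the $n!$ coordinate-permutation contributions of the i.i.d.\ $\UnifDist(0,1)$ density). Then I apply the linear change of variables $s_j = u_j - u_{j-1}$ for $j=1,\dots,n$ with $u_0\equiv 0$; this has unit Jacobian determinant and maps the region bijectively onto $\{s_j>0,\ \sum_{j=1}^{n}s_j<1\}$, so $(S_1,\dots,S_n)$ has constant density $n!$ there. Appending $S_{n+1}\equiv 1-\sum_{j=1}^{n}S_j = 1-U_{n:n}$, the resulting $(n+1)$-vector is supported on the open simplex $\{s_j>0,\ \sum_{j=1}^{n+1}s_j=1\}$ with flat density. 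Since the $\textrm{Dir}(a_1,\dots,a_{n+1})$ density is proportional to $\prod_j s_j^{a_j-1}$ with normalizing constant $\Gamma(\sum_j a_j)/\prod_j\Gamma(a_j)$, taking all $a_j=1$ gives exactly the constant $\Gamma(n+1)=n!$, identifying the distribution as $\textrm{Dir}(1,\dots,1)$ (equivalently $\textrm{Dir}^*(1,\dots,1;1)$ for the ordered coordinates $F(X_{n:k})$). For the marginals I would invoke the Dirichlet aggregation property: any partial sum $S_1+\cdots+S_k = U_{n:k}$ of a $\textrm{Dir}(1,\dots,1)$ vector with $n+1$ components is $\BetaDist\!\bigl(\sum_{j\le k}1,\ \sum_{j>k}1\bigr)=\BetaDist(k,n+1-k)$; alternatively this follows by integrating out the remaining coordinates, or from the textbook density of a single uniform order statistic.

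The argument is entirely routine, so the ``obstacle'' is minor bookkeeping rather than mathematical difficulty: (i) one must use strict monotonicity of $F$, not merely continuity, to pass from $F(X_i)$ to $F(X_{n:k})=U_{n:k}$ (continuity alone handles the no-ties/marginal issue, but order preservation needs monotonicity — \cref{a:F} supplies both); and (ii) one must be precise about the degeneracy — the $(n+1)$-vector of spacings lives on an $n$-dimensional affine slice, so ``density'' is with respect to $n$-dimensional Lebesgue measure on the simplex, which is why the change of variables is set up in the $n$ free coordinates $s_1,\dots,s_n$. Since the result is quoted essentially verbatim from \citet{Wilks1962}, in the paper itself I would just cite it and give the probability-integral-transform-plus-change-of-variables sketch rather than reproduce every detail.
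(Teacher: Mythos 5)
Your argument is correct, and it matches the paper's treatment: the paper gives no proof of \cref{thm:Wilks} at all, simply quoting it from \citet[pp.~236--238]{Wilks1962}, and your probability-integral-transform-plus-spacings change-of-variables sketch is exactly the standard derivation behind that cited result. Nothing further is needed beyond the citation, as you yourself note at the end.
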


\Cref{thm:Wilks} determines the finite-sample size of a single quantile test based on an order statistic. 
Specifically, consider the test of $H_{0\tau} \colon F^{-1}(\tau) \ge F_0^{-1}(\tau)$ that rejects when $X_{n:k}<F_0^{-1}(\tau)$ for some $k$. 
Under $H_0$, the type I error rate is bounded (tightly) by 
\begin{equation}\label{eqn:single-CI}
\Pr\bigl( X_{n:k} < F_0^{-1}(\tau) \bigr) 
\le \Pr\bigl( X_{n:k} < F^{-1}(\tau) \bigr) 
= \Pr\bigl( F(X_{n:k}) < \tau \bigr) 
= \Pr\bigl( \BetaDist(k,n+1-k) < \tau \bigr) , 
\end{equation}
i.e., the $\BetaDist(k,n+1-k)$ CDF evaluated at $\tau$. 
This CDF can be computed immediately by any modern statistical software. 
The only difficulty is if a specific $\alpha$ is desired for a specific $\tau$, in which case one cannot find an exact, non-randomized test \citep[but for solutions using interpolation, see][]{BeranHall1993,GoldmanKaplan2017a}. 

Here, instead of testing a specific $\tau$, we presume that the econometrician desires to test a wide range of quantiles. 
By choosing $\tau$ values that allow exact testing using order statistics $X_{n:k}$, 
we can get finite-sample results for a growing number ($n$) of quantiles. 
For comparison, \citet{GoldmanKaplan2017b} provide a confidence set for a fixed number of exactly pre-specified $\tau$ (and $\alpha$), with $O(n^{-1})$ coverage probability error. 

Our strategy is to use each of the $n$ order statistics to test a different $\tau$-quantile null hypothesis. 
Pointwise, let $B_{k,n}^{\tilde\alpha}$ denote the $\tilde\alpha$-quantile of the $\BetaDist(k,n+1-k)$ distribution. 
For any $\tilde\alpha\in(0,0.5)$, let the tested quantiles be 
\begin{equation}\label{eqn:def-ell-k-u-k}
\ell_k \equiv B_{k,n}^{\tilde\alpha} , \quad
   u_k \equiv B_{k,n}^{1-\tilde\alpha} , 
\end{equation}
so $\Pr\bigl(X_{n:k}<F^{-1}(\ell_k)\bigr)=\tilde\alpha$ and $\Pr\bigl(X_{n:k}>F^{-1}(u_k)\bigr)=\tilde\alpha$, using \cref{eqn:single-CI}. 
A one-sided test of $H_{0\ell_k} \colon F^{-1}(\ell_k) \ge F_0^{-1}(\ell_k)$ that rejects when $X_{n:k}<F_0^{-1}(\ell_k)$ thus has exact size $\tilde\alpha$, and similarly for the test of $H_{0u_k} \colon F^{-1}(u_k) \le F_0^{-1}(u_k)$ that rejects when $X_{n:k}>F_0^{-1}(u_k)$. 
By using the same $\tilde\alpha$ across the entire distribution, we achieve even sensitivity. 
More precisely, we achieve the same finite-sample size $\tilde\alpha$ for the pointwise tests at the $n$ quantile indices $\ell_k$ or $u_k$, $k=1,\ldots,n$. 

Moreover, using the Dirichlet distribution in \cref{thm:Wilks}, one can solve for the $\tilde\alpha$ value such that 
\begin{equation}\label{eqn:Dir-1s-1s-tilde}
\alpha 
= 1 - \Pr\Bigl( \bigcap_{k=1}^{n} X_{n:k}\ge F^{-1}(\ell_k) \Bigr) 
= 1 - \Pr\Bigl( \bigcap_{k=1}^{n} F(X_{n:k})\ge\ell_k \Bigr) . 
\end{equation}
This choice of $\tilde\alpha$ achieves strong control of finite-sample FWER at level $\alpha$. 
To see this, let $K\equiv\{k:F^{-1}(\ell_k)\ge F_0^{-1}(\ell_k)\}$, the set of true hypotheses. 
Then, 
\begin{align*}
\FWER
  &=   \overbrace{1 - \Pr(\textrm{no rejections among }k\in K)}^{\textrm{by definition of FWER}}
   =   \overbrace{1 - \Pr\Bigl( \bigcap_{k\in K}  X_{n:k}\ge F_0^{-1}(\ell_k) \Bigr) }^{\textrm{by definition of $H_{0\ell_k}$}}
\\&\le \overbrace{1 - \Pr\Bigl( \bigcap_{k\in K}  X_{n:k}\ge F  ^{-1}(\ell_k) \Bigr) }^{\textrm{because $F^{-1}(\ell_k)\ge F_0^{-1}(\ell_k)$ for all $k\in K$, by definition of $K$}}
   \le \overbrace{1 - \Pr\Bigl( \bigcap_{k=1}^{n} X_{n:k}\ge F  ^{-1}(\ell_k) \Bigr) }^{\textrm{because }K\subseteq\{1,2,\ldots,n\}}
\\&= \overbrace{\alpha }^{\textrm{from \cref{eqn:Dir-1s-1s-tilde}}} 
. 
\end{align*}
A parallel argument applies to the other one-sided case with $u_k$. 

To extend testing of the $n$ quantiles to the continuum of $H_{0\tau}$ for all $\tau\in(0,1)$, without affecting FWER, monotonicity of the quantile function is sufficient. 
If $X_{n:k}<F_0^{-1}(\ell_k)$, then $H_{0\ell_k} \colon F^{-1}(\ell_k) \ge F_0^{-1}(\ell_k)$ is rejected. 
If $X_{n:k}<F_0^{-1}(\tau)$ for another $\tau<\ell_k$, then $H_{0\tau} \colon F^{-1}(\tau)\ge F_0^{-1}(\tau)$ is also rejected. 
If we add this event, or rather its complement, into \cref{eqn:Dir-1s-1s-tilde}, however, it disappears because 
\[ \{F(X_{n:k})\ge \ell_k\} \cap \{F(X_{n:k})\ge \tau\} = \{F(X_{n:k})\ge \ell_k\} \]
since the event $\{F(X_{n:k})\ge \tau\}\supset \{F(X_{n:k})\ge \ell_k\}$ for any $\tau<\ell_k$. 

The full one-sided and two-sided MTPs are now described, followed by their strong control of exact FWER, and finally a computational improvement. 

\begin{method}\label{meth:Dir-1s-1s}
For \cref{task:1s-test-FWER-1s}, consider $H_{0\tau} \colon F^{-1}(\tau) \ge F_0^{-1}(\tau)$.  
Let $\ell_k\equiv B_{k,n}^{\tilde\alpha}$. 
Solve for $\tilde\alpha$ from \cref{eqn:Dir-1s-1s-tilde}, using \cref{thm:Wilks}.\footnote{Alternatively, use the $\tilde\alpha$ approximation in \cref{fact:alpha-tilde-rate} after adjusting the one-sided $\alpha$ to two-sided $\alpha_2=2\alpha-\alpha^2$ per Theorem 5.1 of \citet{MoscovichEtAl2016}; details below.} 
For every $\tau\in(0,1)$, reject $H_{0\tau}$ if and only if $F_0^{-1}(\tau)>\min\{X_{n:k}:\ell_k\ge\tau\}$. 
For $H_{0\tau} \colon F^{-1}(\tau) \le F_0^{-1}(\tau)$, replace $\ell_k$ with $u_k\equiv B_{k,n}^{1-\tilde\alpha}$ and reverse all inequalities. 
\end{method}

\begin{method}\label{meth:Dir-1s-2s}
For \cref{task:1s-test-FWER-2s}, let $\ell_k\equiv B_{k,n}^{\tilde\alpha}$ and $u_k\equiv B_{k,n}^{1-\tilde\alpha}$. 
Using \cref{thm:Wilks}, solve for $\tilde\alpha$ from 
\begin{equation}\label{eqn:Dir-1s-2s-tilde}
\alpha 
   = 1 - \Pr\Bigl( \bigcap_{k=1}^{n}\{F^{-1}(\ell_k)\le X_{n:k}\le F^{-1}(u_k)\} \Bigr)
   = 1 - \Pr\Bigl( \bigcap_{k=1}^{n}\{\ell_k \le F(X_{n:k}) \le u_k\} \Bigr) , 
\end{equation}
or use the $\tilde\alpha$ approximation in \cref{fact:alpha-tilde-rate}. 
For every $\tau\in(0,1)$, reject $H_{0\tau}$ if and only if $F_0^{-1}(\tau)>\min\{X_{n:k}:\ell_k\ge\tau\}$ or $F_0^{-1}(\tau)<\max\{X_{n:k}:u_k\le\tau\}$. 
\end{method}

\begin{theorem}\label{thm:Dir-1s-FWER}
Under \Cref{a:iid,a:F}, \cref{meth:Dir-1s-1s,meth:Dir-1s-2s} have strong control of finite-sample FWER. 
\end{theorem}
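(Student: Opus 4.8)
The plan is to reduce the continuum of pointwise hypotheses $\{H_{0\tau}\}_{\tau\in(0,1)}$ back to the $n$ order-statistic-indexed hypotheses and then invoke \cref{thm:Wilks}; the one-sided argument is essentially the one already laid out in the text preceding \cref{meth:Dir-1s-1s}, so I would mainly formalize it, fill in the reduction step, and spell out the two-sided case. Throughout, the only probabilistic input is the distribution-free joint law of $\bigl(F(X_{n:1}),\ldots,F(X_{n:n})\bigr)$ from \cref{thm:Wilks}, which makes $\tilde\alpha$ well-defined and the resulting bound independent of $F$.

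For \cref{meth:Dir-1s-1s} with $H_{0\tau}\colon F^{-1}(\tau)\ge F_0^{-1}(\tau)$, I would show that whenever a true $H_{0\tau}$ is rejected, $F(X_{n:k})<\ell_k$ for some $k\in\{1,\ldots,n\}$. Indeed, rejection means $F_0^{-1}(\tau)>X_{n:k}$ for some $k$ with $\ell_k\ge\tau$; truth of $H_{0\tau}$ means $F^{-1}(\tau)\ge F_0^{-1}(\tau)$; combining gives $X_{n:k}<F^{-1}(\tau)$, and strict monotonicity plus continuity of $F$ (\cref{a:F}, the probability integral transform) then yield $F(X_{n:k})<\tau\le\ell_k$. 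Hence the event ``some true $H_{0\tau}$ is rejected'' is contained in $\bigcup_{k=1}^{n}\{F(X_{n:k})<\ell_k\}$, whose probability equals $\alpha$ by the defining \cref{eqn:Dir-1s-1s-tilde}. Thus $\FWER\le\alpha$ for every $F$ and every $F_0$, i.e., strong control (with equality when $F_0=F$). The case $H_{0\tau}\colon F^{-1}(\tau)\le F_0^{-1}(\tau)$ is symmetric, using $u_k$ in place of $\ell_k$ and the events $\{F(X_{n:k})>u_k\}$. For \cref{meth:Dir-1s-2s}, where $H_{0\tau}\colon F^{-1}(\tau)=F_0^{-1}(\tau)$, I run the same argument on both sides at once: a false rejection of $H_{0\tau}$ either comes from $F_0^{-1}(\tau)>X_{n:k}$ with $\ell_k\ge\tau$, which (using $F^{-1}(\tau)=F_0^{-1}(\tau)$ and monotonicity) forces $F(X_{n:k})<\ell_k$, or from $F_0^{-1}(\tau)<X_{n:k}$ with $u_k\le\tau$, which forces $F(X_{n:k})>u_k$; so the bad event lies in $\bigcup_{k=1}^{n}\bigl(\{F(X_{n:k})<\ell_k\}\cup\{F(X_{n:k})>u_k\}\bigr)$, of probability $\alpha$ by \cref{eqn:Dir-1s-2s-tilde}. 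I would also note in passing that $\tilde\alpha$ exists for the relevant $\alpha$: the right-hand sides of \cref{eqn:Dir-1s-1s-tilde,eqn:Dir-1s-2s-tilde} are continuous and increasing in $\tilde\alpha$ and vanish as $\tilde\alpha\to0$, so the target value is attained.

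The main obstacle is precisely the reduction from the continuum $\tau\in(0,1)$ to the finite family indexed by the order statistics: one must confirm that including the extra quantiles $\tau\notin\{\ell_k\}\cup\{u_k\}$ cannot inflate the FWER. The point is that a given order statistic $X_{n:k}$ is the only statistic that can trigger rejection at those $\tau$, and a false rejection at any such $\tau$ pins $F(X_{n:k})$ strictly below $\ell_k$ (or strictly above $u_k$); equivalently, the nesting $\{F(X_{n:k})\ge\tau\}\supseteq\{F(X_{n:k})\ge\ell_k\}$ for $\tau\le\ell_k$ noted in the text is exactly what lets the extra events be absorbed into the finite intersection. Once this is in hand, the rest is bookkeeping plus the distribution-free joint law of \cref{thm:Wilks}.
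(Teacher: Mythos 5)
Your argument is correct and is essentially the paper's own proof: the one-sided case matches the main-text chain of inequalities based on \cref{eqn:Dir-1s-1s-tilde} and \cref{thm:Wilks} (your event-containment/union phrasing is just the complement of the paper's intersection chain, including the same nesting $\{F(X_{n:k})\ge\tau\}\supseteq\{F(X_{n:k})\ge\ell_k\}$ for the continuum reduction), and the two-sided case matches the appendix argument combining the $\ell_k$ and $u_k$ events via \cref{eqn:Dir-1s-2s-tilde}. No gaps.
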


Additionally, we contribute a new, fast approximation that can be used not only for \cref{meth:Dir-1s-1s,meth:Dir-1s-2s} but also to compute GOF $p$-values as well as uniform confidence bands for $F(\cdot)$. 
Solving \cref{eqn:Dir-1s-2s-tilde} requires approximating the $n$-variate Dirichlet distribution by either numerical integration or simulation (i.e., drawing standard uniform order statistics), which can be slow for large $n$. 
Extensive simulations have revealed a closed-form formula to approximate the necessary $\tilde\alpha$ as a function of $\alpha$ and $n$ with a high degree of accuracy. 
Computation now takes only a couple seconds even for $n=\num{100000}$. 

\begin{fact}\label{fact:alpha-tilde-rate}
Under \cref{a:iid,a:F}, for $\alpha\in\{0.001,0.01,0.05,0.1,0.2,0.5,0.7,0.9\}$ and $n\in[4,10^6]$, for two-sided testing,  
\begin{equation*}
\tilde\alpha = \exp\bigl\{-c_1(\alpha)-c_2(\alpha)\sqrt{\ln[\ln(n)]}-c_3(\alpha)[\ln(n)]^{c_4(\alpha)}\bigr\} , 
\end{equation*}
with 
$c_1(\alpha) = -2.75 -1.04 \ln(\alpha) $, 
$c_2(\alpha) =  4.76 -1.20 \alpha $, 
$c_3(\alpha) = 1.15 -2.39\alpha$, and 
$c_4(\alpha) = -3.96 + 1.72 \alpha^{0.171} $, 
provides an approximate solution to \cref{eqn:Dir-1s-2s-tilde}. 
Define the relative approximation error to be $(\alpha^*-\alpha)/\alpha$, where $\alpha$ is the nominal FWER and $\alpha^*$ is the true FWER (i.e., simulated with $10^6$ replications). 
Then, across all $\alpha$ and $n$ listed above, the relative approximation error never exceeds $20\%$ in absolute value. 
Excluding $\alpha=0.001$, absolute relative approximation error never exceeds $11\%$ (e.g., worst-case FWER is $0.111$ when $\alpha=0.1$). 
\end{fact}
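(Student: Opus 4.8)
The plan is to treat \cref{fact:alpha-tilde-rate} for what it is: a claim that a particular parametric formula, with coefficients fit by regression, approximates the exact calibration map $\alpha\mapsto\tilde\alpha$ to within the stated tolerance, as confirmed by simulation. The argument therefore has three parts: (i) show the object being approximated is well defined and computable; (ii) explain how the functional form and its coefficients are obtained; (iii) report the numerical comparison that establishes the error bounds.

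First I would fix $n\ge4$ and show that \cref{eqn:Dir-1s-2s-tilde} defines $\tilde\alpha$ uniquely as a function of $\alpha$. By \cref{thm:Wilks}, $\bigl(F(X_{n:1}),\ldots,F(X_{n:n})\bigr)$ is distributed as the standard uniform order statistics $\bigl(U_{n:1},\ldots,U_{n:n}\bigr)$, so with $\ell_k=B_{k,n}^{\tilde\alpha}$ and $u_k=B_{k,n}^{1-\tilde\alpha}$ the right-hand side of \cref{eqn:Dir-1s-2s-tilde} is the distribution-free function
\[
\alpha(\tilde\alpha)\;=\;1-\Pr\Bigl(\bigcap_{k=1}^n\{\ell_k\le U_{n:k}\le u_k\}\Bigr)
\]
of $\tilde\alpha$ alone. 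Since $B_{k,n}^{\tilde\alpha}$ is strictly increasing and $B_{k,n}^{1-\tilde\alpha}$ strictly decreasing in $\tilde\alpha$, the interval $[\ell_k,u_k]$ shrinks as $\tilde\alpha$ grows, so $\alpha(\cdot)$ is continuous and strictly increasing on $(0,1/2)$; hence $\tilde\alpha(\alpha,n)$ exists, is unique, and is smooth and monotone in both arguments. The containment probability is exactly computable for moderate $n$ via a Steck/Noé-type recursion for uniform order statistics confined between bounds, and accurately approximable for large $n$ by direct simulation of $\bigl(U_{n:1},\ldots,U_{n:n}\bigr)$.

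Second I would motivate the functional form. The rate at which $\tilde\alpha$ must shrink with $n$ is governed by the extreme-value behavior of the standardized empirical process near the endpoints, which carries iterated-logarithm normalizations (cf.\ the Darling--Erd\H{o}s type theory discussed by \citet{Jaeschke1979,Eicker1979}); this suggests writing $\ln\tilde\alpha$ as a combination of a constant, a $\sqrt{\ln\ln n}$ term, and a power of $\ln n$, with $\alpha$-dependent coefficients. I would then fit $c_1(\alpha),\ldots,c_4(\alpha)$ by nonlinear least squares: on a grid of $n$ values compute the exact or simulated $\tilde\alpha(\alpha,n)$ at each listed $\alpha$, regress $\ln\tilde\alpha$ on that basis, and finally summarize the per-$\alpha$ coefficients by the affine/power functions of $\alpha$ displayed in the statement. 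The verification — the actual content of the Fact — is then mechanical: for each listed $\alpha$ and a geometrically spaced collection of $n\in[4,10^6]$ (relying on smoothness of both $\tilde\alpha(\alpha,\cdot)$ and the approximant to cover the gaps), plug the formula's $\tilde\alpha$ into $\alpha(\cdot)$, estimate the true FWER $\alpha^*$ with $10^6$ simulated draws, and tabulate $(\alpha^*-\alpha)/\alpha$; the $20\%$ overall bound and the $11\%$ bound excluding $\alpha=0.001$ are read off the maxima.

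The main obstacle is that there is no closed-form proof: the claim rests on the adequacy of the chosen parametric family — which is why the asymptotic motivation matters, since a poorly chosen basis could violate the tolerance — and on the fact that the reported errors are Monte Carlo estimates. The latter point is benign and I would state it explicitly: with $10^6$ replications the simulation standard error of an estimated FWER near $0.1$ is about $\sqrt{0.1\cdot0.9/10^6}\approx3\times10^{-4}$, i.e.\ well under $0.3\%$ relative, negligible against the $11\%$ to $20\%$ tolerances; for small $n$ one can additionally cross-check $\alpha^*$ against the exact recursion. The case $\alpha=0.001$ is predictably the loosest, because both the simulation noise relative to a tiny nominal level and the extrapolation of the coefficient functions to an extreme $\alpha$ are most severe there, which is exactly why the statement carves it out separately.
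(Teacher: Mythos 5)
Your proposal matches the paper's treatment: the paper offers no analytic proof of this Fact, but establishes it exactly as you describe — the calibration map $\tilde\alpha\mapsto\alpha$ is simulated (exploiting its monotonicity in the scalar $\tilde\alpha$, as in the computational appendix), the formula's coefficients come from fitting extensive simulations, and the stated $20\%$/$11\%$ bounds are read off $10^6$-replication checks on a grid of $n$ values (small integers, $n=\lfloor\exp\{\exp(\kappa)\}\rfloor$, and $10^4,10^5,10^6$), with monotonicity/smoothness of $\tilde\alpha(\alpha,n)$ covering interpolation between grid points. Your added remarks on uniqueness of the calibrated $\tilde\alpha$ and on Monte Carlo error being negligible relative to the tolerance mirror the paper's own calibration-accuracy discussion, so the argument is essentially the same.
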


For one-sided multiple testing, to apply \cref{fact:alpha-tilde-rate}, the initial $\alpha$ can be adjusted using Theorem 5.1 of \citet{MoscovichEtAl2016}, which is asymptotically exact and slightly conservative in finite samples. 
Specifically, the two-sided FWER $\alpha_2$ and one-sided FWER $\alpha_1$ (either lower or upper) are related by $\alpha_2 = 2\alpha_1 - \alpha_1^2$ asymptotically; in finite samples, $2\alpha_1 \ge \alpha_2 \ge 2\alpha_1 - \alpha_1^2$. 

\begin{figure}[htbp]
\centering
\hfill
\includegraphics[width=0.48\textwidth,clip=true,trim=40 15 20 75]{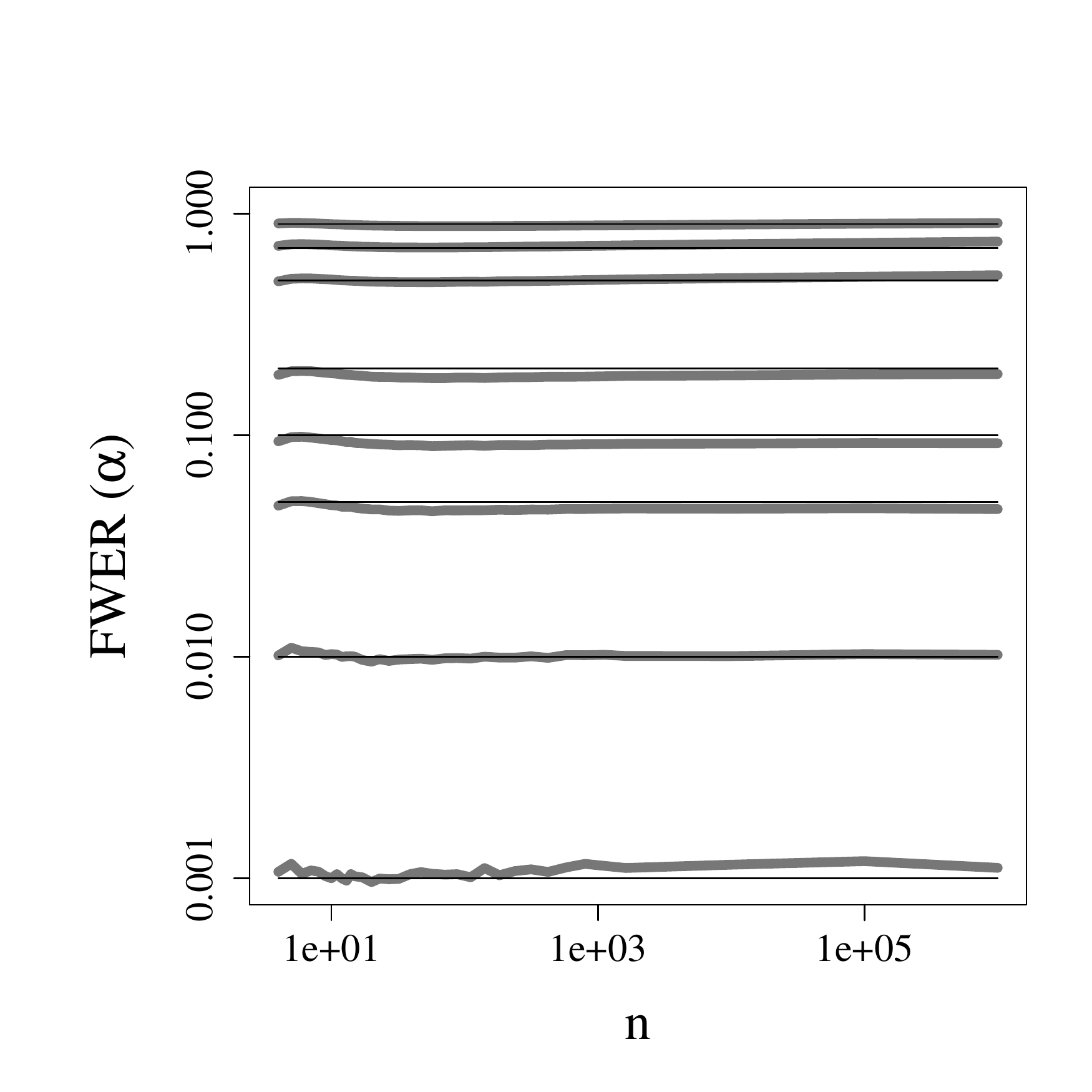}%
\hfill%
\includegraphics[width=0.48\textwidth,clip=true,trim=40 15 20 75]{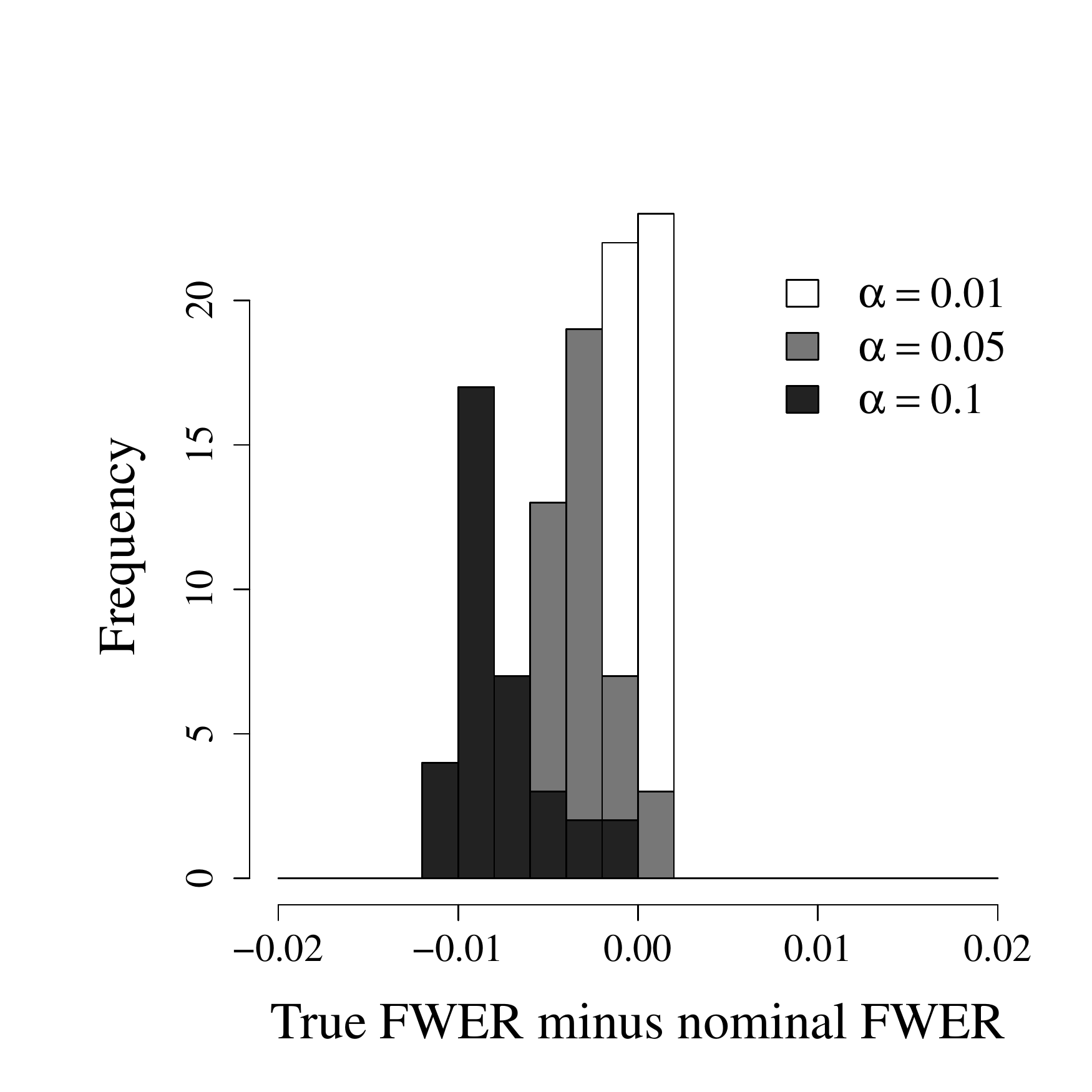}%
\hfill\null
\caption{\label{fig:approx-err-hists}Histograms showing the accuracy of \cref{fact:alpha-tilde-rate} for sample sizes $n\in\{4,5,\ldots,14\}$, $n=\lfloor\exp\{\exp(\kappa)\}\rfloor$ for $\kappa=1.00,1.05,\ldots,2.00$, and $n\in\{10^4,10^5,10^6\}$. 
``True'' FWER is from $10^6$ simulation replications. 
Left: thick gray lines show true FWER; thin black lines show nominal FWER. 
Right: FWER error (true minus nominal) for $\alpha\in\{0.01,0.05,0.1\}$; note that the total height of each bar is the total across all three $\alpha$ values.}
\end{figure}

The accuracy of the \cref{fact:alpha-tilde-rate} formulas is shown in 
\cref{fig:approx-err-hists}. 
As stated in \cref{fact:alpha-tilde-rate}, across all $\alpha$ and $n$, the relative FWER error never exceeds $20\%$ (and is usually close to zero), or $11\%$ when excluding $\alpha=0.001$. 
For example, for $\alpha=0.1$, true FWER is always between $0.089$ and $0.111$. 
To see more specific values, the right panel of \cref{fig:approx-err-hists} shows a histogram of true minus nominal FWER differences. 
For the most commonly used $\alpha\in\{0.01,0.05,0.1\}$, these are seen to err slightly on the conservative side (i.e., true FWER is below nominal) and are often close to zero. 
In our code, we use a version of \cref{fact:alpha-tilde-rate} with coefficients specific to $\alpha$, which further increases the accuracy. 

By monotonicity of the mapping $\tilde\alpha(\alpha,n)$ in $\alpha$ and $n$, additional approximation error from interpolation between the given values of $n$ and $\alpha$ is small.  
We conjecture that the formulas are accurate even outside the ranges given, especially in $n$; moreover, few economic applications require $n>10^6$, $\alpha<0.001$, or $\alpha>0.9$.  

\Cref{fact:alpha-tilde-rate} may also be used to quickly compute GOF $p$-values and uniform confidence bands; see \citet{BujaRolke2006}, \citet{AldorNoimanEtAl2013}, our code, and an earlier version of this paper for details. 

\Cref{fact:alpha-tilde-rate} also illustrates the cost of multiple testing across the entire distribution instead of focusing power on a single quantile. 
Instead of a single quantile test with size $\alpha$, the MTP has pointwise size $\tilde\alpha$, which may be much smaller and goes to zero as $n\to\infty$. 
\Cref{fig:sim-1s-pt} already visualized two examples with FWER $\alpha=0.1$, showing $\tilde\alpha\approx0.01$ when $n=20$ and $\tilde\alpha\approx0.005$ when $n=100$. 
\Cref{fig:sim-2s-pt} visualized two-sample examples. 
The one-sample lookup table included in the replication materials on the latter author's website shows $\tilde\alpha$ values for many more $n$ and $\alpha$.

\subsection{Procedures to improve power}
\label{sec:1s-power}

\subsubsection{Stepdown procedure}
\label{sec:1s-power-stepdown}

Within our quantile multiple testing framework, a stepdown procedure to improve power is possible. 
The general stepdown strategy dates back to \citet{Holm1979}; see also \citet[Ch.\ 9]{LehmannRomano2005text}. 
Although the stepdown procedure strictly improves power (against false hypotheses), it does not ``dominate'' in the general decision-theoretic sense since FWER also increases in some cases, though never exceeding $\alpha$. 

In our context, consider one-sided multiple testing with $\ell_k$. 
If at least one $H_{0\ell_k}$ is rejected by the basic MTP, then we proceed to test the remaining hypotheses as if the rejected ones are indeed false. 
In that case, we can remove such $\ell_k$ from the calibration equation \cref{eqn:Dir-1s-1s-tilde}. 
Intuitively, with fewer true quantile hypotheses, we can test the remainder with greater pointwise size while maintaining the same overall FWER control. 
Mechanically, this is accomplished by changing which order statistic is used to test a hypothesis, to make it more likely to reject.\footnote{If instead $\tilde\alpha$ were increased, then the $\ell_k$ would change. This may be reasonable practically, but it would complicate matters and require a different strategy to prove strong control of FWER.} 
As in other stepdown procedures, the ``trick'' is that if one of the initially rejected hypotheses was in fact true, then a ``familywise error'' has already been made, so rejecting additional hypotheses has no effect on the FWER. 

\begin{method}\label{meth:Dir-1s-1s-stepdown}
For \cref{task:1s-test-FWER-1s}, let $\hat{K}_0\equiv\{1,\ldots,n\}$, and let $r_{k,0}=k$ for $k\in\hat{K}_0$.  
Consider $H_{0\tau} \colon F^{-1}(\tau) \ge F_0^{-1}(\tau)$.  
Let $\ell_k=B^{\tilde\alpha}_{k,n}$, where (as in \cref{meth:Dir-1s-1s}) $\tilde\alpha$ satisfies 
\begin{equation}\label{eqn:meth-1s-stepdown-calib-i0}
\alpha 
  = 1 - \Pr\Bigl( \bigcap_{k\in\hat{K}_0} X_{n:r_{k,0}}\ge F^{-1}(\ell_k) \Bigr) 
    = 1 - \Pr\Bigl( \bigcap_{k\in\hat{K}_0} F(X_{n:r_{k,0}})\ge\ell_k \Bigr) . 
\end{equation}
\Cref{thm:Wilks} determines the joint distribution of the $F(X_{n:k})$ in \cref{eqn:meth-1s-stepdown-calib} and thus the probability. 
Reject $H_{0\tau}$ if $F_0^{-1}(\tau) > \min\{X_{n:k} : \ell_k\ge\tau \}$ (as in \cref{meth:Dir-1s-1s}).  
Then, increment $i$ to $i=1$ and iterate the following. 
\begin{enumeratecomp}[Step 1.]
 \item\label{it:start} Let $\hat{K}_i=\{k : H_{0\ell_k}\textrm{ not yet rejected}\}$.  If $\hat{K}_i=\emptyset$ or $\hat{K}_i=\hat{K}_{i-1}$, then stop. 
 \item Choose integers $r_{k,i}\le r_{k,i-1}$ (based only on $\hat{K}_i$) satisfying:\footnote{\label{foot:rk}This leaves many possibilities for $r_{k,i}$.  In our code, we use a ``greedy'' algorithm \citep[e.g.,][\S4.3]{SedgewickWayne2011}, iteratively decreasing (by one) whichever $r_{k,i}$ achieves the biggest pointwise RP increase, until none can be decreased without violating \cref{eqn:meth-1s-stepdown-calib}.} 
\begin{equation}\label{eqn:meth-1s-stepdown-calib}
\alpha 
  \ge 
    1 - \Pr\Bigl( \bigcap_{k\in\hat K_i} F(X_{n:r_{k,i}})\ge\ell_k \Bigr) . 
\end{equation}
 \item Reject any additional $H_{0\tau}$ for which $F_0^{-1}(\tau) > \min\{X_{n:r_{k,i}} : \ell_k\ge\tau, k\in\hat{K}_i \}$.
 \item Increment $i$ by one and return to Step \ref{it:start}.
\end{enumeratecomp}

For $H_{0\tau} \colon F^{-1}(\tau) \le F_0^{-1}(\tau)$, replace $\ell_k$ with $u_k=B^{1-\tilde\alpha}_{k,n}$ and reverse the inequalities. 
\end{method}

\Cref{meth:Dir-1s-2s-stepdown} in \cref{sec:app-meth} describes a two-sided stepdown procedure. 

\begin{theorem}\label{thm:Dir-1s-stepdown-FWER}
Under \cref{a:iid,a:F}, \cref{meth:Dir-1s-1s-stepdown,meth:Dir-1s-2s-stepdown} have strong control of finite-sample FWER. 
\end{theorem}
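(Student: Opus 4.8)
The plan is to cast \cref{meth:Dir-1s-1s-stepdown} (I treat the one-sided, lower-quantile version; the upper-quantile case and the two-sided \cref{meth:Dir-1s-2s-stepdown} run in parallel) as an instance of the generic stepdown scheme of \citet[\S9.1]{LehmannRomano2005text}, in the spirit of \citet{Holm1979}, reusing the chain-of-inequalities argument from the proof of \cref{thm:Dir-1s-FWER} for its ingredients. First I would dispense with the continuum exactly as in the discussion before \cref{meth:Dir-1s-1s}: if at some step a true $H_{0\tau}$ is rejected, it is because $F_0^{-1}(\tau)>X_{n:r_{k,i}}$ for the triggering index $k\in\hat K_i$ (so $\ell_k\ge\tau$), whence $X_{n:r_{k,i}}<F_0^{-1}(\tau)\le F^{-1}(\tau)\le F^{-1}(\ell_k)$ by \cref{a:F} and thus $F(X_{n:r_{k,i}})<\ell_k$; so a familywise error on the continuum entails one on the $n$ discrete hypotheses $\{H_{0\ell_k}\}_{k=1}^n$, and it suffices to control FWER for the latter.

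Next, for each $S\subseteq\{1,\ldots,n\}$ I would consider the ``implied'' test $\phi_S$ of the intersection null $H_S\equiv\bigcap_{k\in S}H_{0\ell_k}$: with the indices $r_k^{(S)}\le k$ chosen as \cref{meth:Dir-1s-1s-stepdown} prescribes for $\hat K_i=S$, let $\phi_S$ reject $H_{0\ell_k}$ (for $k\in S$) when $F_0^{-1}(\ell_k)>\min\{X_{n:r_j^{(S)}}:j\ge k,\ j\in S\}$, and reject $H_S$ when it makes at least one such rejection. The key lemma is that $\phi_S$ has size $\le\alpha$ for $H_S$: under $H_S$, a rejection of $H_{0\ell_k}$ via index $j$ forces $X_{n:r_j^{(S)}}<F_0^{-1}(\ell_k)\le F^{-1}(\ell_k)\le F^{-1}(\ell_j)$, hence $F(X_{n:r_j^{(S)}})<\ell_j$, so that $\{\phi_S\text{ rejects }H_S\}\subseteq\{\exists j\in S: F(X_{n:r_j^{(S)}})<\ell_j\}$, whose probability equals $1-\Pr(\bigcap_{k\in S}F(X_{n:r_k^{(S)}})\ge\ell_k)\le\alpha$ by the calibration \eqref{eqn:meth-1s-stepdown-calib} evaluated at $S$ (together with the joint Dirichlet law of the $F(X_{n:k})$ from \cref{thm:Wilks}). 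This step is essentially a transcription of the inequalities in the proof of \cref{thm:Dir-1s-FWER}.

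The standard stepdown argument then concludes. Let $I\equiv\{k: F^{-1}(\ell_k)\ge F_0^{-1}(\ell_k)\}$ be the set of true hypotheses, and on a familywise error let $i^*$ be the first step at which a true $H_{0\ell_{k_0}}$ is rejected. Since no true hypothesis was rejected before step $i^*$, the running set obeys $I\subseteq\hat K_{i^*}$, and $\phi_{\hat K_{i^*}}$ has just rejected a true hypothesis; if this can be transferred to $\phi_I$ --- i.e., if $\phi_I$ must then also reject some $H_{0\ell_k}$ with $k\in I$ --- then, because $H_I$ is true, the size lemma yields $\FWER\le\Pr(\phi_I\text{ rejects }H_I)\le\alpha$.

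The transfer is where the real work lies and is, I expect, the main obstacle, since it must contend with the data-dependence of the running set $\hat K_{i^*}$: it requires the index-selection to be \emph{monotone} in two respects --- (i) within any configuration the $r_k^{(S)}$ are nondecreasing in $k$, so that for $k\in S$ the ``$\min$ over $\{j\ge k\}$'' rejection of $H_{0\ell_k}$ is driven by $X_{n:r_k^{(S)}}$ and not by some larger index possibly lying outside $I$; and (ii) shrinking the configuration only (weakly) lowers the chosen indices, $r_k^{(I)}\le r_k^{(\hat K_{i^*})}$ for $k\in I$ --- which is what the requirements ``$r_{k,i}$ depends only on $\hat K_i$'' and ``$r_{k,i}\le r_{k,i-1}$'' in \cref{meth:Dir-1s-1s-stepdown} are designed to secure, and which is respected by the greedy construction of footnote~\ref{foot:rk}. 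Granting (i)--(ii), the rejection at step $i^*$ gives $X_{n:r_{k_0}^{(I)}}\le X_{n:r_{k_0,i^*}}<F_0^{-1}(\ell_{k_0})$, forcing $\phi_I$ to reject $H_{0\ell_{k_0}}$ with $k_0\in I$; hence $\FWER\le\alpha$. The two-sided \cref{meth:Dir-1s-2s-stepdown} is handled identically, tracking the lower ($\ell_k$) and upper ($u_k$) sides jointly through a two-sided version of \eqref{eqn:meth-1s-stepdown-calib}.
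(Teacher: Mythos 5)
Your proposal is correct and follows essentially the same route as the paper's proof: both reduce the continuum to the $n$ discrete hypotheses $H_{0\ell_k}$, invoke the calibration inequality evaluated at the true index set ($K=I$) via the chain of inequalities from \cref{thm:Dir-1s-FWER}, and use monotonicity of the index selection (smaller configurations give weakly smaller $r_k$, selection depending only on the current set) to compare the actual stepdown run with the run restricted to the true set --- the paper simply phrases this as an induction over iterations on the good event $\bigcap_{k\in K}\bigl\{X_{n:r_{k^*}}\ge F_0^{-1}(\ell_k)\bigr\}$, which is the contrapositive of your first-false-rejection transfer through the intersection tests $\phi_S$. Your conditions (i)--(ii) are precisely the monotonicity properties the paper uses implicitly (its claim $r_{k,i}\ge r_{k^*}$ when $\hat K_i\supseteq K$, and the fact that rejection of a true $H_{0\ell_k}$ is driven by its own index), so making them explicit is faithful to the argument rather than a deviation.
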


\subsubsection{Pre-test procedure}
\label{sec:1s-power-pretest}

For one-sided multiple testing, a pre-test (actually ``pre-MTP'') can improve pointwise power. 
It can also improve power of the corresponding global test, which in the one-sided case has first-order stochastic dominance as the null hypothesis.%
\footnote{Alternatively, one may test a null of non-dominance as suggested by \citet{DavidsonDuclos2013}; there are also big differences between frequentist and Bayesian inference for first-order stochastic dominance, even asymptotically and with nonparametric methods, as pointed out by \citet{KaplanZhuo2017a} and \citet{Zhuo2017}.} 
Intuitively, the basic MTP for $H_{0\tau} \colon F^{-1}(\tau) \ge F_0^{-1}(\tau)$ over $\tau\in(0,1)$ must control FWER even for the most difficult $F(\cdot)$, where $F^{-1}(\tau)=F_0^{-1}(\tau)$ for all $\tau\in(0,1)$. 
In the GOF context, this $F^{-1}(\cdot)=F_0^{-1}(\cdot)$ is commonly called the ``least favorable configuration'': 
it is the $F(\cdot)$ that maximizes the type I error rate. 
If (by pre-testing) $F(\cdot)$ can be restricted to a subset that excludes the least favorable configuration, 
then we can increase RPs while still controlling FWER. 

Specifically, the pre-test determines at which $\tau$ the constraint $H_{0\tau} \colon F^{-1}(\tau) \ge F_0^{-1}(\tau)$ appears slack, i.e., where we can reject $F^{-1}(\tau)\le F_0^{-1}(\tau)$ in favor of $F^{-1}(\tau)>F_0^{-1}(\tau)$.  
Then, we recalibrate $\tilde\alpha$ using only the unrejected $H_{0\tau}$ to improve power. 

Falsely inferring that the constraint is slack leads to over-rejection of the resulting MTP, so the probability of doing so should be small.  
This probability is the FWER of the pre-test.  
If $\alpha_p$ is the FWER level of the pre-test, then $\alpha_p\to0$ ensures zero asymptotic size distortion.\footnote{This idea is found in \citet{LintonEtAl2010}, whose (13) has $c_N\to0$, and in \citet{DonaldHsu2016}, whose (3.4) has $a_N\to-\infty$, among others.} 
Of course, in any finite sample, $\alpha_p>0$, so $\alpha_p$ should be tolerably small.  
We suggest $\alpha_p=\alpha/\ln\left[\ln\left(\max\{n,15\}\right)\right]$. 

The pre-test implemented in our code is described in \cref{meth:1s-pretest} (and its strong control of FWER in \cref{prop:pretest-FWER}) in \cref{sec:app-meth}.  
The overall method (of which the pre-test is the first step) is described in \cref{meth:Dir-1s-pretest}. 

\begin{method}\label{meth:Dir-1s-pretest}
For \cref{task:1s-test-FWER-1s}, consider $H_{0\tau} \colon F^{-1}(\tau) \ge F_0^{-1}(\tau)$ over $\tau\in(0,1)$. 
First pre-test $H_{0\tau} \colon F^{-1}(\tau) \le F_0^{-1}(\tau)$ for $\tau\in(0,1)$ using \cref{meth:1s-pretest} with strong control of FWER at level $\alpha_p=\alpha/\ln\left[\ln\left(\max\{n,15\}\right)\right]$.  
Let $\hat{K}$ denote the set of $k$ such that $H_{0\ell_k}$ was not rejected by the pre-test, defining $\ell_k$ as in \cref{eqn:def-ell-k-u-k}.  
Then choose integers $r_k\ge k$ such that 
\[  \alpha 
  \ge 1 - \Pr\Bigl( \bigcap_{k\in\hat{K}} X_{n:r_k} \ge F^{-1}(\ell_k) \Bigr)
    = 1 - \Pr\Bigl( \bigcap_{k\in\hat{K}} F(X_{n:r_k}) \ge \ell_k \Bigr)
  , \]
computing the probability using \cref{thm:Wilks}.\footnote{Similar remarks to \cref{foot:rk} apply.} 
Reject $H_{0\tau}$ when $\min\{X_{n:k}:\ell_k\ge\tau,k\in\hat K\}<F_0^{-1}(\tau)$. 

For $H_{0\tau} \colon F^{-1}(\tau) \le F_0^{-1}(\tau)$, reverse inequalities and replace $\ell_k$ with $u_k$ (from \cref{eqn:def-ell-k-u-k}). 
\end{method}

\begin{theorem}\label{thm:Dir-1s-pretest-FWER}
Under \cref{a:iid,a:F}, \cref{meth:Dir-1s-pretest} has strong control of finite-sample FWER at level $\alpha+\alpha_p$, approaching $\alpha$ as $n\to\infty$. 
\end{theorem}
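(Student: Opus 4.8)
The plan is to bound the overall FWER by splitting the event ``the main MTP falsely rejects some true $H_{0\tau}$'' according to whether the pre-test itself commits a familywise error. Let $G$ denote the event that the pre-test --- which tests the reversed inequalities $H_{0\tau}\colon F^{-1}(\tau)\le F_0^{-1}(\tau)$ --- rejects none of its \emph{true} hypotheses, i.e.\ rejects no $\tau$ with $F^{-1}(\tau)\le F_0^{-1}(\tau)$. Since \cref{meth:1s-pretest} has strong control of FWER at level $\alpha_p$ (\cref{prop:pretest-FWER}), we have $\Pr(G^{c})\le\alpha_p$, and hence
\[
\FWER\;\le\;\Pr(G^{c})+\Pr\bigl(\{\text{main false rejection}\}\cap G\bigr)\;\le\;\alpha_p+\Pr\bigl(\{\text{main false rejection}\}\cap G\bigr).
\]
It therefore suffices to show the second term is at most $\alpha$; adding the two bounds gives level $\alpha+\alpha_p$, and since $\alpha_p=\alpha/\ln[\ln(\max\{n,15\})]\to0$, this also gives the ``approaching $\alpha$'' conclusion.

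Second, I would pass from the continuum of $\tau\in(0,1)$ to the finite grid $\{\ell_k\}_{k\in\hat K}$ exactly as for the basic method in \cref{sec:1s-basic}. By monotonicity of $F^{-1}(\cdot)$ and the form of the rejection region, a false rejection of a true $H_{0\tau}$ (so $F^{-1}(\tau)\ge F_0^{-1}(\tau)$) forces, for some $k\in\hat K$ with $\ell_k\ge\tau$, the chain $X_{n:r_k}<F_0^{-1}(\tau)\le F^{-1}(\tau)\le F^{-1}(\ell_k)$, i.e.\ $F(X_{n:r_k})<\ell_k$. Hence $\{\text{main false rejection}\}\subseteq\bigcup_{k\in\hat K}\{F(X_{n:r_k})<\ell_k\}$, so
\[
\Pr\bigl(\{\text{main false rejection}\}\cap G\bigr)\;\le\;\Pr\Bigl(G\cap\textstyle\bigcup_{k\in\hat K}\{F(X_{n:r_k})<\ell_k\}\Bigr).
\]
The calibration of \cref{meth:Dir-1s-pretest} forces $1-\Pr\bigl(\bigcap_{k\in\hat K}\{F(X_{n:r_k})\ge\ell_k\}\bigr)\le\alpha$ when $\hat K$, the $\ell_k$, and the $r_k$ are held fixed, with that Dirichlet probability supplied by \cref{thm:Wilks}; and on $G$ every index $k$ with $F^{-1}(\ell_k)\le F_0^{-1}(\ell_k)$ survives (its pre-test hypothesis is true, hence not rejected), so $\hat K$ always contains the ``binding'' indices.

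The remaining --- and, I expect, the main --- obstacle is that $\hat K$ (and with it the recalibrated $\ell_k$ and the chosen $r_k$) is random and correlated with the order statistics, so the ``plug-in'' calibration inequality does not by itself bound the true probability of the event displayed above. I would resolve this exactly as in the stepdown proof of \cref{thm:Dir-1s-stepdown-FWER}: partition over the finitely many realizations $S$ of $\hat K$, each of which (on $G$) contains the binding index set; use monotonicity of the greedy choice of $r_k$ in the surviving index set --- a larger surviving set forces weakly less aggressive order-statistic indices, just as a larger index set forces weakly larger critical values in Romano--Wolf-type stepdown arguments --- to dominate, on each piece, the realized bad event by one fixed, nonrandom event; and then bound that fixed event's probability by $\alpha$ through the calibration, the clean benchmark being the choice of $r_k$ and quantile levels already validated for a superset of $\hat K$. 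This yields $\Pr(\{\text{main false rejection}\}\cap G)\le\alpha$ and hence $\FWER\le\alpha+\alpha_p$. The one-sided case phrased with $u_k$ and the reversed constraint, and the reduction back to \cref{task:1s-test-FWER-1s}, follow by the same symmetry used for \cref{meth:Dir-1s-1s}.
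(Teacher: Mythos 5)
Your first paragraph is, in substance, the paper's entire proof: the paper simply attributes every pre-test familywise error (probability at most $\alpha_p$ by \cref{prop:pretest-FWER}) to the overall FWER, charges the main step at most $\alpha$, and explicitly labels the resulting bound $\alpha+\alpha_p$ as a conservative, worst-case accounting ("assume that any false pre-test rejection leads to a false rejection of the overall test," and that the two error sources never coincide). In particular, the paper does \emph{not} attempt to prove rigorously that $\Pr(\{\text{main false rejection}\}\cap G)\le\alpha$ in the face of the data-dependence of $\hat K$ and the $r_k$; it takes the recalibration inequality at face value. So up to and including your reduction to the grid events $\{F(X_{n:r_k})<\ell_k\}$, you match (and make more explicit) what the paper does.

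The gap is in your final paragraph, where you claim the selection problem can be resolved "exactly as in the stepdown proof." The stepdown argument of \cref{thm:Dir-1s-stepdown-FWER} works because, on the benchmark event, the surviving set always \emph{contains} the fixed set $K$ of true nulls, and the benchmark indices $r_{k^*}$ chosen for $K$ are weakly \emph{more} aggressive than the realized ones, so the realized false-rejection event is contained in one fixed event of probability at most $\alpha$. That structure breaks here, by design of the pre-test. If you take as benchmark a superset of $\hat K$ (e.g., $\{1,\dots,n\}$ with $r_k=k$, whose bad event has probability exactly $\alpha$), your own monotonicity statement ("larger surviving set $\Rightarrow$ weakly less aggressive indices") makes the realized $r_k$ weakly \emph{more} aggressive than the benchmark, so the containment runs the wrong way and the benchmark's $\alpha$ does not bound the realized event. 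If instead you take as benchmark the fixed set of true pre-test hypotheses $K^{pre}=\{k: F^{-1}(\ell_k)\le F_0^{-1}(\ell_k)\}$ (which $\hat K$ does contain on $G$), the domination works only for $k\in K^{pre}$; but a true $H_{0\tau}$ can also be falsely rejected through a surviving slack index $k\in\hat K\setminus K^{pre}$ (a true-but-slack main null with $\tau\le\ell_k$), whose triggering event $\{F(X_{n:r_k})<\ell_k\}$ with an aggressively chosen $r_k$ is not covered by the $K^{pre}$ benchmark. Unlike the stepdown, there is no fixed benchmark set that is simultaneously contained in $\hat K$ on $G$ (so monotonicity helps) and large enough to cover every index that can trigger a false rejection, so the analogy does not deliver $\Pr(\{\text{main false rejection}\}\cap G)\le\alpha$ as sketched. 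If you retreat to the paper's level of rigor---treat that $\alpha$ as a worst-case charge justified by the calibration constraint---you recover exactly the paper's proof; the additional argument you propose would be a genuine strengthening, but it needs a different idea than the stepdown induction.
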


The FWER upper bound $\alpha+\alpha_p$ in \cref{thm:Dir-1s-pretest-FWER} is usually far from binding.  
It assumes that a false pre-test rejection always leads to a false rejection, whereas in reality the probability is only somewhat increased. 
Simulations show the FWER level to be much closer to $\alpha$ than $\alpha+\alpha_p$.

\subsection{Additional modifications to improve power}
\label{sec:1s-other}

\paragraph{Shape restrictions}

Shape restrictions may be imposed by making additional rejections (implied by the restrictions) after our initial MTP is run, without affecting the FWER (assuming the shape restrictions are correct).%
\footnote{We thank Matt Webb for this suggestion.} 
For example, consider a prior belief that the hypotheses $H_{0\tau}$ are false in a single, contiguous range of $\tau$ values.%
\footnote{This is implied by $F^{-1}(\cdot)-F_0^{-1}(\cdot)$ being quasiconcave if $H_{0\tau} \colon F^{-1}(\tau) \le F_0^{-1}(\tau)$, or quasiconvex if $\ge$.} 
If the initial MTP rejects the $H_{0\tau}$ in two disjoint ranges of $\tau$, then additionally rejecting all the values in between does not affect FWER if the shape restriction is correct, as seen by considering the following two possibilities. 
The first possibility is that the initial rejections are all correct: then, the additional rejections must also be correct. 
The second possibility is that at least one of the initial rejections is incorrect: then, a familywise error has already been made, so falsely rejecting additional $H_{0\tau}$ does not affect FWER.%
\footnote{This may raise the question whether FWER is the best measure to use, but that is left to other papers.} 

The same arguments apply to imposing shape restrictions to the two-sample MTP in \cref{sec:2s}.

\paragraph{Uneven sensitivity}

Although we have focused on even sensitivity, the Dirichlet framework can be used to direct power in other patterns, too. 
For example, perhaps only the $H_{0\tau}$ for $\tau\in(0,0.5)$ are of interest.%
\footnote{We thank a referee for this point.} 
Power over those $H_{0\tau}$ can be increased by ignoring $\tau\in(0.5,1)$ and determining $\tilde\alpha$ accordingly. 
That is, the equation (like \cref{eqn:Dir-1s-1s-tilde}) that determines $\tilde\alpha$ can be modified to use only the first $n/2$ order statistics. 
More generally, \cref{eqn:def-ell-k-u-k} can be changed to have $\tilde{\alpha}(k)$ depend on the order statistic, $k$, instead of $\tilde{\alpha}(k)=\tilde{\alpha}$ constant for all $k$. 
For example, to shift power toward the lower half of the distribution without ignoring the upper half, one could set $\tilde\alpha(k)=3a$ if $k<n/2$ and $\tilde\alpha(k)=a$ otherwise. 
Then, a simulation can determine the value of $a$ (and thus $\tilde\alpha(k)$) that delivers correct FWER. 
As long as $\tilde\alpha(k)=g(k,a)$ for some chosen function $g(\cdot)$ and scalar $a$, it is easy to solve for the $a$ that yields exact FWER $\alpha$. 

The downsides of such flexibility are that 
1) it takes time to think about the best $\tilde\alpha(k)$, 
2) it takes time to simulate the proper $\tilde\alpha(k)$ since our formula cannot be used, and 
3) it is tempting to snoop around for the $\tilde\alpha(k)$ that provides the most rejections in a particular dataset. 
Consequently, even if even sensitivity is not exactly desired, it may be more convenient and defensible to use our methods as-is anyway.

\section{Two-sample Dirichlet approach}
\label{sec:2s}

\subsection{Main method and results}

Similar to the two-sample KS-based MTP and the two-sample GOF test in \citet[\S5.2]{BujaRolke2006}, our two-sample MTP depends only on the ordering of $X$ and $Y$ observations (see below). 
The difference is which orderings trigger rejections of which hypotheses. 
Compared to the KS-based MTP, as seen in \cref{fig:sim-2s-pt}, our MTP allocates pointwise size (and thus power) more evenly across the distribution. 

Our two-sample MTP differs from the two-sample GOF test in \citet{BujaRolke2006}. 
Like our one-sample MTP, our two-sample MTP determines a particular $\tilde\alpha$ that depends only on the sample sizes and $\alpha$, after which order statistics are compared to different beta distribution quantiles to determine rejection. 
The \citet[\S5.2]{BujaRolke2006} GOF test uses permutations of the observed data. 
Our approach has two advantages. 
First, computationally, our MTP's pointwise $\tilde\alpha$ may be pre-computed (given $\alpha$, $n_X$, and $n_Y$, as we have done in a large reference table), whereas permutations of observed data require just-in-time computation for each new dataset. 
Second, regarding FWER control, it is not clear that the MTP based on the \citet{BujaRolke2006} GOF test satisfies the assumption of \cref{lem:weak-to-strong}; it may still have strong control of FWER, but it would be more difficult to prove. 

\begin{method}\label{meth:Dir-2s}
Using \cref{eqn:def-ell-k-u-k,eqn:def-EDF-2s}, given $\tilde\alpha$, let 
\begin{equation}\label{eqn:def-hat-ell-u-r}
\begin{split}
\hat{\ell}_X(r) &\equiv B_{n_X\hat{F}_X(r), n_X}^{\tilde\alpha} , \quad
\hat{u}_X(r)     \equiv B_{n_X\hat{F}_X(r)+1, n_X}^{1-\tilde\alpha} , \\
\hat{\ell}_Y(r) &\equiv B_{n_Y\hat{F}_Y(r), n_Y}^{  \tilde\alpha} , \quad
\hat{u}_Y(r)     \equiv B_{n_Y\hat{F}_Y(r)+1, n_Y}^{1-\tilde\alpha} , 
\end{split}
\end{equation}
defining $B_{0,n}^{\tilde\alpha}\equiv0$ and $B_{n+1,n}^{\tilde\alpha}\equiv1$ for any $\tilde\alpha$. 
For \cref{task:2s-test-FWER-CDF-2s}, reject $H_{0r} \colon F_X(r) = F_Y(r)$ when either $\hat{\ell}_X(r)>\hat{u}_Y(r)$ or $\hat{\ell}_Y(r)>\hat{u}_X(r)$. 
Using many simulated samples with $X_i\stackrel{iid}{\sim}\UnifDist(0,1)$ for $i=1,\ldots,n_X$ and independent $Y_j\stackrel{iid}{\sim}\UnifDist(0,1)$ for $j=1,\ldots,n_Y$, choose the largest $\tilde\alpha$ such that the (simulated) probability of rejecting any $H_{0r}$ (i.e., FWER) is less than or equal to $\alpha$, and then subtract $0.0001$ to get the final $\tilde\alpha$. 

For \cref{task:2s-test-FWER-CDF-1s}, reject $H_{0r} \colon F_X(r) \le F_Y(r)$ when $\hat{\ell}_X(r)>\hat{u}_Y(r)$, or reject $H_{0r} \colon F_X(r) \ge F_Y(r)$ when $\hat{\ell}_Y(r)>\hat{u}_X(r)$. 
As above, simulate independent standard uniform datasets and choose the largest $\tilde\alpha$ such that the (simulated) probability of rejecting any $H_{0r}$ (i.e., FWER) is less than or equal to $\alpha$, and then subtract $0.0001$ from $\tilde\alpha$. 
\end{method}

The bands $[\hat{\ell}_X(\cdot), \hat{u}_X(\cdot)]$ and $[\hat{\ell}_Y(\cdot), \hat{u}_Y(\cdot)]$ are uniform confidence bands, but they do not have $1-\alpha$ coverage probability: if they did, then the FWER level would be below $\alpha$. 
Instead, $\tilde\alpha$ is chosen to make the FWER level as close to $\alpha$ as possible. 
Plotting the bands shows the results of the MTP: for any $r$ where they do not overlap, $H_{0r}$ is rejected. 
\Cref{fig:emp} in \cref{sec:emp} shows an empirical example of this. 

As seen in \cref{meth:Dir-2s}, whether or not there is at least one $H_{0r}$ rejected (vs.\ zero rejected) 
depends only on the ordering of $X$ and $Y$ values in the sample. 
For example, if $n_X=n_Y=2$, any sample with $X_{2:1}<X_{2:2}<Y_{2:1}<Y_{2:2}$ has the same ordering, XXYY; either all samples with that ordering 
reject at least one $H_{0r}$ (possibly with different $r$), or all samples accept all $H_{0r}$. 
Consequently, there is only a finite number of $\alpha$ (FWER level) that may be achieved exactly. 
Equivalently, the GOF $p$-value distribution is discrete. 
The same issue of a finite number of attainable $\alpha$ applies to the two-sample KS approach since it also depends on (only) the ordering.%
\footnote{Of course, we could propose a randomized MTP that rejects randomly for certain orderings to achieve exact $\alpha$ FWER level, but randomized tests are (appropriately) not popular in practice.} 

Similar to \cref{meth:Dir-1s-1s,meth:Dir-1s-2s}, the key to FWER control for \cref{meth:Dir-2s} is the choice of $\tilde\alpha$. 
\Cref{meth:Dir-2s} chooses $\tilde\alpha$ such that the FWER is no greater than $\alpha$ under the global null $H_0 \colon F_X(\cdot) = F_Y(\cdot)$; i.e., weak control of FWER is ensured by construction. 
Computationally, we now describe two alternative ways to simulate the mapping from $\tilde\alpha$ to $\alpha$ (given $n_X$ and $n_Y$) when $F_X(\cdot)=F_Y(\cdot)$. 

The first strategy for simulating $\alpha$ given $\tilde\alpha$ (and $n_X$ and $n_Y$) employs a convenient, order-preserving transformation. 
From the probability integral transform (and \cref{a:iid}), $F_Y(Y_i)\stackrel{iid}{\sim}\UnifDist(0,1)$, and under $H_0 \colon F_X(\cdot) = F_Y(\cdot)$, then $F_Y(X_i)=F_X(X_i)\stackrel{iid}{\sim}\UnifDist(0,1)$, too. 
From \cref{a:F}, $F_Y(\cdot)$ is order-preserving. 
Thus, we may simulate independent standard uniform samples of sizes $n_X$ and $n_Y$ to compute the FWER of \cref{meth:Dir-2s} given any $\tilde\alpha$, as suggested in \cref{meth:Dir-2s}. 
Since FWER is monotonic in $\tilde\alpha$, which is a scalar, a simple numerical search finds the $\tilde\alpha$ that leads to a desired $\alpha$. 
These simulations may be done ahead of time to generate a reference table of $\tilde\alpha$ values, as we provide along with our code. 

The second strategy for simulating $\alpha$ given $\tilde\alpha$ uses permutations. 
The distribution of $(X_1,\ldots,X_{n_X},Y_1,\ldots,Y_{n_Y})$ under $H_0 \colon F_X(\cdot) = F_Y(\cdot)$ is the same as that of any permutation of that vector, satisfying the ``randomization hypothesis'' in Definition 15.2.1\ of \citet{LehmannRomano2005text}, for example. 
Given this, \citet{BujaRolke2006} propose a GOF test based on permutations of the observed data, implicitly following Theorem 15.2.1\ of \citet{LehmannRomano2005text}.  
Alternatively, we follow Theorem 15.2.2\ of \citet{LehmannRomano2005text} and use the fact that each of the $\binom{n_X+n_Y}{n_X}$ orderings is equally likely under $H_0$. 
This argument is again distribution-free, so our $\tilde\alpha$ is only a function of $\alpha$, $n_X$, and $n_Y$ and can be computed ahead of time.  

The strong control of FWER in \cref{thm:Dir-2s-FWER} comes from the weak control of FWER (by construction) combined with \cref{lem:weak-to-strong}. 
For the one-sided case, as shown formally in the proofs, $F_X(\cdot)=F_Y(\cdot)$ is the least favorable configuration, so 
FWER is only lower for other distributions satisfying $H_0 \colon F_X(\cdot) \le F_Y(\cdot)$. 

\begin{theorem}\label{thm:Dir-2s-FWER}
Under \cref{a:iid,a:F}, given an arbitrarily large number of simulations, \cref{meth:Dir-2s} has strong control of finite-sample FWER at level $\alpha$. 
\end{theorem}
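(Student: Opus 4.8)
The plan is to reduce every part of the statement to \cref{lem:weak-to-strong}, so that the real work is confined to establishing \emph{weak} control of FWER for \cref{meth:Dir-2s}; \cref{lem:weak-to-strong} then upgrades this to strong control. First I would check the hypothesis of \cref{lem:weak-to-strong}: given $\alpha$, $n_X$, and $n_Y$, rejection of $H_{0r}$ depends on the data only through $\hat{F}_X(r)$ and $\hat{F}_Y(r)$. This is immediate from \cref{eqn:def-hat-ell-u-r} and \cref{meth:Dir-2s}, since each of $\hat{\ell}_X(r)$, $\hat{u}_X(r)$, $\hat{\ell}_Y(r)$, $\hat{u}_Y(r)$ is a fixed function of $n_X\hat{F}_X(r)$ or $n_Y\hat{F}_Y(r)$ (and of $\tilde\alpha$, $n_X$, $n_Y$), and the rejection rules only compare these four quantities. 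It then remains to prove weak control in each of the two-sided and one-sided cases.

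\emph{Two-sided case} (\cref{task:2s-test-FWER-CDF-2s}). Weak control is essentially built into the construction. When all $H_{0r}$ are true we have $F_X(\cdot)=F_Y(\cdot)$, the global null, and ``reject any $H_{0r}$'' is precisely the event \cref{meth:Dir-2s} calibrates against. I would first note that this global-null rejection probability is distribution-free: by the probability integral transform (\cref{a:iid,a:F}) and because $F_Y(\cdot)$ is strictly increasing, the ordering of the pooled sample --- equivalently the joint collection $\{n_X\hat{F}_X(r),\,n_Y\hat{F}_Y(r):r\in\R\}$ that alone governs all rejections --- has the same law as for two independent $\UnifDist(0,1)$ samples of sizes $n_X$ and $n_Y$, which is exactly what \cref{meth:Dir-2s} simulates (and what the permutation/randomization argument exploits). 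I would then observe that this global-null FWER, as a function of $\tilde\alpha$, is nondecreasing and takes only finitely many values: increasing $\tilde\alpha$ raises every $\hat{\ell}$ and lowers every $\hat{u}$ by monotonicity of beta quantiles in the probability argument, so for each fixed pooled ordering the indicator ``some $H_{0r}$ is rejected'' is nondecreasing in $\tilde\alpha$, and there are only finitely many orderings. Hence, in the limit of infinitely many simulations, the $\tilde\alpha$ selected by \cref{meth:Dir-2s} (``largest $\tilde\alpha$ with simulated FWER $\le\alpha$'', then minus $0.0001$) yields true global-null FWER $\le\alpha$. This is weak control, and \cref{lem:weak-to-strong} gives strong control.

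\emph{One-sided case} (\cref{task:2s-test-FWER-CDF-1s}, say $H_{0r}\colon F_X(r)\le F_Y(r)$). Now the null region $\{(F_X,F_Y):F_X(\cdot)\le F_Y(\cdot)\}$ strictly contains the calibration configuration $F_X=F_Y$, so I need the least favorable configuration claim that the FWER over this region is maximized at $F_X=F_Y$. I would prove it by coupling. Fix $F_Y$ and the $Y$-sample, and take any $F_X$ with $F_X\le F_Y$ pointwise; then $F_X^{-1}\ge F_Y^{-1}$, so defining $X_i=F_X^{-1}(U_i)$ and $X_i^\circ=F_Y^{-1}(U_i)$ from common uniforms $U_i$ gives $X_i\ge X_i^\circ$ for all $i$, hence $\hat{F}_X(r)\le\hat{F}^\circ(r)$ at every $r$, where $\hat{F}^\circ$ is the empirical CDF of $X_1^\circ,\ldots,X_{n_X}^\circ$. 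Since $\hat{\ell}_X(r)=B_{n_X\hat{F}_X(r),n_X}^{\tilde\alpha}$ is nondecreasing in $n_X\hat{F}_X(r)$ and $\hat{u}_Y(r)$ does not involve the $X$-sample, the event ``$\hat{\ell}_X(r)>\hat{u}_Y(r)$ for some $r$'' under $F_X$ is contained in that same event computed from $X_1^\circ,\ldots,X_{n_X}^\circ$ (i.e.\ under $F_X=F_Y$); averaging over $Y$ and $U$ gives $\FWER(F_X,F_Y)\le\FWER(F_Y,F_Y)$, and the right-hand side is $\le\alpha$ by the distribution-freeness and calibration argument of the two-sided case. This is weak control; \cref{lem:weak-to-strong} gives strong control. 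The direction $H_{0r}\colon F_X(r)\ge F_Y(r)$ is symmetric (reverse the roles of $X$ and $Y$).

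The main obstacle is the one-sided least favorable configuration step: one must verify carefully that the coupling $X_i=F_X^{-1}(U_i)$ really pushes $\hat{F}_X$ downward when $F_X$ drops below $F_Y$, that $\hat{\ell}_X(r)$ moves in the rejection-favoring direction as $\hat{F}_X(r)$ increases, so that the global rejection event is genuinely monotone along the coupling --- and also that the distribution-freeness that validates the simulation-based calibration of $\tilde\alpha$ is undisturbed by conditioning on the $Y$-sample or by the choice of $F_Y$. The two-sided case and the two applications of \cref{lem:weak-to-strong} are comparatively routine. The hypothesis ``given an arbitrarily large number of simulations'' is used only to equate the simulated FWER with its true value and to locate the calibrated $\tilde\alpha$ on the correct value of the finitely-valued, monotone FWER map.
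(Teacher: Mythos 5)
Your proposal is correct and follows essentially the same route as the paper's proof: verify the hypothesis of \cref{lem:weak-to-strong} (rejection of $H_{0r}$ depends only on $\hat{F}_X(r)$ and $\hat{F}_Y(r)$), obtain weak control from the distribution-free calibration under $F_X(\cdot)=F_Y(\cdot)$, handle the one-sided case by showing $F_X(\cdot)=F_Y(\cdot)$ is the least favorable configuration, and then invoke the lemma. Your explicit quantile-function coupling with monotonicity of the beta quantiles is just a more detailed rendering of the paper's probability-integral-transform argument that $F_Y(X_i)\ge F_X(X_i)$ under the one-sided null, and your monotone, finitely-valued FWER map in $\tilde\alpha$ is the paper's appendix discussion of discreteness and the $0.0001$ adjustment.
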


\subsection{Two-sample quantile MTP: difficulties} 

\Cref{thm:Dir-2s-FWER} does not have a corollary for interpreting \cref{meth:Dir-2s} as a quantile MTP for the hypotheses $H_{0\tau} \colon F_X^{-1}(\tau) = F_Y^{-1}(\tau)$.%
\footnote{We thank the referees for pushing us to determine this definitively.} 
In terms of the proof of \cref{thm:Dir-2s-FWER}, the use of \cref{lem:weak-to-strong} would not be valid: rejection of $H_{0\tau}$ depends on order statistics $X_{n_X:k}$ and $Y_{n_Y:m}$ for some $k$ and $m$, but their finite-sample distributions depend on more than just $F_X^{-1}(\tau)$ and $F_Y^{-1}(\tau)$. 
Specifically, from \cref{thm:Wilks}, $F_X(X_{n_X:k})\sim\BetaDist(k,n_X+1-k)$, so the distribution of $X_{n_X:k}$ depends on all of $F_X^{-1}(\cdot)$ in finite samples. 

More simply, a quantile version of \cref{thm:Dir-2s-FWER} for \cref{meth:Dir-2s} cannot be proved because it is false. 
A counterexample shows this. 
Let $X=0.5$ be a degenerate random variable.%
\footnote{Technically, this violates \cref{a:F}, but $X \sim \UnifDist(0.5-e, 0.5+e)$ for arbitrarily small $e>0$ results in FWER that is arbitrarily close to the $e \to 0$ limit. The same comment applies to the mass points in the distribution of $Y$.} 
Let $\Pr(Y=0)=\Pr(Y=1)=0.5-e$ and $\Pr(Y=U)=2e$, for $U \sim \UnifDist(0,1)$ and small $e>0$. 
Thus, among the $H_{0\tau} \colon F_X^{-1}(\tau)=F_Y^{-1}(\tau)$, only the $\tau=0.5$ hypothesis is true. 
Let $n_X=6$ and $n_Y=12$. 
For $\alpha=0.05$, \cref{meth:Dir-2s} has 
$\tilde{\alpha} = \num[round-mode=figures,round-precision=3]{0.15351700}$. 
Thus, 
$B^{\tilde{\alpha}}_{n_X,n_X}>0.5$, 
$B^{1-\tilde{\alpha}}_{1,n_X}<0.5$, 
$B^{\tilde{\alpha}}_{9,n_Y}>0.5$, 
and $B^{1-\tilde{\alpha}}_{4,n_Y}<0.5$, 
so $H_{0,\tau=0.5}$ is rejected if $Y_{n_Y:9}=0$ or $Y_{n_Y:4}=1$. 
As $e\to0$, $\Pr(Y_{n_Y:9}=0)$ is the probability that no more than $n_Y-9=3$ observations have $Y_i=1$, which is the $\mathrm{Binomial}(n_Y,0.5)$ CDF evaluated at $n_Y-9$, which is 
$\num[round-mode=figures,round-precision=3]{0.07299805}$. 
By symmetry, 
$\Pr(Y_{n_Y:9}=0) = \num[round-mode=figures,round-precision=3]{0.07299805}$, too. 
Thus, the quantile FWER is 
$\num[round-mode=figures,round-precision=3]{0.1459961}$, 
slightly below $\tilde{\alpha}$ but well above $\alpha=0.05$. 
If $\alpha=0.01$, $n_X=6$, and $n_Y=11$, then 
$\tilde{\alpha}=\num[round-mode=figures,round-precision=3]{0.07164299}$, 
and similar calculations show 
$\FWER=\num[round-mode=figures,round-precision=3]{0.06542969}$, 
again slightly below $\tilde{\alpha}$ but well above $\alpha$. 

The preceding counterexample's intuition follows. 
For testing $H_{0\tau} \colon F_X^{-1}(\tau)=F_Y^{-1}(\tau)$ for a single $\tau$, using a Dirichlet-based method similar to ours, \citet[\S3.3]{GoldmanKaplan2017b} show the importance of setting $\tilde{\alpha}$ based not only on $\alpha$, $n_X$, and $n_Y$, but also the ratio of the quantile function derivatives at $\tau$, $Q_X'(\tau) / Q_Y'(\tau)$. 
(This relates to the ratio of asymptotic variances of the corresponding sample quantiles.) 
They show that when this ratio equals one, $\tilde{\alpha}$ should be much larger than $\alpha$ to have near-exact size, but when the ratio approaches zero or infinity, $\tilde{\alpha}=\alpha$ is required. 
In \cref{meth:Dir-2s}, $\tilde{\alpha}$ is calibrated to the case when $F_X(\cdot)=F_Y(\cdot)$ and thus implicitly $Q_X'(\tau)/Q_Y'(\tau)=1$ for all $\tau\in(0,1)$. 
In the counterexample, this implicit assumption is violated: $Q_X'(0.5)=0$ and $Q_Y'(0.5) \to \infty$ as $e \to 0$, so $Q_X'(0.5) / Q_Y'(0.5) = 0$. 
Consequently, the rejection probability is closer to $\tilde{\alpha}$ than $\alpha$. 
However, $\tilde{\alpha} \to 0$ as $n_X,n_Y \to \infty$, so $\tilde{\alpha}>\alpha$ only in very small samples. 
This provides a helpful bound on FWER with this particular DGP (and one could thus control FWER by ensuring $\tilde{\alpha} \le \alpha$), but it is unclear whether such a bound applies to more complex DGPs with multiple true $H_{0\tau}$.

Unlike strong control of FWER, weak control of FWER can be established for the quantile hypotheses. 
For two-sided hypotheses, as in \cref{def:FWER-control}, weak control of FWER for the quantile hypotheses $H_{0\tau}$ means $\FWER \le \alpha$ if all $H_{0\tau}$ are true, i.e., if $F_X^{-1}(\tau)=F_Y^{-1}(\tau)$ for all $\tau\in[0,1]$, or more simply if $F_X^{-1}(\cdot)=F_Y^{-1}(\cdot)$. 
Since $F_X^{-1}(\cdot)=F_Y^{-1}(\cdot)$ is equivalent to $F_X(\cdot)=F_Y(\cdot)$, and since \cref{meth:Dir-2s} has $\FWER \le \alpha$ in that case (i.e., has any rejection with less than $\alpha$ probability), then the quantile interpretation would also have $\FWER \le \alpha$ in that case. 

However, a test with only weak control of FWER is in principle no more informative than a GOF test. 
Consider the MTP that rejects $H_{0\tau}$ for all $\tau\in(0,1)$ whenever a level-$\alpha$ GOF test rejects, and otherwise the MTP rejects none of the $H_{0\tau}$. 
Under $F_X^{-1}(\cdot)=F_Y^{-1}(\cdot)$, the GOF test's rejection probability is below $\alpha$, so the MTP's familywise rejection probability (and thus FWER) is also below $\alpha$, satisfying weak control of FWER. 
However, if enough of the $H_{0\tau}$ are false that the GOF test rejects $80\%$ of the time (as it should), then the MTP has $80\%$ FWER since it falsely rejects all the true $H_{0\tau}$ along with the false $H_{0\tau}$. 
Not only does this technically violate strong control of FWER, it makes the MTP's rejection of a particular $H_{0\tau}$ uninformative in practice: we do not know if that $H_{0\tau}$ is actually false, or if it is rejected only because some other $H_{0\tau}$ is false. 
Our own MTP is not nearly so egregious, with only small FWER distortion even in our highly contrived example (and with proper FWER control in many other examples we tried), but we are reluctant to endorse a method that we know lacks strong control of FWER for the foregoing reason. 

If quantiles are truly desired and distributional hypotheses do not suffice, then one-sample uniform confidence bands for the two quantile functions could be combined, but the result will be conservative. 
The two true quantile functions have probability $1-\alpha$ of both lying in their respective $\sqrt{1-\alpha}$ uniform confidence bands, so the quantile difference function $F_Y^{-1}(\cdot)-F_X^{-1}(\cdot)$ has at least $1-\alpha$ probability of lying in the ``difference'' of the two bands. 
Note that our \cref{meth:Dir-2s} MTP is also constructed (implicitly) using uniform confidence bands, but with lower than $1-\alpha$ coverage, whereas $\sqrt{1-\alpha}>1-\alpha$, so this approach is conservative.

\subsection{Two-sample quantile MTP: methods} 
\label{sec:2s-quantile}

As detailed in \cref{sec:2s-power}, 
as an alternative to \cref{meth:Dir-2s}, we propose an MTP for a smaller number of quantile hypotheses. 
Stepdown and pre-test procedures to improve power are also described. 
Although performance in simulations is reasonable, these methods are not as elegant as the one-sample methods. 
They test fewer than $n$ quantile hypotheses, and we provide only heuristic asymptotic justification. 

\section{Extensions}
\label{sec:ext}

In this section, we discuss how to apply our MTPs to regression discontinuity designs and to conditional distributions. 
It may also be possible to extend our approach to consider families of distributions (e.g., Gaussian), regression residuals, and the changes-in-changes model in \citet{AtheyImbens2006}, among other possibilities.

\subsection{Regression discontinuity}
\label{sec:ext-RD}

Our two-sample MTP may be applied to regression discontinuity (RD) designs. 
We show how recent results from \citet{CattaneoEtAl2015}, \citet{ShenZhang2016}, and \citet{CanayKamat2017} justify the use of our MTP in both sharp and fuzzy RD designs. 
\Cref{sec:emp-RD} contains an empirical example. 

Notationally, let $Y_0$ and $Y_1$ denote the untreated and treated potential outcomes, respectively. 
Let $X$ be the running variable, with threshold $x_0$. 
Let $T$ be the treatment dummy, so the observed outcome is $Y = T Y_1 + (1-T) Y_0$. 

Statistically, our MTP considers the family of hypotheses 
\begin{equation}\label{eqn:RD-H0s}
H_{0r} \colon \Delta(r) = 0 , \; r\in\R , \quad
\Delta(r) \equiv 
  \lim_{x\downarrow x_0} F_{Y|X}(r \mid x)
 -\lim_{x  \uparrow x_0} F_{Y|X}(r \mid x) , 
\end{equation}
or the one-sided hypotheses replacing $=$ with $\ge$ or $\le$. 
This $\Delta(\cdot)$ is the ``reduced-form distributional effect'' of \citet{ShenZhang2016}. 

Economically, under certain assumptions, testing \cref{eqn:RD-H0s} corresponds to testing (local) distributional treatment effects for either sharp or fuzzy RD. 
For sharp RD, \citet{ShenZhang2016} provide conditions (Assumptions 1.4 and 2) under which $\Delta(\cdot)$ equals the distributional treatment effect at $X=x_0$, $F_{Y_1|X}(\cdot\mid x_0)-F_{Y_0|X}(\cdot\mid x_0)$. 
For fuzzy RD, they provide conditions (Assumptions 1.1--1.3) under which $\Delta(\cdot)$ has the same sign as the ``local'' (i.e., for compliers) distributional treatment effect at $X=x_0$. 

\Citet{CattaneoEtAl2015} consider when the ``local experiment'' idea of RD can be taken literally in finite samples. 
Given our \cref{eqn:RD-H0s}, instead of using their Assumption 1 (p.\ 4), the most direct way to justify finite-sample inference is to assume there exists $\underline{x}$ such that $F_{Y|X}(\cdot \mid x)=F^{-}_{Y}(\cdot)$ is independent of $x$ over $x\in[-\underline{x},x_0)$, and similarly $F_{Y|X}(\cdot \mid x)=F^{+}_{Y}(\cdot)$ over some $x\in[x_0,\bar{x}]$. 
If we draw an iid sample including some observations with $X_i\in[-\underline{x},x_0)$ and some with $X_i\in[x_0,\bar{x}]$, then our \Cref{a:iid} is satisfied. 
If additionally $F^{-}_{Y}(\cdot)$ and $F^{+}_Y(\cdot)$ satisfy our \Cref{a:F}, then \cref{thm:Dir-2s-FWER} establishes the strong control of finite-sample FWER of our MTP for \cref{eqn:RD-H0s}. 

\Citet{CanayKamat2017} provide weaker conditions that asymptotically justify a permutation test. 
Their Theorem 4.1 also justifies our MTP. 
Imagine ordering the sample observations by their $X_i$ values and looking at the values closest to $x_0$. 
For some $q$, let 
\begin{equation*}
\cdots 
\le X^{-}_{(q)} \le X^{-}_{(q-1)} 
\le \cdots \le X^{-}_{(1)} 
< 0 
\le X^{+}_{(1)} \le X^{+}_{(2)} \le \cdots \le X^{+}_{(q)} 
\end{equation*}
denote the observed $X_i$ just to the left and right of $x_0$. 
Let the corresponding outcomes be $Y^{-}_{[j]}=Y_k$ if $X^{-}_{(j)}=X_k$, and similarly $Y^{+}_{[j]}=Y_k$ if $X^{+}_{(j)}=X_k$. 
Under 
assumptions on continuity and positive probability of $X$ near $x_0$, 
Theorem 4.1 in \citet{CanayKamat2017} states that for a fixed $q$ as $n\to\infty$, the joint distribution of the $Y^{-}_{[j]}$ and $Y^{+}_{[j]}$ is asymptotically equivalent to independent, iid samples from $\lim_{x\uparrow x_0} F_{Y|X}(\cdot \mid x)$ and $\lim_{x\downarrow x_0} F_{Y|X}(\cdot \mid x)$, respectively. 
These are the two distributions defining $\Delta(\cdot)$ in \cref{eqn:RD-H0s}. 

We now formally state the method and asymptotic strong control of FWER. 
\begin{method}\label{meth:RD}
To test the family of hypotheses in \cref{eqn:RD-H0s}, or the corresponding one-sided versions, apply \cref{meth:Dir-2s} to $(Y^{-}_{[1]}, \ldots, Y^{-}_{[q]})$ and $(Y^{+}_{[1]}, \ldots, Y^{+}_{[q]})$. 
\end{method}
\begin{theorem}\label{thm:RD}
Assume 
(i) iid sampling; 
(ii) $\lim_{x\uparrow x_0} F_{Y|X}(\cdot \mid x)$ and $\lim_{x\downarrow x_0} F_{Y|X}(\cdot \mid x)$ satisfy \cref{a:F}; 
(iii) $F_{Y|X}(y \mid x)$ is left-continuous in $x$ over $x\in[x_0-\epsilon,x_0)$ for some $\epsilon>0$; 
(iv) either $\Pr(X=x_0)>0$ or else $F_{Y|X}(y \mid x)$ is right-continuous in $x$ at $x=x_0$ for all $y\in\R$; 
(v) for any $\epsilon>0$, $\Pr(X\in[x_0-\epsilon,x_0))>0$ and $\Pr(X\in[x_0,x_0+\epsilon])>0$. 
Then, as $n\to\infty$, \cref{meth:RD} has strong control of FWER at a level approaching $\alpha$ asymptotically. 
If (iii) and (iv) are replaced by $F_{Y|X}(\cdot \mid x)=F^{-}_{Y}(\cdot)$ over $x\in[-\underline{x},x_0)$ and $F_{Y|X}(\cdot \mid x)=F^{+}_{Y}(\cdot)$ over $x\in[x_0,\bar{x}]$ for some $\underline{x}<x_0\le\bar{x}$, then \cref{meth:RD} has strong control of finite-sample FWER. 
\end{theorem}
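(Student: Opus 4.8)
The plan is to derive both assertions from \cref{thm:Dir-2s-FWER} applied to the two selected outcome subsamples, after matching up the hypotheses. Assign the roles of the ``$X$'' and ``$Y$'' samples in \cref{meth:Dir-2s} to $(Y^{-}_{[1]},\ldots,Y^{-}_{[q]})$ and $(Y^{+}_{[1]},\ldots,Y^{+}_{[q]})$, and write $F^{-}\equiv\lim_{x\uparrow x_0}F_{Y|X}(\cdot\mid x)$ and $F^{+}\equiv\lim_{x\downarrow x_0}F_{Y|X}(\cdot\mid x)$. Since $\Delta(r)=F^{+}(r)-F^{-}(r)$ in \cref{eqn:RD-H0s}, the RD hypothesis $H_{0r}\colon\Delta(r)=0$ (resp.\ $\le0$, $\ge0$) is literally the two-sample CDF hypothesis $H_{0r}\colon F^{-}(r)=F^{+}(r)$ (resp.\ $\ge$, $\le$) of \cref{task:2s-test-FWER-CDF-2s,task:2s-test-FWER-CDF-1s}, and the index set $I$ of true hypotheses is the same object in both descriptions. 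The point worth stressing is that the truth of $H_{0r}$ is defined through the one-sided limits $F^{\pm}$, so $I$ is a fixed (non-random, $n$-free) set throughout.

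For the finite-sample claim under the ``literal local experiment'' hypothesis, condition on the running-variable values $\mathbf{X}=(X_1,\ldots,X_n)$. By \cref{a:iid}, conditionally on $\mathbf{X}$ the $Y_i$ are mutually independent with $Y_i\sim F_{Y|X}(\cdot\mid X_i)$; on the event that the $q$ selected observations on each side lie in the regions of constancy (automatic if the support of $X$ is contained in $[-\underline{x},\bar{x}]$, and otherwise an event one may condition on), the selected left outcomes are then iid $F^{-}$, the selected right outcomes iid $F^{+}$, the two subsamples are conditionally independent, and the selection---a function of $\mathbf{X}$ alone---does not disturb this. Thus, conditionally on $\mathbf{X}$, we are exactly in the two-sample setting of \cref{a:iid,a:F} with marginals $F^{-},F^{+}$ (continuous and strictly increasing by hypothesis (ii)), so \cref{thm:Dir-2s-FWER} gives conditional $\FWER\le\alpha$; taking the expectation over $\mathbf{X}$ gives strong control of finite-sample FWER.

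For the asymptotic claim, I would invoke Theorem~4.1 of \citet{CanayKamat2017}, already stated in the text: under (i)--(v), for fixed $q$ the joint law of $(Y^{-}_{[1]},\ldots,Y^{-}_{[q]},Y^{+}_{[1]},\ldots,Y^{+}_{[q]})$ converges in distribution, as $n\to\infty$, to that of two independent samples of size $q$ that are iid from $F^{-}$ and $F^{+}$ respectively. The familywise-error event of \cref{meth:Dir-2s}---``some $H_{0r}$ with $r\in I$ is rejected''---is a fixed measurable function of this $2q$-vector, because rejection of each $H_{0r}$ depends on the data only through the two empirical CDFs evaluated at $r$. In the limiting model the data \emph{are} two independent iid samples from $F^{-},F^{+}$ and $I$ \emph{is} the set of true hypotheses, so the limiting FWER is $\le\alpha$ by \cref{thm:Dir-2s-FWER}; by the Portmanteau theorem it then remains only to check that this event is a continuity set of the limiting law, and this is the step I expect to be the main obstacle. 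The rejection decision flips only when two combined order statistics coincide, or when a combined order statistic---which serves as an endpoint of an interval of rejected $r$-values---lands on the boundary of the fixed set $I$; since $F^{-}$ and $F^{+}$ are continuous by (ii) and there are only finitely many order statistics, each such configuration has probability zero under the limit, so the event is indeed a continuity set. Granting this, $\FWER_n\to\FWER_{\infty}\le\alpha$, i.e.\ asymptotic strong control at level $\alpha$.
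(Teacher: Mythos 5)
Your proposal is correct and follows essentially the same route as the paper's proof: invoke Theorem 4.1 of \citet{CanayKamat2017} for the joint convergence of the two local outcome vectors to independent iid samples from $F^{-}$ and $F^{+}$, apply the portmanteau lemma with $B$ equal to the rejection region for the true $H_{0r}$, and bound the limiting probability by $\alpha$ via the finite-sample result in \cref{thm:Dir-2s-FWER}. You in fact supply two details the paper leaves implicit---an explicit verification that the rejection event is a continuity set of the limit law (using continuity of $F^{\pm}$ and the finitely many order statistics), and a conditioning-on-$\mathbf{X}$ argument for the finite-sample claim, which the paper only discusses informally in \cref{sec:ext-RD}---so no gap.
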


We leave optimal selection of $q$ to future work. 
Smaller $q$ is better for FWER control but worse for power. 
For now, other proposals could be used. 
For example, \citet[\S3]{CattaneoEtAl2015} provide a procedure to determine the largest neighborhood around $x_0$ where the local experiment assumption seems plausible, based on covariate balance. 

\subsection{Conditional distributions}
\label{sec:ext-cond}


Both our one-sample and two-sample MTPs may be applied to conditional distributions.%
\footnote{We thank a referee for this suggestion.}
We focus on the two-sample, two-sided case here. 
It is often of interest to determine where two conditional distributions differ, such as income distributions conditional on different demographic and other characteristics. 
Under conditional independence \citep[e.g.,][Assumption 1, p.\ 545]{MaCurdyEtAl2011}, such a comparison identifies conditional distributional treatment effects. 

Let $Y$ denote the outcome (like income), $\vecf{X}$ a conditioning vector (like education, age, etc.), and $T$ a binary variable (like a dummy for being male). 
The distributional hypotheses are 
\begin{equation}\label{eqn:cond-H0}
H_{0r} \colon F_{Y|\vecf{X},T}(r \mid \vecf{x}_0 , 0) = F_{Y|\vecf{X},T}(r \mid \vecf{x}_0 , 1) , 
\end{equation}
comparing the distribution of $Y$ given $(\vecf{X}=\vecf{x}_0, T=0)$ to the distribution of $Y$ given $(\vecf{X}=\vecf{x}_0, T=1)$. 

If $\vecf{X}$ contains only discrete variables, 
then our MTPs can apply immediately. 
Assuming the sample contains some observations with 
$(\vecf{X}_i,T_i)=(\vecf{x}_0,0)$ and some with 
$(\vecf{X}_i,T_i)=(\vecf{x}_0,1)$, the MTP can use the corresponding $Y_i$ as the two respective (local) samples. 

If $\vecf{X}$ contains any continuous variables, then $\Pr(\vecf{X}=\vecf{x})=0$ for any $\vecf{x}$, so we must smooth over observations with $\vecf{X}_i$ near (but not equal to) $\vecf{x}_0$. 
First, among observations with $T_i=0$, order the $\vecf{X}_i$ by their distance from $\vecf{x}_0$ according to some norm, $\norm{\cdot}$. 
Specifically, let $\vecf{X}_{(k),T=0}$ satisfy $\norm{\vecf{X}_{(1),T=0}-\vecf{x}_0} \le \norm{\vecf{X}_{(2),T=0}-\vecf{x}_0} \le \cdots$, with each $\vecf{X}_{(k),T=0}=\vecf{X}_i$ for some $i$ where $T_i=0$ (with each $i$ used exactly once). 
Define $\vecf{X}_{(k),T=1}$ similarly for the observations with $T_i=1$. 
If the norm is chosen to make $\norm{\vecf{x}}$ extremely large when the discrete components of $\vecf{x}$ are non-zero, then the smallest values of $\norm{\vecf{X}_i-\vecf{x}_0}$ will have the discrete components of $\vecf{X}_i$ exactly equal those of $\vecf{x}_0$. 
Second, let $Y_{[k],T=0}$ and $Y_{[k],T=1}$ denote the corresponding outcomes: $Y_{[k],T=0}=Y_i$ iff $\vecf{X}_{(k),T=0}=\vecf{X}_i$, and similarly for $Y_{[k],T=1}$. 
Third, apply our two-sample MTP to the local samples 
\begin{equation}\label{eqn:CQD-samples}
\vecf{Y}_{n,T=0} \equiv ( Y_{[1],T=0}, \ldots, Y_{[q_0],T=0} ) , \quad
\vecf{Y}_{n,T=1} \equiv ( Y_{[1],T=1}, \ldots, Y_{[q_1],T=1} ) . 
\end{equation}
We leave optimal selection of $q_1$ and $q_0$ to future work; as in \cref{sec:ext-RD}, smaller $q$ is better for FWER but worse for power. 

\begin{method}\label{meth:CQD}
Let $H_{0r} \colon F_{Y|\vecf{X},T}(r \mid \vecf{x}_0, 0) = F_{Y|\vecf{X},T}(r \mid \vecf{x}_0, 1)$, $r\in\R$. 
Given $q_0$, $q_1$, and $\norm{\cdot}$, determine local samples $\vecf{Y}_{n,T=0}$ and $\vecf{Y}_{n,T=1}$ as in \cref{eqn:CQD-samples}. 
Apply \cref{meth:Dir-2s} to $\vecf{Y}_{n,T=0}$ and $\vecf{Y}_{n,T=1}$. 
\end{method}

Theoretically, as in \cref{sec:ext-RD}, 
asymptotic justification of \cref{meth:CQD} 
comes from Theorem 4.1 of \citet{CanayKamat2017}. 
Their result implies that as $n\to\infty$, the local samples in \cref{eqn:CQD-samples} converge in distribution to independent, iid samples from the conditional distributions of interest, $F_{Y|\vecf{X},T}(\cdot \mid \vecf{x}_0 , 0)$ and $F_{Y|\vecf{X},T}(\cdot \mid \vecf{x}_0 , 1)$. 
\begin{theorem}\label{thm:CQD}
Let $\vecf{X}=(\vecf{X}^D,\vecf{X}^C)$ and $\vecf{x}_0=(\vecf{x}^D_0,\vecf{x}^C_0)$, where $\vecf{X}^D$ contains $d_D$ discrete random variables satisfying $\Pr(\vecf{X}^D=\vecf{x}^D_0)>0$, and $\vecf{X}^C$ contains $d_C$ continuous (or mixed) random variables, so $\dim(\vecf{X})=d\equiv d_D+d_C$. 
Assume for both $t=0$ and $t=1$: 
(i) $F_{Y|\vecf{X}^C,\vecf{X}^D,T}(y \mid \vecf{x}^C, \vecf{x}^D, t)$ is continuous in $\vecf{x}^C$ at $(\vecf{x}^C,\vecf{x}^D)=(\vecf{x}^C_0,\vecf{x}^D_0)$ for all $y$ in the support of $Y$; 
(ii) $\Pr\bigl( (\vecf{X}^C - \vecf{x}^C_0) \in [-\epsilon,\epsilon]^{d_C} \mid \vecf{X}^D=\vecf{x}^D_0, T=t \bigr) > 0$ for all $\epsilon>0$. 
Further assume: 
(iii) iid sampling; 
(iv) $F_{Y|\vecf{X}^C,\vecf{X}^D,T}(y \mid \vecf{x}^C_0, \vecf{x}^D_0, 0)$ and $F_{Y|\vecf{X}^C,\vecf{X}^D,T}(y \mid \vecf{x}^C_0, \vecf{x}^D_0, 1)$ satisfy \cref{a:F}. 
Then, as $n\to\infty$, \cref{meth:CQD} has strong control of FWER at a level approaching $\alpha$ asymptotically. 
If $d_C=0$, then \cref{meth:CQD} has strong control of FWER in finite samples. 
\end{theorem}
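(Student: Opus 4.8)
The plan is to mirror the proof of \cref{thm:RD}: reduce \cref{meth:CQD} to an application of \cref{meth:Dir-2s} to a pair of (asymptotically) independent iid samples from the two target conditional distributions $F^{0}(\cdot)\equiv F_{Y|\vecf{X},T}(\cdot\mid\vecf{x}_0,0)$ and $F^{1}(\cdot)\equiv F_{Y|\vecf{X},T}(\cdot\mid\vecf{x}_0,1)$, and then invoke the finite-sample strong control of FWER in \cref{thm:Dir-2s-FWER}.

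First I would dispose of the discrete coordinates. Because $\norm{\cdot}$ is built so that any mismatch in a component of $\vecf{X}^D$ inflates $\norm{\vecf{X}_i-\vecf{x}_0}$, and because $\Pr(\vecf{X}^D=\vecf{x}^D_0)>0$ with positive local conditional mass near $\vecf{x}^C_0$ for both $T=0$ and $T=1$ (assumption (ii)), the count of observations having $\vecf{X}_i^D=\vecf{x}^D_0$, $T_i=t$, and $\vecf{X}^C_i$ in any fixed neighborhood of $\vecf{x}^C_0$ diverges; hence with probability tending to one all $q_0+q_1$ selected observations satisfy $\vecf{X}_i^D=\vecf{x}^D_0$. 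On that event the problem reduces to one with only continuous conditioning variables, in which $\vecf{Y}_{n,T=t}$ of \cref{eqn:CQD-samples} collects the $q_t$ outcomes whose $\vecf{X}^C_i$ are $\norm{\cdot}$-nearest to $\vecf{x}^C_0$ among $\{i:\vecf{X}_i^D=\vecf{x}^D_0,\ T_i=t\}$.

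Second I would prove the weak convergence $(\vecf{Y}_{n,T=0},\vecf{Y}_{n,T=1})\dconv(\vecf{Y}^{0},\vecf{Y}^{1})$, where $\vecf{Y}^{0}$ and $\vecf{Y}^{1}$ are independent iid samples of the fixed sizes $q_0$ and $q_1$ from $F^{0}$ and $F^{1}$. This is a multivariate nearest-neighbor analogue of Theorem 4.1 of \citet{CanayKamat2017}: the $\vecf{X}^C$ values of the $q_t$ nearest neighbors converge to $\vecf{x}^C_0$ in probability (positive local mass, (ii)), and continuity of $F_{Y|\vecf{X}^C,\vecf{X}^D,T}(\cdot\mid\cdot,\vecf{x}^D_0,t)$ at $\vecf{x}^C_0$ (assumption (i)) then pins down the limiting conditional laws of the attached outcomes, with independence across $T$ and across the $q_t$ draws inherited from iid sampling ((iii)). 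Checking that the scalar-running-variable statement of \citet{CanayKamat2017} carries over to nearest-neighbor matching on a vector $\vecf{X}^C$---or, equivalently, establishing this convergence directly from (i)--(ii)---is the one genuinely new ingredient, and I expect it to be the main obstacle, though it is a routine nearest-neighbor argument once (i)--(ii) are assumed.

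Third I would transfer FWER control through the limit. Fix any $I\subseteq\R$ on which $H_{0r}$ is true, i.e.\ $F^{0}(r)=F^{1}(r)$ for $r\in I$. Since $q_0$ and $q_1$ are fixed, \cref{meth:Dir-2s} uses a fixed $\tilde\alpha$, so ``at least one $H_{0r}$ with $r\in I$ is rejected'' corresponds to a fixed measurable set $R\subseteq\R^{q_0+q_1}$ whose topological boundary lies in the set of vectors with ties; by \cref{a:F} applied to $F^{0},F^{1}$ (assumption (iv)) that boundary has probability zero under the law of $(\vecf{Y}^{0},\vecf{Y}^{1})$. Writing $R_n$ for the corresponding rejection event of \cref{meth:CQD} at sample size $n$ and $R_\infty$ for that of \cref{meth:Dir-2s} applied to $(\vecf{Y}^{0},\vecf{Y}^{1})$, the portmanteau theorem together with step two gives
\[ \limsup_{n\to\infty}\Pr(R_n)=\Pr(R_\infty)\le\alpha , \]
where the inequality is \cref{thm:Dir-2s-FWER}'s strong control of finite-sample FWER for iid samples from $F^{0},F^{1}$ (which satisfy \cref{a:iid,a:F}), given an arbitrarily large number of simulations. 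Since this holds for every $I$ and the limit depends only on the fixed $q_0,q_1,F^{0},F^{1}$, \cref{meth:CQD} has strong control of FWER at a level approaching $\alpha$. When $d_C=0$ there is no smoothing: conditional on the positive-probability event that at least $q_0$ observations have $(\vecf{X}_i^D,T_i)=(\vecf{x}^D_0,0)$ and at least $q_1$ have $(\vecf{X}_i^D,T_i)=(\vecf{x}^D_0,1)$, the selected outcomes are---by \cref{a:iid} and the definition of the conditional distribution---iid samples of sizes $q_0,q_1$ from $F^{0},F^{1}$, with the ordering among exact ties immaterial to \cref{meth:Dir-2s} under \cref{a:F}, so \cref{thm:Dir-2s-FWER} gives strong control of finite-sample FWER directly.
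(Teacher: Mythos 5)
Your proposal is correct and follows essentially the same route as the paper's proof: reduce to the continuous covariates using $\Pr(\vecf{X}^D=\vecf{x}^D_0)>0$, invoke the (vector-covariate) extension of Theorem 4.1 of \citet{CanayKamat2017} to get joint convergence of the local samples to independent iid draws from the two target conditional distributions, and then combine the portmanteau lemma with the finite-sample strong FWER control of \cref{thm:Dir-2s-FWER} applied to the limit. If anything, you are slightly more careful than the paper in justifying that the rejection region is a continuity set of the limit law and in spelling out the $d_C=0$ finite-sample case, but the substance of the argument is the same.
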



\section{Empirical examples}
\label{sec:emp}

\subsection{Gift wage experiment}
\label{sec:emp-gift}

We revisit data from \citet[Tables I and V]{GneezyList2006}.
Results may be replicated using code from the latter author's website. 
The global $p$-values are from the method proposed by \citet{BujaRolke2006}, implemented in our code with our faster computation. 

The experiment of \citet{GneezyList2006} pays control group individuals an advertised hourly wage and treatment group individuals an unexpectedly larger ``gift'' wage upon arrival.  
The ``gift exchange'' question is whether the higher wage induces higher effort in return.  
The experiment is run separately for library data entry and door-to-door fundraising tasks.  
The sample sizes are small: $10$ and $9$ for control and treatment (respectively) for the library task, and $10$ and $13$ for fundraising. 
With small samples, our methods' finite-sample FWER control is especially desirable. 

The main finding of \citet{GneezyList2006} is that the ``gift wage'' treatment raises productivity significantly in the first time period but not thereafter.  
Complementing the original results, we examine heterogeneity in the period 1 treatment effect, testing across the productivity distribution.  

\Cref{fig:emp} shows the two-sided bands used by \cref{meth:Dir-2s}, $[\hat{\ell}_X(\cdot),\hat{u}_X(\cdot)]$ and $[\hat{\ell}_Y(\cdot),\hat{u}_Y(\cdot)]$. 
Wherever the bands do not overlap, the pointwise null hypothesis $H_{0r} \colon F_X(r) = F_Y(r)$ is rejected. 
With such small sample sizes, discreteness precludes an exact $10\%$ FWER level, so exact $8.5\%$ (library) and $9.3\%$ (fundraising) FWER levels are used instead. 

For the library task, with $8.5\%$ FWER level, our MTP does not reject equality of the treatment and control productivity CDFs at any point.  
However, there is almost a rejection near 56--58 books, around the upper quartile; increasing the FWER level to $14\%$ triggers rejection here.  
With one-sided global testing, i.e., testing first-order stochastic dominance, the Dirichlet test cannot reject that the treatment distribution dominates the control distribution ($p=0.996$), whereas it does reject at a $10\%$ level that the control distribution dominates the treatment distribution ($p=0.076$) because of the pointwise rejection near the $0.8$-quantile. 
In contrast, the KS test fails to reject at a $10\%$ level: its one-sided $p$-value is $0.13$. 

\begin{figure}[htbp]
\centering
\hfill
\includegraphics[width=0.45\textwidth,clip=true,trim=5 25 10 10]{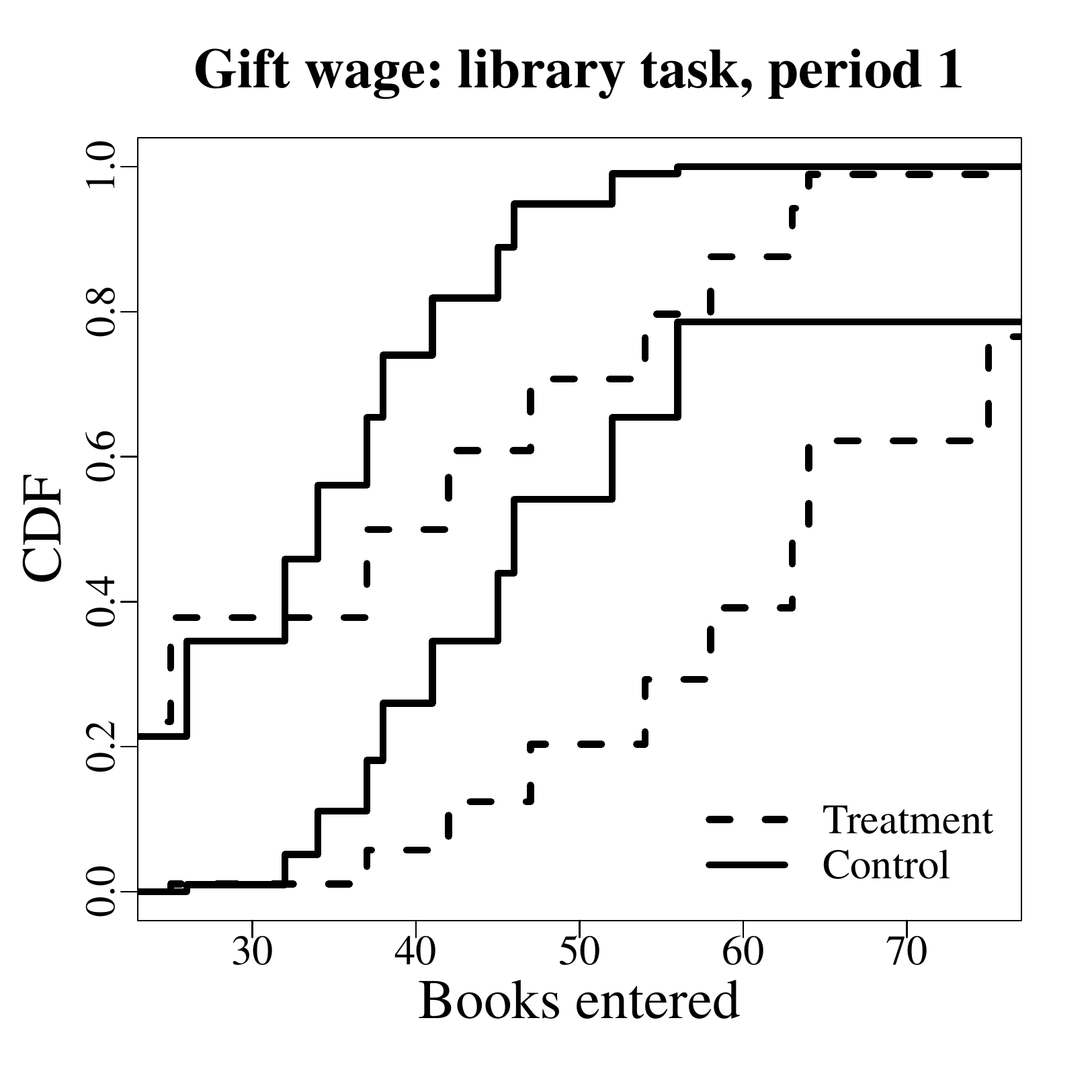}%
\hfill%
\includegraphics[width=0.45\textwidth,clip=true,trim=5 25 10 10]{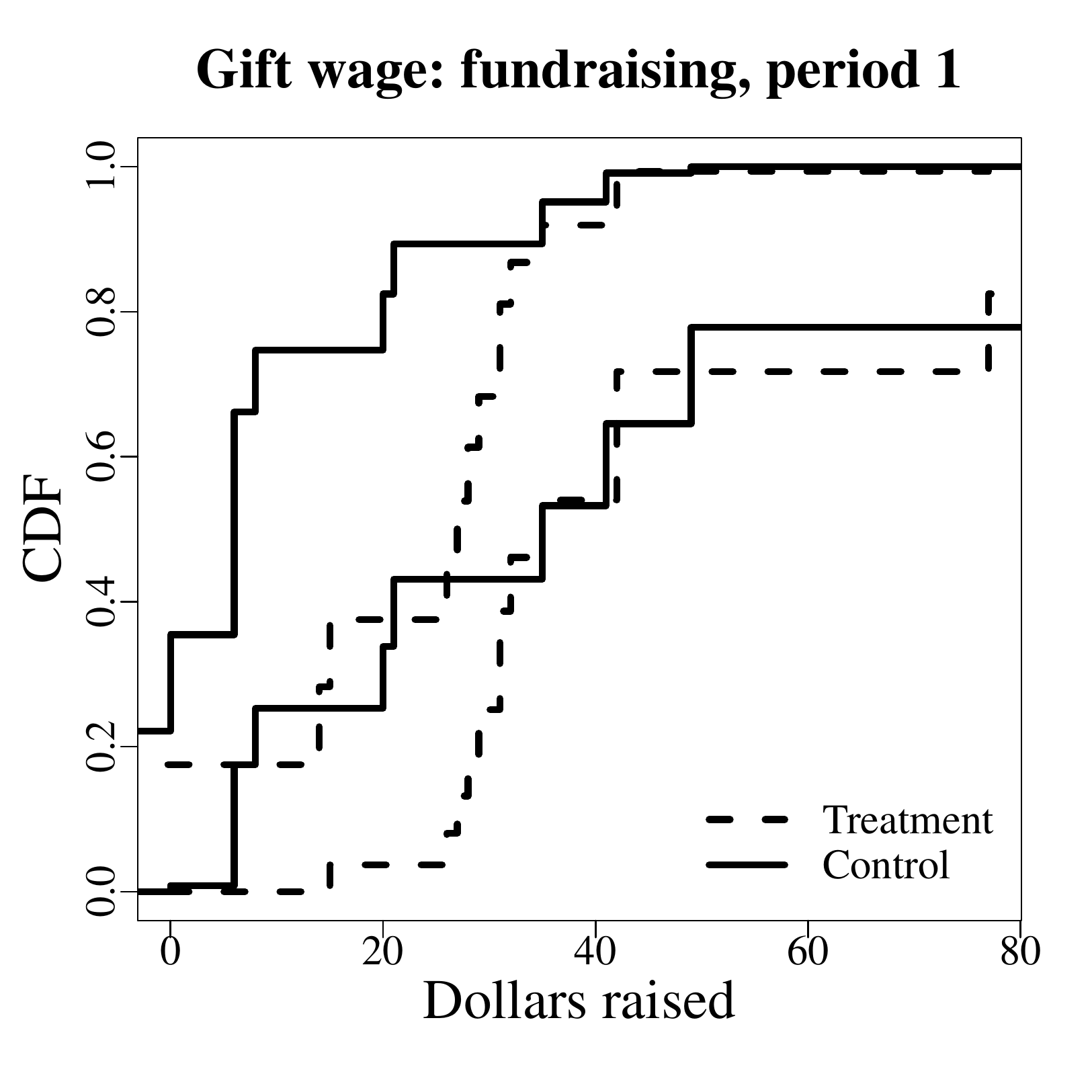}%
\hfill\null
\caption{\label{fig:emp}Comparison of treatment and control group productivity in the first period of the library (left) and fundraising (right) tasks in \citet{GneezyList2006}: bands for two-sided MTP (rejecting wherever there is no overlap) with exact FWER levels $8.5\%$ (left) and $9.3\%$ (right).}
\end{figure}

For the fundraising task, with $9.3\%$ FWER level, \cref{fig:emp} shows two ranges near the lower quartile of the distributions where a zero treatment effect is rejected: 8--14 and 21--26 dollars raised. 
%
Opposite the library task, where the upper part of the distribution showed the most significant (statistically and economically) treatment effect, the gift wage treatment most affects the lower part of the fundraising distribution. 
Testing first-order stochastic dominance, the Dirichlet test cannot reject that the treatment dominates the control distribution ($p=1$), whereas it can reject at a $5\%$ level that the control distribution dominates the treatment distribution ($p=0.021$).  
The one-sided KS $p=0.034$: higher, but still below $0.05$. 
For two-sided testing of zero treatment effect, the Dirichlet test rejects at a $5\%$ level while the KS cannot: the Dirichlet $p=0.041$, while the KS $p=0.069$. 

\begin{table}[htbp]
\centering
\begin{threeparttable}
\caption{\label{tab:emp}Intervals of $r$ for which $H_{0r}$ is rejected.}
\begin{tabular}{lrccrcc}
\toprule
 && \multicolumn{2}{c}{Library} 
 && \multicolumn{2}{c}{Fundraising} \\
\cmidrule{3-4}\cmidrule{6-7}
Method && 2-sided & 1-sided 
       && 2-sided & 1-sided \\
\midrule
\multicolumn{7}{l}{\textit{FWER level: $\alpha=0.05$}} \\
Dirichlet && none & none 
          && $(8,14)$ & $(8,14)\cup(21,26)$ \\
KS-based  && none & none 
          && none & $(21,26)$ \\[6pt]
\multicolumn{7}{l}{\textit{FWER level: $\alpha=0.1$}} \\
Dirichlet && none & $(56,58)$ 
          && $(8,14)\cup(21,26)$ & $(6,15)\cup(20,27)$ \\
KS-based  && none & none 
          && $(21,26)$ & $(8,13)\cup(21,26)$ \\
\bottomrule
\end{tabular}
\begin{tablenotes}
\item \textbf{Note:} Units are books entered (library) or dollars raised (fundraising). 
With $F_T(\cdot)$ the treated population CDF and $F_C(\cdot)$ the control CDF, ``2-sided'' means $H_{0r} \colon F_T(r) = F_C(r)$, and ``1-sided'' means $H_{0r} \colon F_T(r) \ge F_C(r)$.
\end{tablenotes}
\end{threeparttable}
\end{table}

\Cref{tab:emp} summarizes the results from running the Dirichlet and KS-based MTPs. 
In addition to the Dirichlet rejecting some $H_{0r}$ in many cases where the KS-based MTP cannot, the Dirichlet MTP rejects more $H_{0r}$ in cases where both MTPs reject at some $r$. 
The economic interpretation of the rejection of $H_{0r}$ for $r\in(56,58)$ for the library task (one-sided, $\alpha=0.1$) can be expressed as: 
 the data suggest that the gift wage treatment increases the probability of an individual entering at least $57$ books. 

As discussed in \cref{sec:1s-other}, a prior belief that the gift wage treatment should only affect a single, contiguous range of $r$ values would lead to additional rejections in the fundraising data. 
The two-sided $H_{0r}$ for all $r\in(8,26)$ would be rejected, and similarly $r\in(8,26)$ or $r\in(6,27)$ for one-sided $H_{0r}$ with $\alpha=0.05$ or $\alpha=0.1$, respectively.

\subsection{Regression discontinuity}
\label{sec:emp-RD}



Applying our MTP to a regression discontinuity (RD) design, we study the incumbency advantage in the U.S.\ Senate using the same data from \citet{CattaneoEtAl2015}.%
\footnote{\url{https://sites.google.com/site/rdpackages/rdlocrand/r/rdlocrand_senate.csv}} 
The running variable $Z$ measures the Democratic candidate's margin of victory in an election, in percentage points; it is negative if the Democrat loses to the Republican. 
The outcome $Y$ is the vote share of the Democratic candidate in the next election for the same Senate seat, six years later. 
This is a sharp RD where the incumbency ``treatment'' dummy is simply $D=\Ind{ Z>0 }$. 
For other data details, see \citet{CattaneoEtAl2015}. 

The economic question is how incumbency (i.e., currently holding the seat) affects voting. 
\Citet{CattaneoEtAl2015} argue that for close elections, the results (and thus the incumbent in the next election) are as good as random. 
To quantify ``close,'' \citet{CattaneoEtAl2015} apply their window selection procedure (based on tests of covariate balance) to get a percentage Democratic margin of victory of $-0.75 \le Z \le 0.75$. 
We take their suggestion and apply our two-sample Dirichlet MTP (\cref{meth:Dir-2s}) to the ``control'' group outcomes $\{ Y_i : -0.75 \le Z_i < 0\}$ and the ``treatment'' group outcomes $\{ Y_i : 0 < Z_i < 0.75 \}$. 

\begin{figure}[htbp]
\centering
\includegraphics[width=0.55\textwidth,clip=true,trim=20 10 5 85]{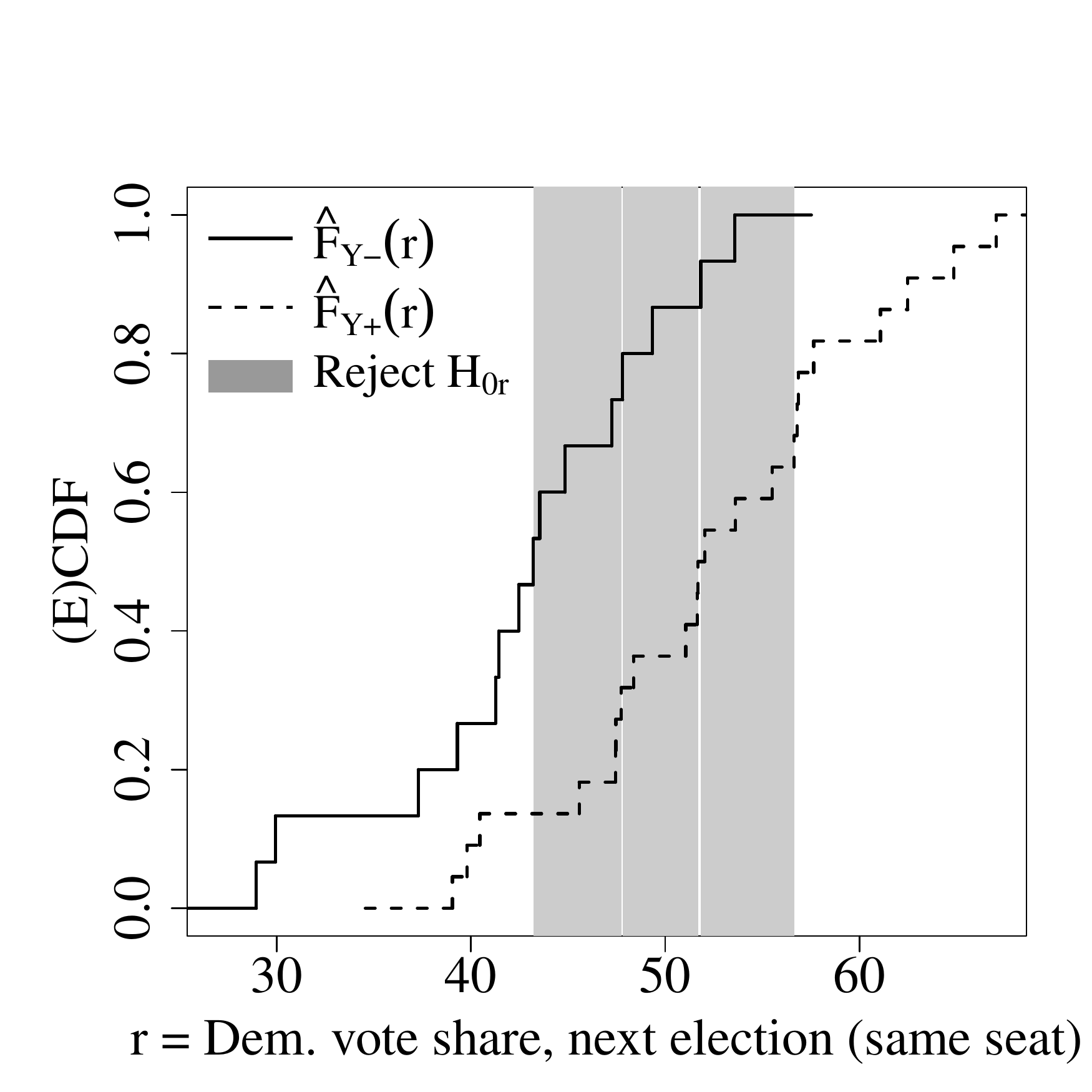}%
\caption{\label{fig:emp-RD}Application of Dirichlet MTP to U.S.\ Senate elections data; one-sided, nominal $5\%$ FWER level. In the legend, $\hat{F}_{Y-}(\cdot)$ is the empirical CDF of $\{ Y_i : -0.75 \le Z_i < 0\}$, while $\hat{F}_{Y+}(\cdot)$ is the empirical CDF of $\{ Y_i : 0 < Z_i < 0.75 \}$.}
\end{figure}

\Cref{fig:emp-RD} show the results. 
Like \citet{CattaneoEtAl2015}, we find a very low $p$-value for the global null hypothesis that the control and treatment distributions are identical. 
Unlike the permutation test in \citet{CattaneoEtAl2015}, our MTP assesses each pointwise $H_{0r}$.%
\footnote{This can be done with a permutation test only by invoking additional strong assumptions like a constant treatment effect, as in Assumption 3 of \citet{CattaneoEtAl2015}. Section 2.3 of \citet{CattaneoEtAl2015} also discusses quantile treatment effect confidence intervals, but they are based on intervals for the quantiles of the marginal distributions, which makes them conservative, and they are only pointwise for individual $\tau$, not the full distribution.} 
With one-sided FWER level $\alpha=0.05$, our MTP rejects $H_{0r}$ for almost all $r\in[43.2, 56.6]$ (percent). 
Beyond just saying that incumbency affects some (unspecified) part of the vote share distribution, the MTP results show statistical significance for the beneficial incumbency effect across most of the distribution, implying at least a restricted first-order stochastic dominance relationship. 


\section{Simulations}
\label{sec:sim}

All simulations may be replicated with code from the latter author's website.  
For comparison with our Dirichlet methods, we use KS-based MTPs as described in \cref{sec:KS}. 
The unweighted KS-based MTP uses the KS implementation \texttt{ks.test} in the \texttt{stats} package in R \citep{R.core}. 
For the weighted KS, asymptotic critical values from \citet{Jaeschke1979} and \citet{ChicheporticheBouchaud2012} were inaccurate,\footnote{\citet[p.\ 108]{Jaeschke1979} appropriately warns, ``Since\ldots the rate of convergence\ldots is very slow, we would not encourage anyone to use the confidence intervals based on the asymptotic analysis.''} 
so we simulate exact critical values.  
However, this simulation is time-consuming, which is a practical disadvantage. 

Earlier, \cref{fig:sim-1s-pt,fig:sim-2s-pt} showed simulation results on the uneven sensitivity of weighted and unweighted KS-based MTPs and the (relatively) even sensitivity of the Dirichlet MTPs, in terms of pointwise type I error rates. 
Intuitively, those differences translate into differences in pointwise power, as shown in \cref{sec:sim-pt-pwr}. 
The Dirichlet's more even sensitivity also achieves generally better global power than the (GOF) KS test. 
This is illustrated in \cref{sec:sim-pt-pwr}, as well as in Table 1 and Figure 8 of \citet{AldorNoimanEtAl2013}, who also show a power advantage over the Anderson--Darling (i.e., weighted Cram\'er--von Mises) test for a variety of distributions. 
That is, there is not a tradeoff between even sensitivity and global power; the Dirichlet approach has both more even sensitivity and better global power. 

In this section, we focus on our methods' strong control of FWER, the power improvements of stepdown and pre-test procedures, and the computational benefit of \cref{fact:alpha-tilde-rate}.

\subsection{FWER}\label{sec:sim-FWER}

\Cref{tab:sim-1s-overall} shows weak control of FWER for one-sample, two-sided MTPs, i.e., it shows FWER simulated under $F(\cdot)=F_0(\cdot)$. 
Since all MTPs considered are distribution-free under \cref{a:iid,a:F}, the DGP is $X_i\stackrel{iid}{\sim}\UnifDist(0,1)$.  
For our \cref{meth:Dir-1s-2s} (``Dirichlet''), this is exact by construction, up to the approximation error in \cref{fact:alpha-tilde-rate}.  
This error is negligible in \cref{tab:sim-1s-overall}.  
Earlier, \cref{fig:approx-err-hists} showed simulated FWER for additional $n$ and nominal $\alpha$ when using \cref{fact:alpha-tilde-rate}. 

\begin{table}[htbp]
\centering
\begin{threeparttable}
\caption{\label{tab:sim-1s-overall}Simulated FWER, one-sample, two-sided.}
\begin{tabular}{cccccc}
\toprule
$\alpha$ & $n$ & Dirichlet & KS & KS (exact) & weighted KS (exact) \\
\midrule
0.10 & \phantom{0}20 & 0.101 & 0.100 & 0.100 & 0.099 \\
0.10 & 100           & 0.101 & 0.094 & 0.100 & 0.098 \\
0.05 & \phantom{0}20 & 0.050 & 0.050 & 0.050 & 0.053 \\
0.05 & 100           & 0.050 & 0.045 & 0.050 & 0.049 \\
\bottomrule
\end{tabular}
\begin{tablenotes}
\item \textbf{Note:} $F(\cdot)=F_0(\cdot)$, $10^6$ replications. 
\end{tablenotes}
\end{threeparttable}
\end{table}

\Cref{tab:sim-1s-FWER} shows strong control of FWER for one-sample, one-sided MTPs: the basic Dirichlet test in \cref{meth:Dir-1s-1s}, as well as the stepdown and pre-test procedures in \cref{meth:Dir-1s-1s-stepdown,meth:Dir-1s-pretest}, respectively.  
The null distribution $F_0$ is $\UnifDist(-1,1)$ and $H_{0\tau} \colon F^{-1}(\tau) \le F_0^{-1}(\tau)$. 
\Cref{fig:H1s} shows $F_0^{-1}(\cdot)$ and $F^{-1}(\cdot)$ for each row in \cref{tab:sim-1s-FWER}. 
The Dirichlet MTP in \cref{meth:Dir-1s-1s} always controls FWER, but well below the required level $\alpha=0.1$ when $F(\cdot)\ne F_0(\cdot)$.  
The FWERs for the stepdown method and combined pre-test/stepdown method are higher but still below $\alpha=0.1$, as desired. 
Of course, all else equal, a higher error rate is never desired, but (as seen later) there is a corresponding gain in power, so the tradeoff may be beneficial from a minimax risk sort of perspective: a slight increase in FWER when FWER is near zero is not very costly, while improving worst-case power near zero is very beneficial. 

\begin{table}[htbp]
\centering
\begin{threeparttable}
\caption{\label{tab:sim-1s-FWER}Simulated FWER, one-sample, one-sided, 
$\alpha=0.1$, $n=100$.}
\begin{tabular}{llccc}
\toprule
$H_{0\tau}\textrm{ true}$ & $F^{-1}(\tau)=F_0^{-1}(\tau)$ & 
Dirichlet & Stepdown & Pre+Step \\
\midrule
$\tau\in[0,1]$   & $\tau\in[0,1]$   & 0.101 & 0.101 & 0.101 \\
$\tau\in[0,0.5]$ & $\tau\in[0,0.5]$ & 0.048 & 0.083 & 0.082 \\
$\tau\in[0,1]$   & $\tau\in[0.5,1]$ & 0.068 & 0.068 & 0.079 \\
$\tau\in[0,0.5]$ & $\tau\in\{0.5\}$ & 0.004 & 0.017 & 0.024 \\
\bottomrule
\end{tabular}
\begin{tablenotes}
\item \textbf{Note:} $H_{0\tau} \colon F^{-1}(\tau) \le F_0^{-1}(\tau)$, $F_0=\UnifDist(-1,1)$ so $F_0^{-1}(\tau)=2(\tau-0.5)$, $n=100$, $\num{1000}$ replications.  
For $\tau$ where $F^{-1}(\tau)\ne F_0^{-1}(\tau)$, $F^{-1}(\tau)=4(\tau-0.5)$; see \cref{fig:H1s}.  
``Dirichlet'' is \cref{meth:Dir-1s-1s}, ``Stepdown'' is \cref{meth:Dir-1s-1s-stepdown}, and ``Pre+Step'' is \cref{meth:Dir-1s-1s-stepdown,meth:Dir-1s-pretest} combined.
\end{tablenotes}
\end{threeparttable}
\end{table}

\begin{figure}[htbp]
\centering
\hfill
\includegraphics[width=0.24\textwidth,clip=true,trim=0 0 0 0]{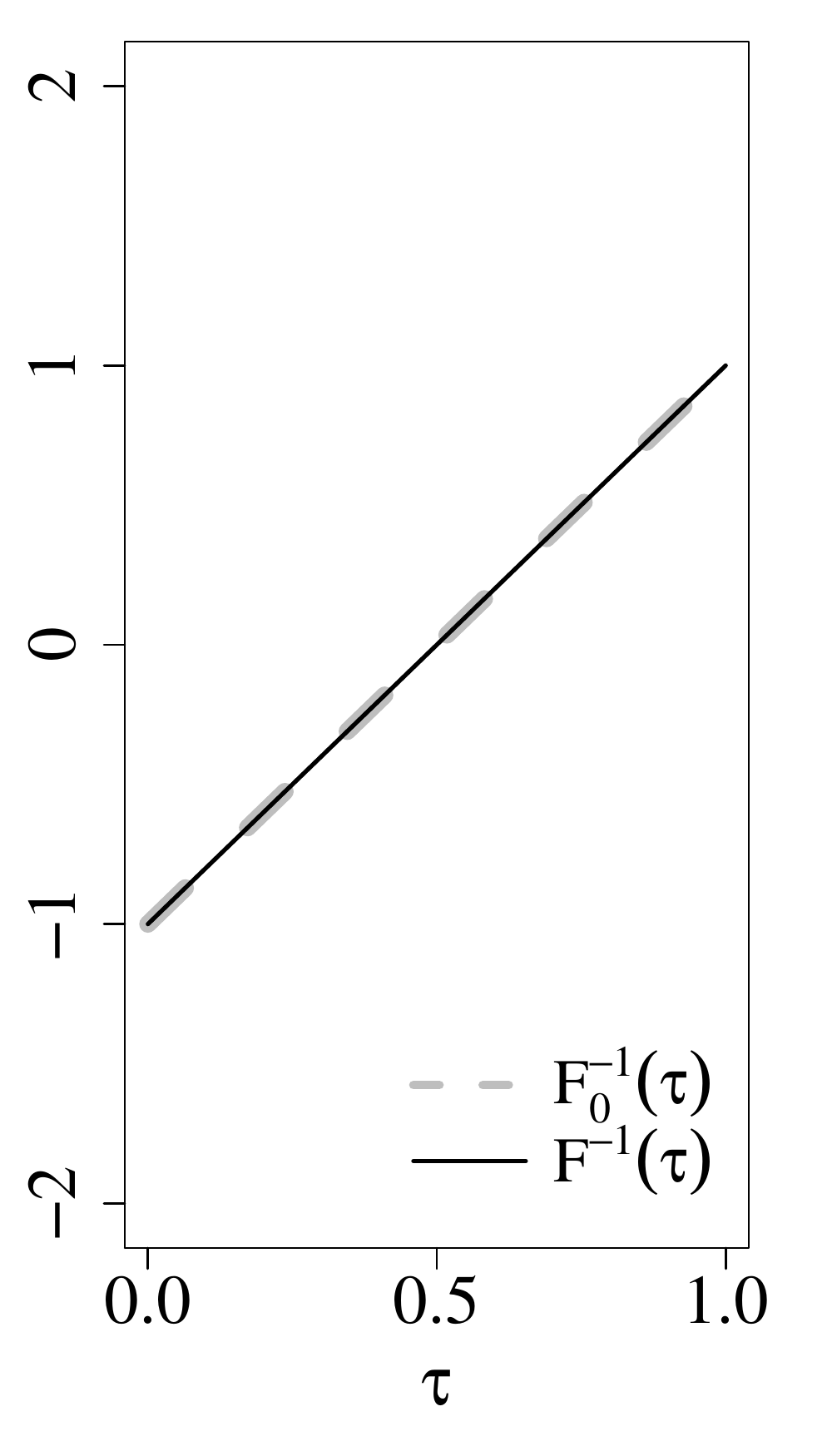}%
\hfill%
\includegraphics[width=0.24\textwidth,clip=true,trim=0 0 0 0]{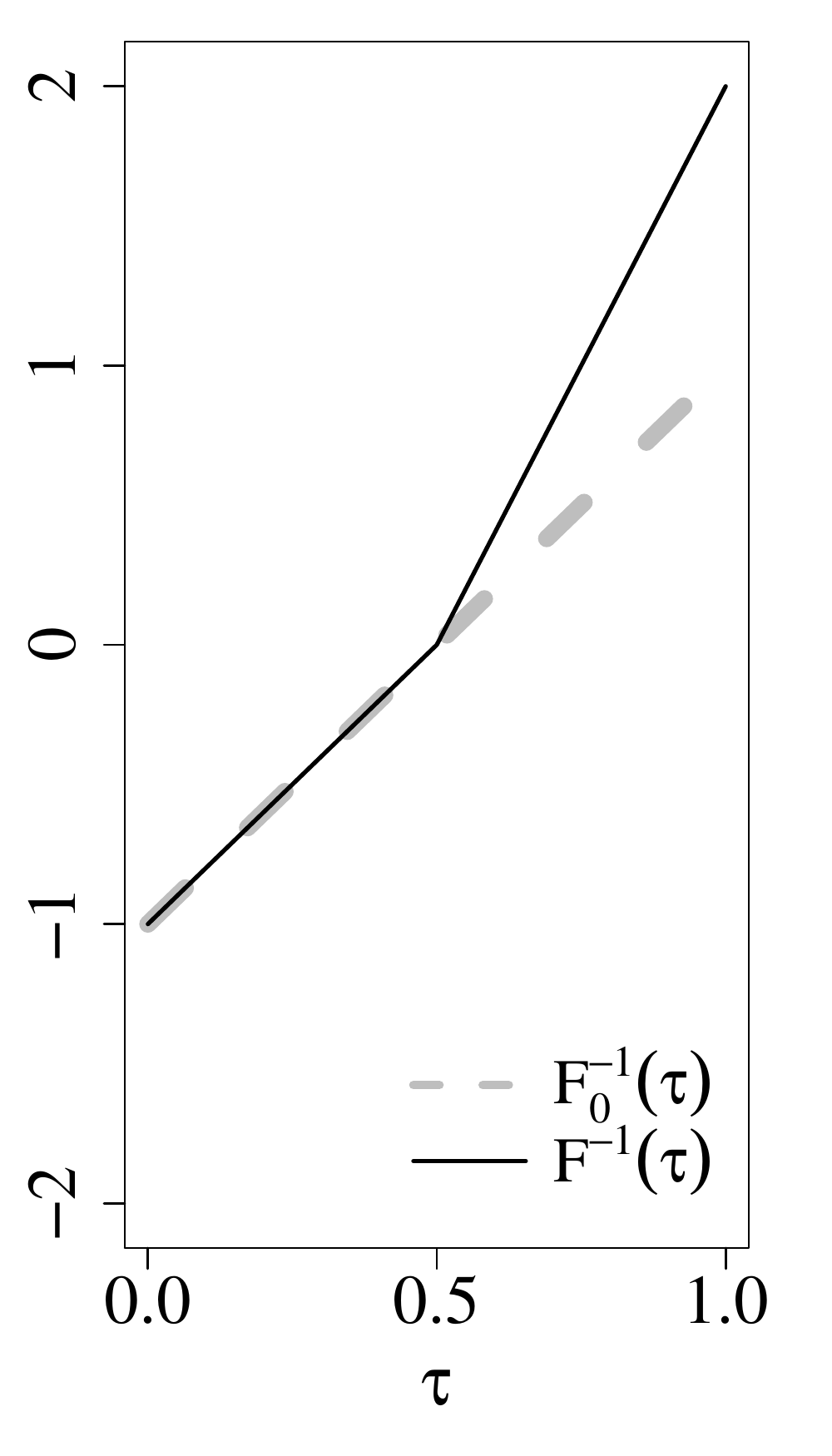}%
\hfill%
\includegraphics[width=0.24\textwidth,clip=true,trim=0 0 0 0]{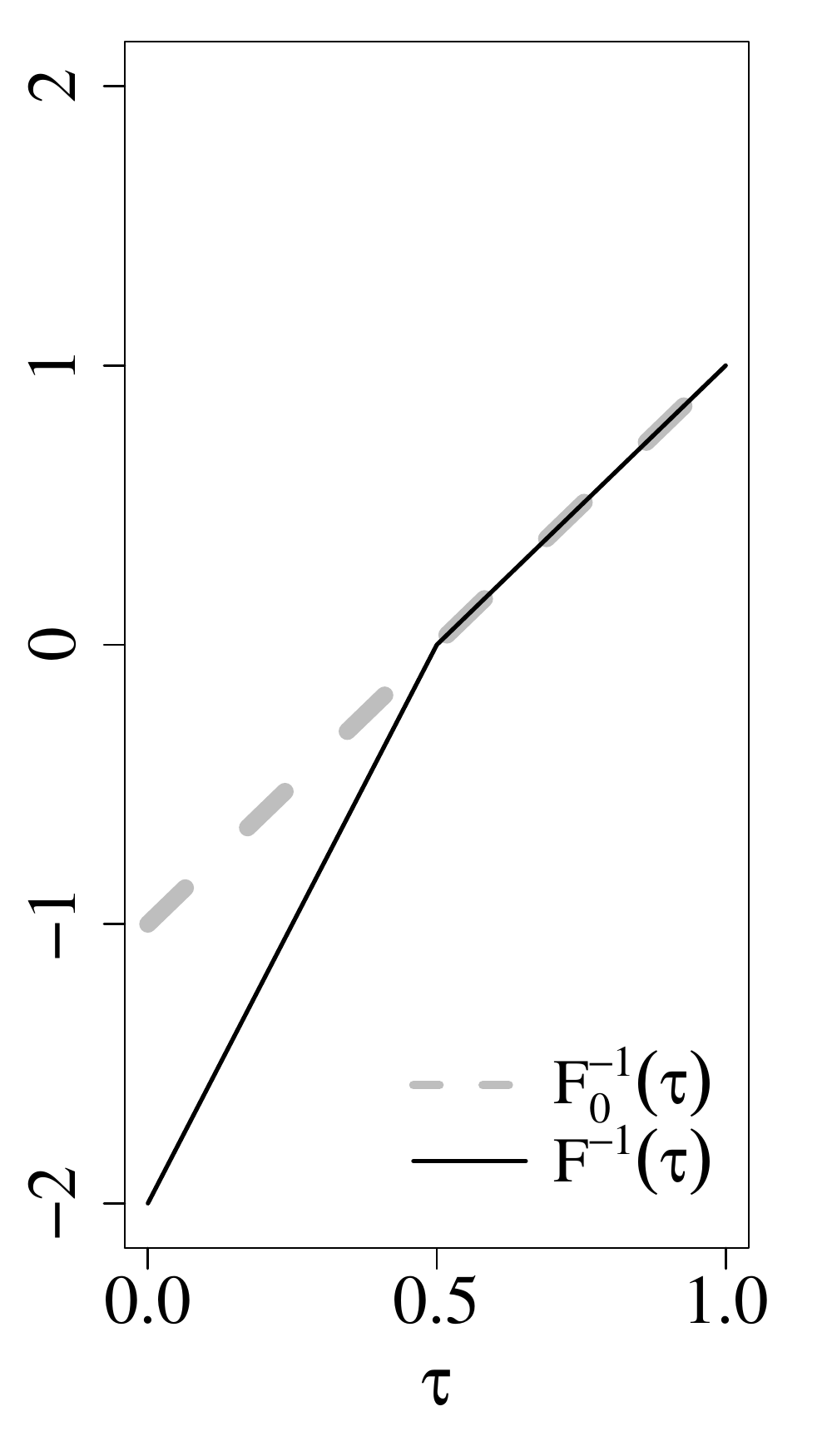}%
\hfill%
\includegraphics[width=0.24\textwidth,clip=true,trim=0 0 0 0]{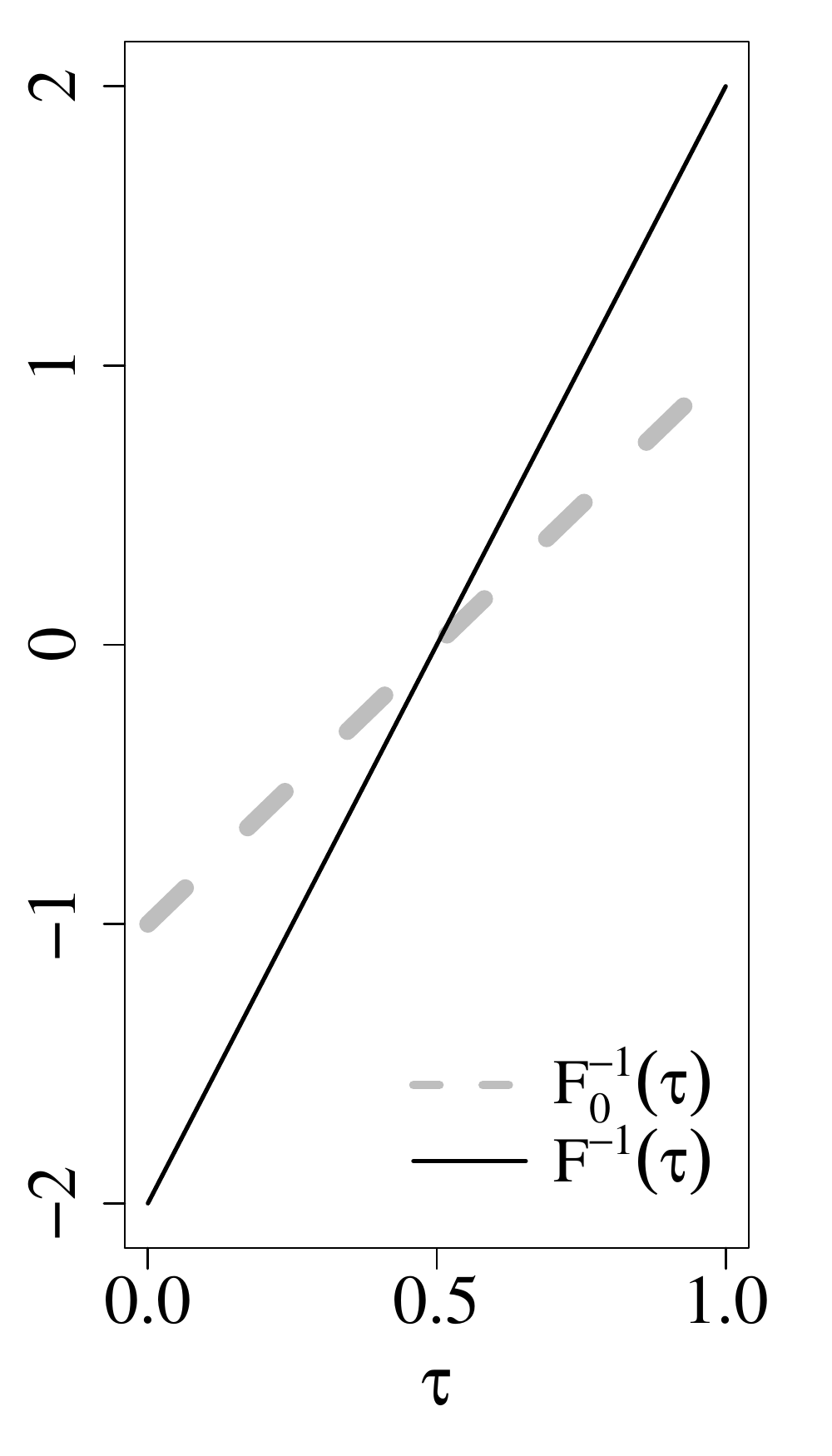}%
\hfill\null
\caption{\label{fig:H1s}Null and true quantile functions, $F_0^{-1}(\cdot)$ and $F^{-1}(\cdot)$, for the four rows in \cref{tab:sim-1s-FWER}.}
\end{figure}

\Cref{tab:sim-2s-size} shows weak control of FWER for two-sample, two-sided MTPs, i.e., FWER under $H_0 \colon F_X(\cdot) = F_Y(\cdot)$.  
Since all MTPs shown are distribution-free in this case, both samples are iid $\UnifDist(0,1)$.  

\begin{table}[htbp]
\centering
\caption{\label{tab:sim-2s-size}Simulated FWER, two-sample, two-sided.}
\begin{threeparttable}
\begin{tabular}{crrccc}
\toprule
$\alpha$ & \multicolumn{1}{c}{$n_X$} & \multicolumn{1}{c}{$n_Y$} & Dirichlet & KS & KS (exact) \\
\midrule
0.05 &  25 & 500 & 0.050 & 0.039 & 0.049 \\
0.10 &  25 & 500 & 0.100 & 0.082 & 0.095 \\
0.10 &  30 &  30 & 0.101 & 0.071 & 0.071 \\
0.10 &  29 &  30 & 0.101 & 0.079 & 0.099 \\
0.10 & 100 & 100 & 0.101 & 0.078 & 0.078 \\
0.10 &  99 & 100 & 0.106 & 0.090 & 0.099 \\
\bottomrule
\end{tabular}
\begin{tablenotes}
\item \textbf{Note:} $F_X(\cdot)=F_Y(\cdot)$, $10^6$ replications. 
\end{tablenotes}
\end{threeparttable}
\end{table}
%

\Cref{tab:sim-2s-size} shows our MTP's nearly exact FWER.  
The asymptotic KS-based MTP is somewhat conservative in these cases, as is the ``exact'' KS-based MTP. 
The exact and asymptotic KS can be identical due to the discreteness of the GOF $p$-value distributions (as discussed in \cref{sec:2s}), if the exact and asymptotic $p$-values lie on the same side of $\alpha=0.1$ for every possible data ordering (permutation). 
This discreteness makes the exact KS notably conservative when $n_X=n_Y=30$ and even $n_X=n_Y=100$, but the effect vanishes when reducing $n_X$ by one so that $n_X\ne n_Y$. 
The effect of discreteness on the Dirichlet MTP is negligible in all cases.

\Cref{tab:sim-2s-FWER} is the two-sample analog of \cref{tab:sim-1s-FWER}, showing strong control of FWER for one-sided MTPs. 
\Cref{fig:H1s} again visualizes the quantile functions for each row of the table, but now $F_Y=F_0$ and $F_X=F$. 
We compare MTPs for \cref{task:2s-test-FWER-CDF-1s,task:2s-test-FWER-1s} with $\alpha=0.05$: the basic Dirichlet MTP in \cref{meth:Dir-2s}, 
the joint quantile difference MTP in \cref{sec:2s-power}, 
the stepdown procedure in \cref{meth:Dir-2s-stepdown}, and 
the combined pre-test/stepdown procedure in \cref{meth:Dir-2s-pretest}.  
For \cref{meth:Dir-2s}, we forgo the adjustment of $\alpha$ for one-sided testing in favor of using our $\tilde\alpha$ reference table for faster computation. 

\begin{table}[htbp]
\centering
\begin{threeparttable}
\caption{\label{tab:sim-2s-FWER}Simulated FWER, two-sample, one-sided, $\alpha=0.05$, $n_X=n_Y=200$.}
\begin{tabular}{lllcccc}
\toprule
\multicolumn{1}{l}{$H_{0r}\textrm{ true}$} & 
\multicolumn{1}{l}{$H_{0\tau}\textrm{ true}$} & 
\multicolumn{1}{l}{$F_X^{-1}(\tau)=F_Y^{-1}(\tau)$} & 
Basic & Joint & Stepdown & Pre+Step \\
\midrule
$r\in[-1,1]$   & $\tau\in[0,1]$   & $\tau\in[0,1]$   & 0.049 & 
0.044 & 0.044 & 0.044 \\
$r\in[-1,0]$   & $\tau\in[0,0.5]$ & $\tau\in[0,0.5]$ & 0.031 & 
0.031 & 0.044 & 0.044 \\
$r\in[-1,1]$   & $\tau\in[0,1]$   & $\tau\in[0.5,1]$ & 0.013 & 
0.026 & 0.026 & 0.032 \\
$r\in[-1,0]$   & $\tau\in[0,0.5]$ & $\tau\in\{0.5\}$ & 0.003 & 
0.000 & 0.002 & 0.006 \\
\bottomrule
\end{tabular}
\begin{tablenotes}
\item \textbf{Note:} $H_{0r} \colon F_X(r) \ge F_Y(r)$ or $H_{0\tau} \colon F_X^{-1}(\tau) \le F_Y^{-1}(\tau)$, $F_Y=\UnifDist(-1,1)$ so $F_Y^{-1}(\tau)=2(\tau-0.5)$, $1000$ replications.  
For $\tau$ where $F_X^{-1}(\tau)\ne F_Y^{-1}(\tau)$, $F_X^{-1}(\tau)=4(\tau-0.5)$; see \cref{fig:H1s}, where $F_Y=F_0$ and $F_X=F$.  
``Basic'' is \cref{meth:Dir-2s}, 
``Joint'' uses only iteration $i=0$ from \cref{meth:Dir-2s-stepdown}, 
``Stepdown'' is \cref{meth:Dir-2s-stepdown}, and 
``Pre+Step'' is \cref{meth:Dir-2s-pretest}. 
The Basic test is evaluated at $r=-0.99,-0.98,\ldots,0.99$.
\end{tablenotes}
\end{threeparttable}
\end{table}

\Cref{tab:sim-2s-FWER} shows strong control of FWER for all four methods. 
The stepdown and pre-test procedures' strong control of FWER in \cref{tab:sim-2s-FWER} supports our heuristic arguments. 
Overall, the patterns are similar to those in \cref{tab:sim-1s-FWER}.

\subsection{Power comparison}
\label{sec:sim-power}

We now illustrate the power improvement from the stepdown and pre-test procedures. 
For pointwise and global power comparisons with the KS-based MTP, see \cref{sec:sim-pt-pwr}. 

For one-sample multiple testing, \cref{fig:sim-1s-stepdown} compares the pointwise (by $\tau$) RPs of the same methods shown in \cref{tab:sim-1s-FWER}.  
The DGPs are the same as the rows in \cref{tab:sim-1s-FWER} where 
$\{\tau : H_{0\tau}\textrm{ is true}\}=[0,0.5]$, 
visualized in the second and fourth panels in \cref{fig:H1s}. 
Since all methods (correctly) have RP near zero for $\tau<0.5$, only larger $\tau$ are shown. 
Compared with the basic Dirichlet MTP, the stepdown procedure weakly increases pointwise power, and adding the pre-test does, too. 
The pre-test is only helpful in the right panel where the null hypothesis constraint is slack for $\tau<0.5$. 

\begin{figure}[htbp]
\centering
\hfill
\includegraphics[width=0.4\textwidth,clip=true,trim=10 15 10 80]{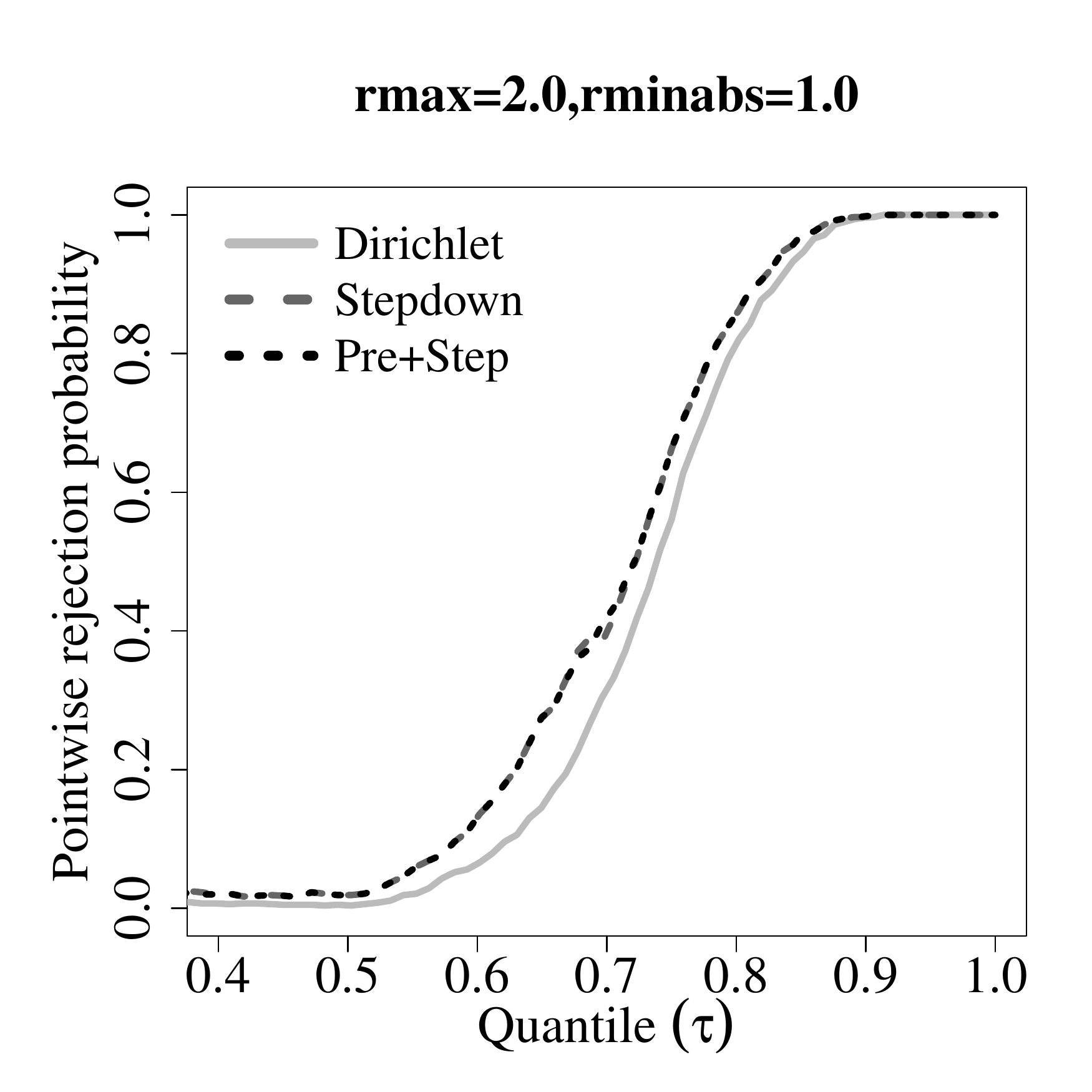}%
\hfill%
\includegraphics[width=0.4\textwidth,clip=true,trim=10 15 10 80]{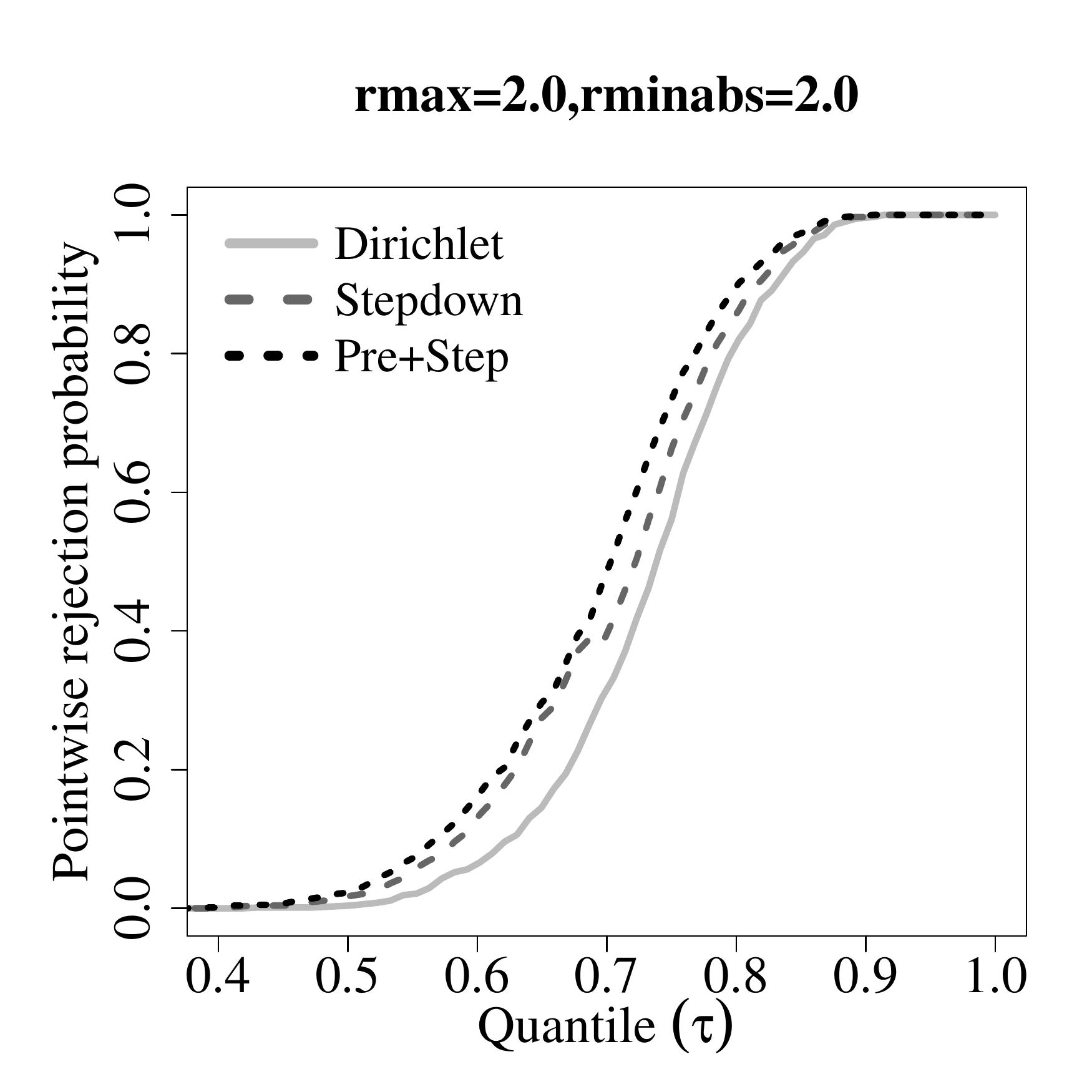}%
\hfill\null
\caption{\label{fig:sim-1s-stepdown}Simulated pointwise RP by quantile ($\tau$), same DGP and methods as \cref{tab:sim-1s-FWER}. 
Left: $F^{-1}(\tau)=F_0^{-1}(\tau)$ for $\tau\le0.5$, $F^{-1}(\tau)=4(\tau-0.5)$ otherwise.  
Right: $F^{-1}(\tau)=4(\tau-0.5)$.  
}
\end{figure}

For two-sample multiple testing, \cref{fig:sim-2s-stepdown} compares the pointwise (by $\tau$) RPs of the methods shown in \cref{tab:sim-2s-FWER}. 
For the basic MTP that tests $H_{0r}$ (with $r\in\R$) instead of $H_{0\tau}$, we plot the RP of $H_{0r}$ at $\tau=F_Y(r)=(r+1)/2$. 
The DGPs are the same as the rows in \cref{tab:sim-2s-FWER} where 
$\{\tau : H_{0\tau}\textrm{ is true}\}=[0,0.5]$.  
The power improvements due to the stepdown and pre-test procedures are similar to \cref{fig:sim-1s-stepdown}: modest but noticeable over a range of $\tau>0.5$. 
A bigger power difference is between the basic MTP and the joint quantile difference MTP (iteration $i=0$ of \cref{meth:Dir-2s-stepdown}). 
These MTPs' powers differ because the latter MTP explicitly focuses on fewer quantiles, in this case only $10$, so more power can be focused on each quantile. 
This may be a reasonable way to improve power, especially in small samples, or if one assumes the quantile differences do not vary too quickly with $\tau$. 
One could further increase pointwise power by examining yet fewer quantiles, but the choice of quantiles becomes arbitrary and subject to manipulation. 

\begin{figure}[htbp]
\centering
\hfill
\includegraphics[width=0.45\textwidth,clip=true,trim=10 15 10 80]{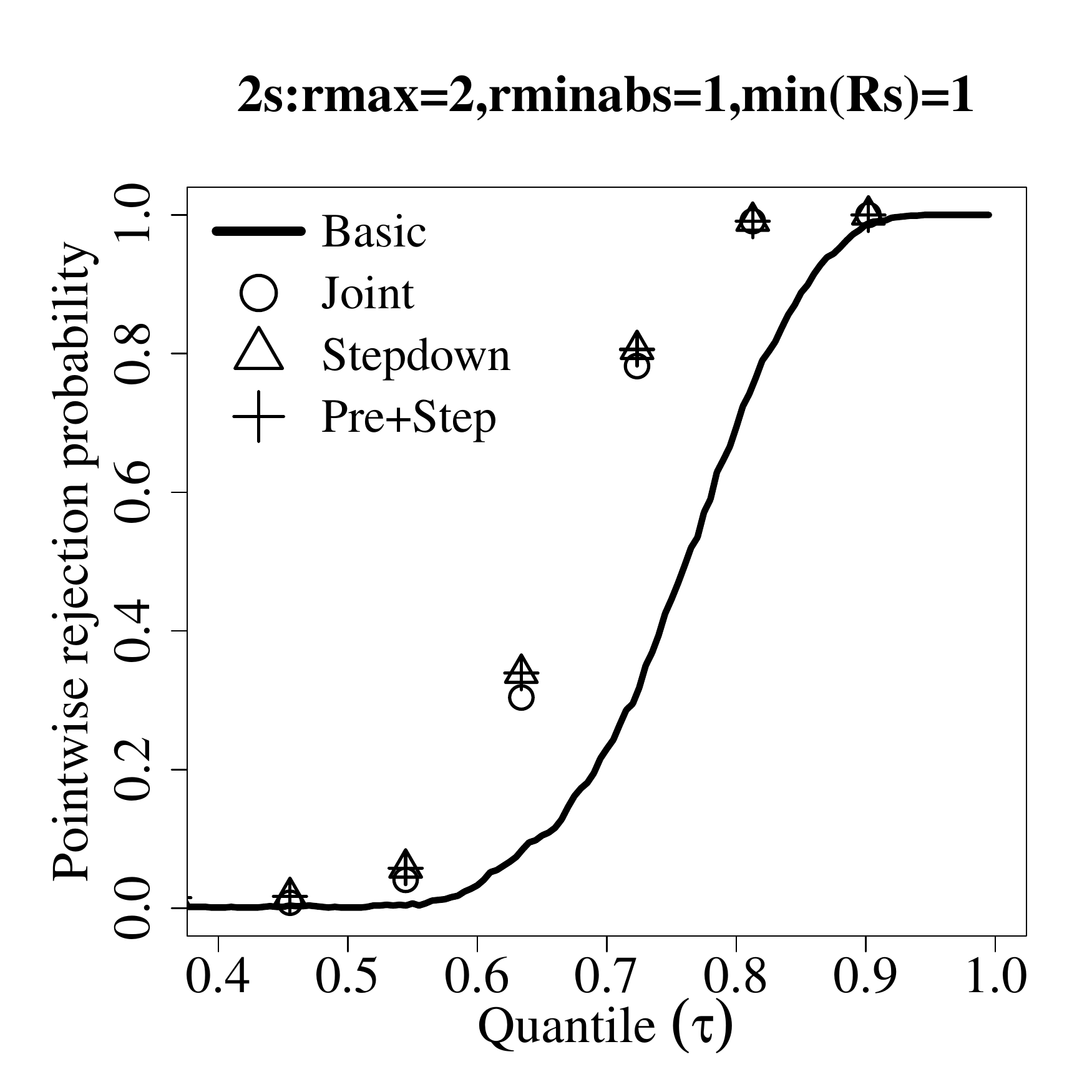}%
\hfill%
\includegraphics[width=0.45\textwidth,clip=true,trim=10 15 10 80]{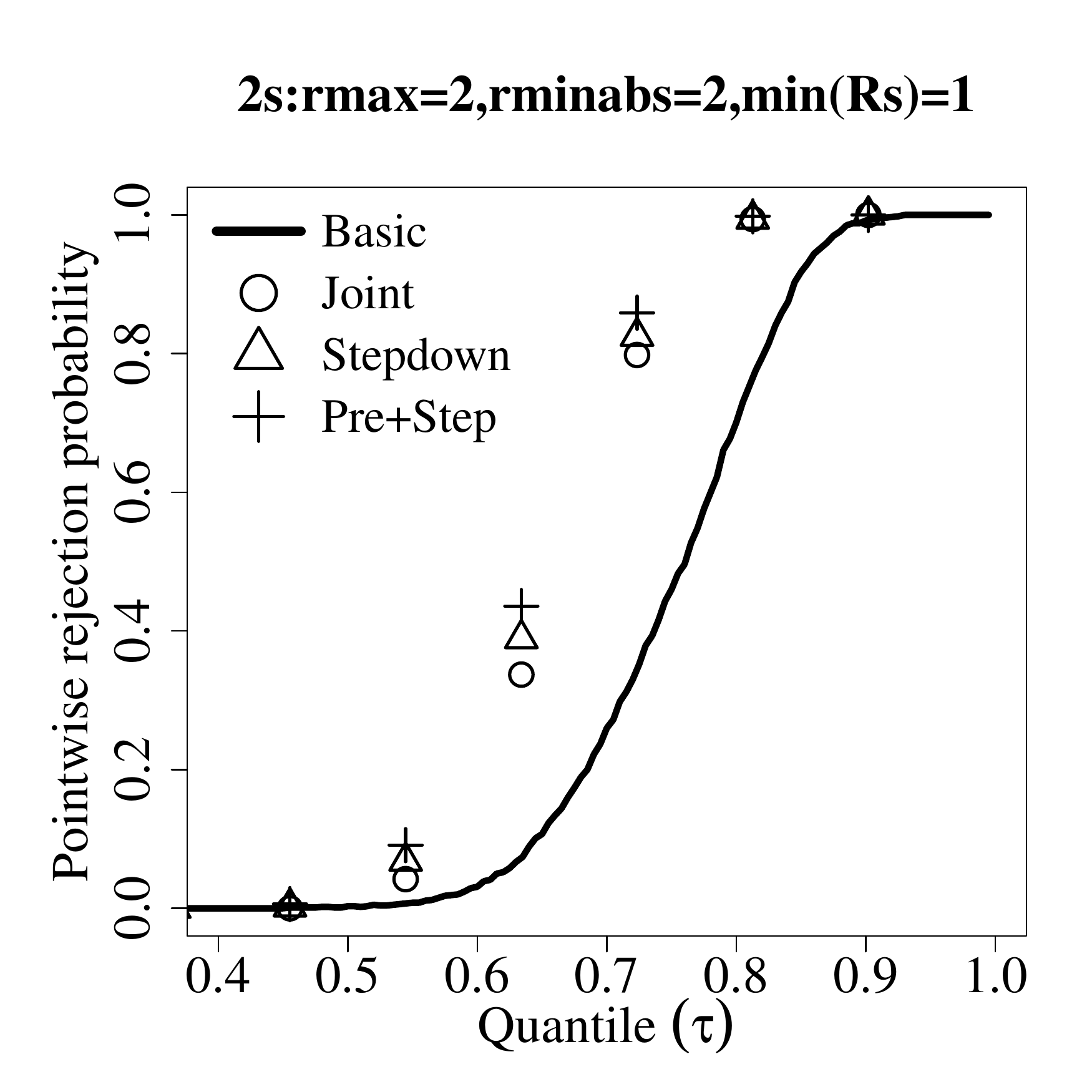}%
\hfill\null
\caption{\label{fig:sim-2s-stepdown}Simulated pointwise RP by quantile, same DGP and methods as \cref{tab:sim-2s-FWER}. For the Basic method, the RP is plotted for $\tau=F_Y(r)=(r+1)/2$. 
Left: $F_X^{-1}(\tau)=F_Y^{-1}(\tau)$ for $\tau\le0.5$, $F_X^{-1}(\tau)=4(\tau-0.5)$ otherwise.  
Right: $F_X^{-1}(\tau)=4(\tau-0.5)$.  
}
\end{figure}

\subsection{Computation time}\label{sec:sim-1s-comp}

\Cref{tab:comp-time} shows computation times for one-sample, two-sided methods: the Dirichlet MTP, the asymptotic KS test, and the exact KS test. 
Each value in the table has been averaged over at least four repetitions, using a standard desktop computer (8GB RAM, 3.2GHz processor). 
The time to simulate $\tilde\alpha$ (to the same degree of precision as \cref{fact:alpha-tilde-rate}) is also shown; this is the time saved by \cref{fact:alpha-tilde-rate} compared with just-in-time simulation as in \citet{BujaRolke2006}. 
The simulation time depends on the starting value of $\tilde\alpha$ in the numerical search; we use five search iterations to be comparable to \citet[p.\ 254]{AldorNoimanEtAl2013}, who report a runtime of $10$ seconds for $n=100$ (compared to $9.47$ seconds in our table).

\begin{table}[htbp]
\centering
\caption{\label{tab:comp-time}Computation time (seconds); one-sample, two-sided, $\alpha=0.1$.}
\begin{threeparttable}
\begin{tabular}{ccccc}
\toprule
$\log_{10}(n)$ & \cref{fact:alpha-tilde-rate} & \citet{BujaRolke2006} 
& KS & KS (exact) \\
\midrule
2 & 0.00 & \phantom{18}9.47 & 0.00 & \phantom{2}0.00 \\
3 & 0.02 & \phantom{5}14.84 & 0.00 & \phantom{2}0.00 \\
4 & 0.23 & \phantom{1}82.48 & 0.00 & \phantom{2}0.08 \\
5 & 2.20 & \phantom{}851.14 & 0.01 & 25.25 \\
\bottomrule
\end{tabular}
%
\end{threeparttable}
\end{table}

In \cref{tab:comp-time}, the asymptotic KS test runs instantly even for $n=\num{100000}$.  
The exact KS slows significantly around $n=\num{100000}$, requiring over $20$ seconds per test.  
With \cref{fact:alpha-tilde-rate}, the Dirichlet MTP only takes a few seconds even with $n=\num{100000}$, faster than the exact KS and orders of magnitude faster than just-in-time simulation.

\subsection{Empirical-based DGP}
\label{sec:sim-emp}

A DGP based on the ``gift wage'' empirical example in \cref{sec:emp-gift} was constructed as follows. 
For both the library and fundraising tasks, using the Period 1 data, for both treatment and control groups, first a piecewise linear quantile function was interpolated between points $(\tau,F^{-1}(\tau))$ consisting of $(k/(n+1),Y_{n:k})$, $(0,0)$, and $(1,Y_{n:n}+10)$. 
Second, this was modified to only include integer values (as in the data) by applying the floor function $\lfloor\cdot\rfloor$ to generate a step function $F^{-1}(\cdot)$. 
These are the true population quantile functions for our simulations. 
The sample sizes are the same as in our empirical example (library: $10$ control, $9$ treatment; fundraising: $10$ control, $13$ treatment), as is the nominal one-sided FWER level $\alpha=0.1$. 
There were $\num{10000}$ simulation replications. 

We compare five methods: 
``Basic'' is our Dirichlet-based MTP in \cref{meth:Dir-2s}, 
``KS'' is the KS-based MTP as in \cref{prop:KS-2s-FWER}, 
``Joint'' is a joint quantile difference test (iteration $i=0$ from \cref{meth:Dir-2s-stepdown}), 
``Stepdown'' is \cref{meth:Dir-2s-stepdown}, and 
``Pre+Step'' is \cref{meth:Dir-2s-pretest}. 
Basic and KS test $H_{0r} \colon F_T(r) \ge F_C(r)$ over each integer $r$ between $0$ and the maximum possible value, where subscript $T$ stands for ``treatment'' and $C$ for ``control.'' 
In the library task, $H_{0r}$ is true for $0 \le r \le 27$; 
in the fundraising task, $H_{0r}$ is true for $42 \le r \le 44$. 
The quantile tests evaluate $H_{0\tau}$ for $\tau\in\{0.30, 0.50, 0.70\}$ for the library task and $\tau\in\{ 0.22, 0.41, 0.59, 0.78 \}$ for fundraising; all $H_{0\tau}$ are false. 

The FWER is nearly zero for both the Basic and KS MTPs: $0.002$ for the library DGP and $0.001$ for fundraising (for both methods). 
The FWER is close to zero because $H_{0r}$ is true for relatively small ranges of $r$. 
This is similar to the FWER in the last row of \cref{tab:sim-2s-FWER}, where it is shown how FWER only gets close to the nominal level when nearly all $H_{0r}$ are true.  

The Basic MTP has the best global power against $H_0 \colon F_T(\cdot) \ge F_C(\cdot)$. 
That is, it has the highest probability of rejecting at least one $H_{0r}$. 
Next best are the joint quantile tests (which are all the same because the pre-test does not help for these DGPs and the stepdown cannot increase \emph{global} power). 
The KS has the worst global power. 
Although not surprising that the Basic MTP has better global power than the KS, it is surprising that it fares better than the joint quantile tests that focus power on a smaller number of points where all $H_{0\tau}$ are false. 
The simple explanation may be that these few $\tau$ do not match up with the most statistically obviously false $H_{0\tau}$. 
So at least here, the ``evenly sensitive'' approach of the Basic MTP actually leads to the best global power, too. 


\begin{table}[htbp]
\centering
\caption{\label{tab:sim-emp}FWER and global power for empirical simulation.}
\begin{threeparttable}
\begin{tabular}{lcclcc}
\toprule
& \multicolumn{2}{c}{FWER} && \multicolumn{2}{c}{Global power} \\
\cmidrule{2-3}\cmidrule{5-6}
Method & Library & Fundraising && Library & Fundraising \\
\midrule
Basic    & 0.002 & 0.001 && 0.647 & 0.815 \\
KS       & 0.002 & 0.001 && 0.477 & 0.714 \\
Joint    & 0.000 & 0.000 && 0.583 & 0.758 \\
Stepdown & 0.000 & 0.000 && 0.583 & 0.758 \\
Pre+Step & 0.000 & 0.000 && 0.583 & 0.759 \\
\bottomrule
\end{tabular}
%
\end{threeparttable}
\end{table}

\Cref{fig:sim-emp} shows pointwise rejection probability (RP), similar to \cref{fig:sim-2s-stepdown}. 
The joint quantile tests generally have higher pointwise RP, although the magnitude partly depends on whether $r$ is compared with $F_T^{-1}(\tau)$ (as in \cref{fig:sim-emp}) or $F_C^{-1}(\tau)$. 
The pre-test is useless because all $H_{0\tau}$ are false. 
The stepdown procedure improves pointwise RP by a few percentage points, sometimes more, sometimes less. 
Most notably, even though we used a nominal FWER level slightly above $10\%$ for the KS MTP and slightly below $10\%$ for the Dirichlet MTP, the Dirichlet MTP has significantly higher pointwise RP (i.e., power) than the KS MTP across a range of $r$. 

\begin{figure}[htbp]
\centering
\includegraphics[width=0.47\textwidth,clip=true,trim=20 15 30 80]{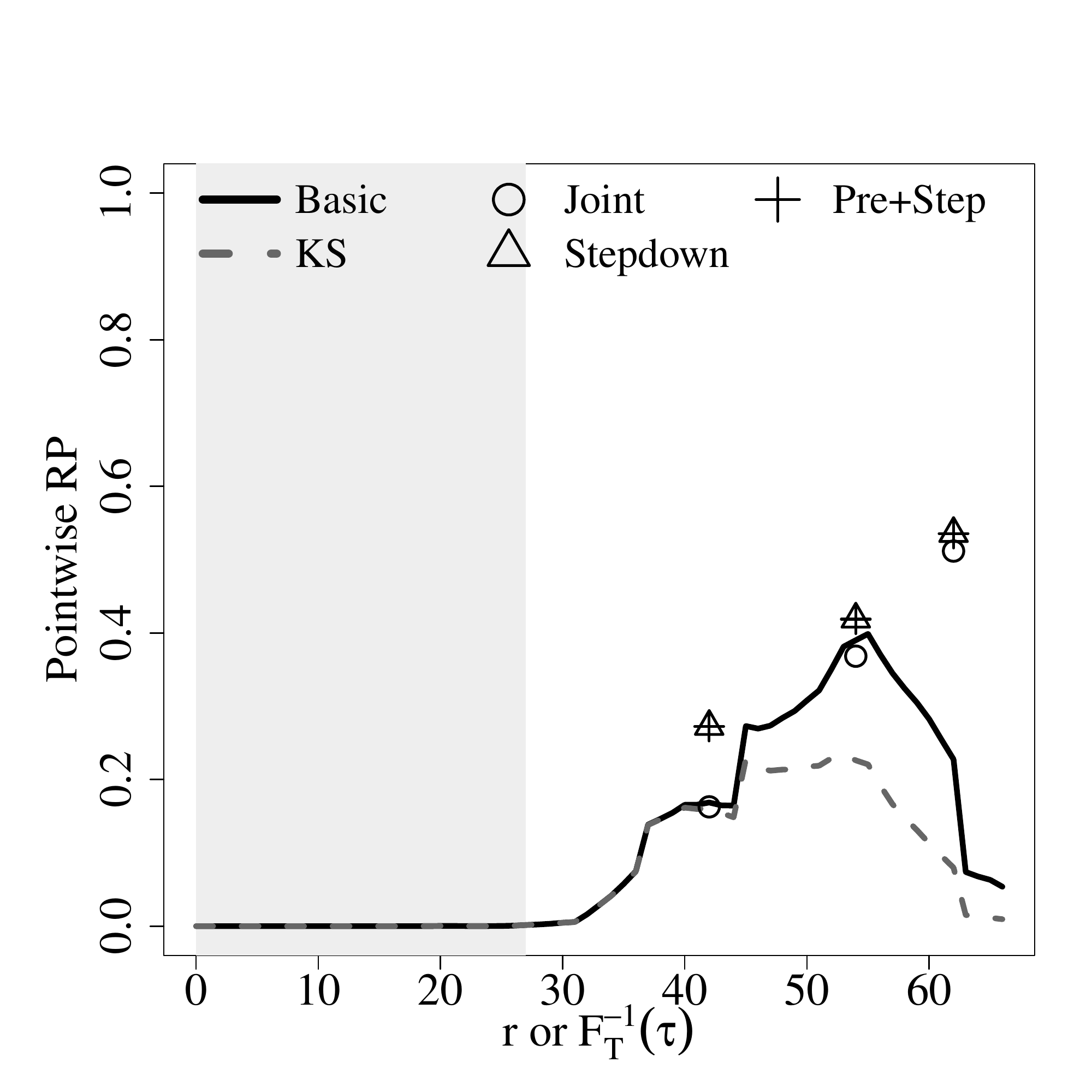}%
\hfill%
\includegraphics[width=0.47\textwidth,clip=true,trim=20 15 30 80]{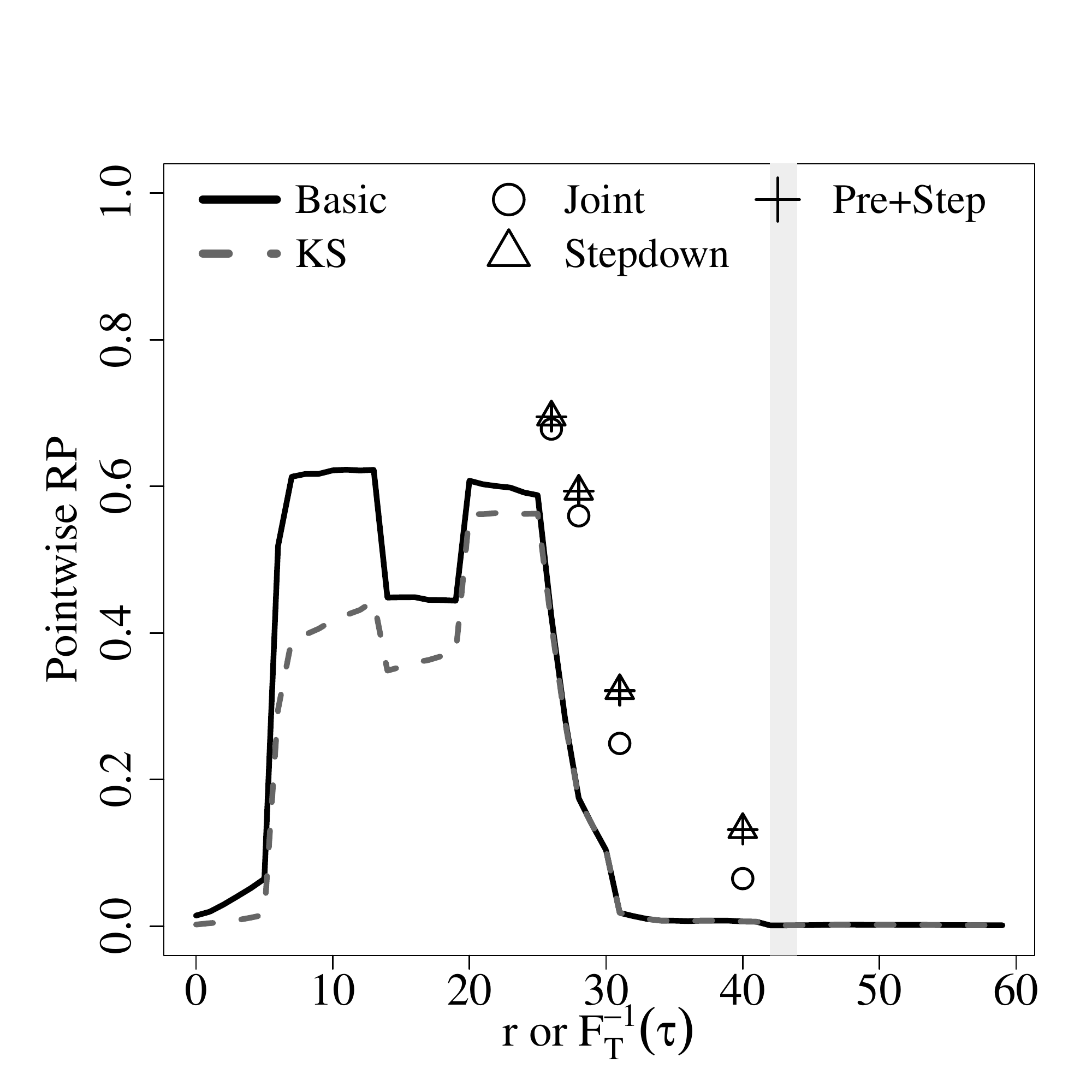}%
\caption{\label{fig:sim-emp}Simulated pointwise RP, empirical simulations, $H_{0r} \colon F_T(r) \ge F_C(r)$. 
For the Basic and KS MTPs, the horizontal axis shows the value of $r$; otherwise, it shows $F_T^{-1}(\tau)$. 
Gray shading indicates true $H_{0r}$. 
Left: library data entry task (units: books). 
Right: door-to-door fundraising task (units: dollars). 
}
\end{figure}

\section{Conclusion}
\label{sec:conclusion}

We have considered the question, ``At which quantiles or CDF values do two distributions differ?'' 
Framed as multiple testing across the continuum of quantiles $\tau\in(0,1)$ or values $r\in\R$, we have shown KS-based multiple testing procedures to have strong control of FWER, for both one-sided and two-sided, one-sample and two-sample inference. 
Our newly proposed Dirichlet-based procedures also have strong control of finite-sample FWER, along with other advantages: more even sensitivity than KS, improved global power, stepdown and pre-test power improvements, and fast computation. 

Extensions to conditional distributions and regression discontinuity have also been provided here. 
Future work may include extensions to other models, stepdown and pre-test procedures to enhance \cref{meth:Dir-2s}, derivation of FWER bounds for the two-sample quantile MTP, and exploration of the connection with the continuity-corrected Bayesian bootstrap of \citet{Banks1988}.

\Supplemental{}{
\bibliographystyle{elsarticle-harv}
\singlespacing

\doublespacing}

\Supplemental{\pagebreak\setcounter{page}{2}}{}

\appendix

\singlespacing

\section{Additional methods}
\label{sec:app-meth}

\subsection{One-sample methods}
\label{sec:app-meth-1s}

\begin{method}\label{meth:Dir-1s-2s-stepdown}
For \cref{task:1s-test-FWER-2s}, modify \cref{meth:Dir-1s-1s-stepdown} as follows.  
Define $\ell_k$ and $u_k$ as in \cref{eqn:def-ell-k-u-k}.  
Instead of only $r_{k,i}$ corresponding to either $\ell_k$ or $u_k$, include both $r_{k,\ell,i}$ corresponding to $\ell_k$ and $r_{k,u,i}$ corresponding to $u_k$.  
Instead of $\hat{K}_0=\{1,\ldots,n\}$, let $\hat{K}_0^\ell=\hat{K}_0^u=\{1,\ldots,n\}$, where $\hat{K}_0^\ell$ corresponds to the $\ell_k$ and $\hat{K}_0^u$ to the $u_k$.  
Replace \cref{eqn:meth-1s-stepdown-calib} with
\begin{align}\notag
\alpha 
  &\ge 1 - \Pr\Bigl( \bigcap_{k\in\hat{K}_i^\ell} \left\{ X_{n:r_{k,\ell,i}}  \ge F^{-1}(\ell_k)\right\} 
                \cap \bigcap_{k\in\hat{K}_i^u}    \left\{ X_{n:r_{k,u,i}}     \le F^{-1}(u_k)   \right\} 
              \Bigr) \\
  &\ge 1 - \Pr\Bigl( \bigcap_{k\in\hat{K}_i^\ell} \left\{F(X_{n:r_{k,\ell,i}})\ge\ell_k\right\} 
                \cap \bigcap_{k\in\hat{K}_i^u}    \left\{F(X_{n:r_{k,u,i}})   \le u_k  \right\} \Bigr) .
\label{eqn:meth-1s-stepdown-calib-2s}
\end{align}
Check for rejections of both $F^{-1}(\tau)\ge F_0^{-1}(\tau)$ and $F^{-1}(\tau)\le F_0^{-1}(\tau)$ as described in \cref{meth:Dir-1s-1s-stepdown}; either implies rejection of $H_{0\tau} \colon F^{-1}(\tau) = F_0^{-1}(\tau)$.  
\end{method}

\begin{method}[Pre-test only]\label{meth:1s-pretest}
Consider the pre-test null hypotheses $H_{0\ell_k} \colon F^{-1}(\ell_k) \le F_0^{-1}(\ell_k)$, defining $\ell_k$ as in \cref{eqn:def-ell-k-u-k}. 
Given $\tilde\alpha$, let $\underline{k}=\min\{k:\ell_k\ge B^{1-\tilde\alpha}_{1,n}\}$ and $r_k=\max\{k':B^{1-\tilde\alpha}_{k',n}\le\ell_k\}$ (for $k\ge\underline{k}$), where both $k$ and $k'$ are restricted to integers $\{1,\ldots,n\}$.  
Using \cref{thm:Wilks}, calculate
\[ \alpha_p(\tilde\alpha,n) 
  =1 - \Pr\Bigl( \bigcap_{k=\underline{k}}^n X_{n:r_k}\le F^{-1}(\ell_k) \Bigr) 
  =1 - \Pr\Bigl( \bigcap_{k=\underline{k}}^n F(X_{n:r_k})\le \ell_k \Bigr) 
  . \]
Adjust $\tilde\alpha$ until $\alpha_p(\tilde\alpha,n)$ equals (approximately) the desired FWER.  Reject $H_{0\tau} \colon F^{-1}(\tau) \le F_0^{-1}(\tau)$ when $\max\{X_{n:r_k}:\ell_k\le\tau\}>F_0^{-1}(\tau)$. 

To instead pre-test $H_{0u_k} \colon F^{-1}(u_k) \ge F_0^{-1}(u_k)$, reverse all inequalities and $\min$/$\max$, and replace $\ell_k$ with $u_k$ (also from \cref{eqn:def-ell-k-u-k}), $B^{1-\tilde\alpha}_{k,n}$ with $B^{\tilde\alpha}_{k,n}$, $\underline{k}=\min\{k:\ell_k\ge B^{1-\tilde\alpha}_{1,n}\}$ with $\bar{k}=\max\{k:u_k\le B^{\tilde\alpha}_{n,n}\}$, and $\bigcap_{k=\underline{k}}^n$ with $\bigcap_{k=1}^{\bar{k}}$. 
\end{method}

\subsection{Two-sample quantile MTP and procedures to improve power}
\label{sec:2s-power}

We propose a two-sample quantile MTP along with stepdown and pre-test procedures. 
Unlike the other methods in this paper, these are not based on finite-sample distributions of order statistics. 
Instead, we (slightly) extend results from \citet{GoldmanKaplan2017b}. 
This requires that the quantiles not be too close together. 
To be more explicit about how the methods work, we present modified tasks that they address.

\begin{enumeratecomp}[\bfseries T{a}sk 1]
\setcounter{enumi}{\theallenumi}
\item\label{task:2s-test-FWER-2s} Testing a family of $M_n=\lfloor n^{2/5}\rfloor$ two-sample quantile equality hypotheses with strong control of FWER; specifically, for $j=1,\ldots,M_n$, 
$H_{0j} \colon F_X^{-1}(t) = F_Y^{-1}(t)$ 
for all $t\in[(j-0.5)/(M_n+1),(j+0.5)/(M_n+1)]$. 
\item\label{task:2s-test-FWER-1s} Same as \cref{task:2s-test-FWER-2s} but with $F_X^{-1}(t)\le F_Y^{-1}(t)$ or $F_X^{-1}(t)\ge F_Y^{-1}(t)$. 
\end{enumeratecomp}

Consider a fixed set of $M$ quantiles, $\tau_1,\ldots,\tau_M$, and let $\Delta_j\equiv F_Y^{-1}(\tau_j)-F_X^{-1}(\tau_j)$.  
\citet{GoldmanKaplan2017b} use ``fractional order statistics'' to construct a CI for each $\Delta_j$ with $1-\alpha+O\bigl(n^{-2/3}\log(n)\bigr)$ coverage probability, and CIs for all $F_X^{-1}(\tau_j)$ or $F_Y^{-1}(\tau_j)$ that have joint (over $j=1,\ldots,M$) coverage probability of $1-\alpha+O(n^{-1})$. 
It is a small step to infer that CIs for all $\Delta_j$ can be constructed with joint $1-\alpha+O\bigl(n^{-2/3}\log(n)\bigr)$, using the modified calibration (of $\tilde\alpha$) seen in our code. 
For a lower one-sided CI, the upper endpoints are $\hat{Q}^L_Y\bigl(u_{y,j}^h(\tilde\alpha)\bigr)-\hat{Q}^L_X\bigl(u_{x,j}^l(\tilde\alpha)\bigr)$, where $u^h_{y,j}(\tilde\alpha)\approx \tau_j + n_Y^{-1/2}z_{1-\tilde\alpha}\sqrt{\tau_j(1-\tau_j)}$, $u^l_{x,j}(\tilde\alpha)\approx \tau_j - n_X^{-1/2}z_{1-\tilde\alpha}\sqrt{\tau_j(1-\tau_j)}$, 
$z_{1-\tilde\alpha}$ is the standard normal distribution's $(1-\tilde\alpha)$-quantile, 
$\tilde\alpha$ solves 
\[ 1-\alpha = \Pr\left( \bigcap_{j=1}^{M} \left\{ \tilde{Q}^I_{U_y}\bigl(u^h_{y,j}(\tilde\alpha)\bigr) - \tilde{Q}^I_{U_x}\bigl(u^l_{x,j}(\tilde\alpha)\bigr) > 0 \right\} \right) , \]
$\tilde{Q}^I_{U_x}$ is a Dirichlet process with index measure $\nu(\cdot)$ where $\nu([0,t])=(n_X+1)t$ for $t\in[0,1]$ \citep{Stigler1977}, 
and $\hat{Q}^L_X(u)\equiv X_{n_X:k} + [u(n_X+1)-k] X_{n_X:k+1}$, $k=\lfloor u(n_X+1)\rfloor$, and similarly for $\hat{Q}^L_Y(u)$. 
The upper one-sided CI is defined similarly, and the two-sided CI is the intersection of upper and lower one-sided CIs. 

Let $\widehat{\textrm{CI}}_j$ denote the CI for $\Delta_j$.  
Letting $I=\{j : H_{0j}\textrm{ is true}\}$, 
\begin{equation*}
\FWER
  = 1 - \Pr\left( \bigcap_{j\in I} \{\Delta_j\in\widehat{\textrm{CI}}_j\} \right) 
  \le 1 - \Pr\left( \bigcap_{j=1}^{M} \{\Delta_j\in\widehat{\textrm{CI}}_j\} \right) 
  \to 1-(1-\alpha) 
  = \alpha . 
\end{equation*}

If $M_n\to\infty$ too quickly, then the arguments from \citet{GoldmanKaplan2017b} break down, but we conjecture they still hold with $M_n=O(n^{2/5})$. 

\begin{method}\label{meth:Dir-2s-stepdown}
For \cref{task:2s-test-FWER-2s}, let $\tau_j=j/(M_n+1)$ for $j=1,\ldots,M_n$. 
Let $\hat T_0\equiv\{1,\ldots,M_n\}$. 
Given a pointwise $\tilde\alpha$, let $k_{X,j}^u$ and $k_{X,j}^\ell$ be such that 
\[ \Pr\left(\BetaDist(k_{X,j}^u,n_X+1-k_{X,j}^u)<\tau_j\right)=\tilde\alpha/2=\Pr\left(\BetaDist(k_{X,j}^\ell,n_X+1-k_{X,j}^\ell)>\tau_j\right) , \] 
and similarly for $k_{Y,j}^u$ and $k_{Y,j}^\ell$ (with $n_Y$ instead of $n_X$).  
These $k$ may have fractional (non-integer) values. 
For iteration $i$, CIs with joint $1-\alpha$ coverage probability are constructed with $\tilde\alpha$ chosen such that
\begin{equation}\label{eqn:2s-stepdown-calib}
1-\alpha 
= \Pr\left( \bigcap_{j\in\hat T_i} \left\{D_{X,j}^\ell 
      < D_{Y,j}^u, D_{Y,j}^\ell < D_{X,j}^u\right\} \right) ,
\end{equation}
defining $F_X(X_{n_X:0})\equiv0$, $F_X(X_{n_X:n_X+1})\equiv1$, $X_{n_X:k}\equiv (1-k+\lfloor k\rfloor)X_{n_X:\lfloor k\rfloor}+(k-\lfloor k\rfloor) X_{n_X:\lfloor k\rfloor+1}$ for fractional $k$, and using the distribution
\begin{equation*}
\begin{split}
& \left(D_{X,1}, D_{X,2}-D_{X,1}, \ldots, D_{X,2M_n}-D_{X,2M_n-1}, 1-D_{X,2M_n}\right) \\
&\quad  \sim \textrm{Dir}\left(k_1,k_2-k_1,\ldots,k_{2M_n}-k_{2M_n-1},n_X+1-k_{2M_n}\right)
\end{split}
\end{equation*}
with vector $k=(k_1,\ldots,k_{2M_n})$ containing all the $k_{X,j}^\ell$ and $k_{X,j}^u$ in ascending order so that $k_1\le\cdots\le k_{2M_n}$; and defining all these objects similarly for $Y$, with $\vecf{D}_X\independent\vecf{D}_Y$. 
For iteration $i=0$, reject any $H_{0j}$ for which the CI $[Y_{n_Y:k^\ell_{Y,j}}-X_{n_X:k^u_{X,j}} , Y_{n_Y:k^u_{Y,j}}-X_{n_X:k^\ell_{X,j}}]$ does not contain zero.  
Then, iteratively perform the following steps, starting with $i=1$. 
\begin{enumeratecomp}[Step 1.]
 \item\label{step:2s-stepdown-Ti} Let $\hat T_i=\{j : H_{0j}\textrm{ not yet rejected}\}$.  If $\hat T_i=\emptyset$ or $\hat T_i=\hat T_{i-1}$, then stop. 
 \item Use $\hat T_i$ and \cref{eqn:2s-stepdown-calib} to construct new joint CIs. 
 \item Reject any additional $H_{0j}$ for which the corresponding CI does not contain zero.  
 \item Increment $i$ by one and return to Step \ref{step:2s-stepdown-Ti}. 
\end{enumeratecomp}
For \cref{task:2s-test-FWER-1s}, use the above with only upper (or lower) endpoints. 
\end{method}

\begin{method}\label{meth:Dir-2s-pretest}
For \cref{task:2s-test-FWER-1s}, using notation from \cref{meth:Dir-2s-stepdown}, consider $H_{0j} \colon F_X^{-1}(\tau_j) \ge F_Y^{-1}(\tau_j)$.  
First run a pre-test of $H_{0j}' \colon F_X^{-1}(\tau_j) \le F_Y^{-1}(\tau_j)$ using iteration $i=0$ of \cref{meth:Dir-2s-stepdown} (i.e., the basic method without stepdown) with FWER level $\alpha_p=\alpha/\ln[\ln(\max\{n,15\})]$. 
Then, use \cref{meth:Dir-2s-stepdown} starting with $\hat T_0$ containing all $j$ such that $H_{0j}$ was not rejected by the pre-test. 
\end{method}

We conjecture that under \cref{a:iid,a:F}, \cref{meth:Dir-2s-stepdown,meth:Dir-2s-pretest} have strong control of asymptotic FWER.

\section{Mathematical proofs}
\label{sec:app-pfs}


\subsection{Proof of Proposition \ref{prop:KS-1s-FWER}}
\begin{proof}
The two-sided proof is in the main text. 

The one-sided case follows the same argument (after modifying $D_n^x$ and $D_n^{x,0}$), with the additional inequality that if $H_{0x} \colon F(x) \le F_0(x)$ is true, then $\hat{F}(x)-F_0(x) \le \hat{F}(x)-F(x)$. 
Let $D_n^x \equiv \sqrt{n} \bigl( \hat{F}(x) - F(x) \bigr)$, $D_n \equiv \sup_{x\in\R} D_n^x $, and $c_n(\alpha)$ now satisfies $\Pr\bigl(D_n>c_n(\alpha)\bigr)=\alpha$ in finite samples. 
Let
\begin{equation*}
D_n^{x,0} \equiv \sqrt{n} \bigl( \hat{F}(x) - F_0(x) \bigr), \quad
I \equiv \{x : H_{0x}\textrm{ is true}\} , \quad
D_n^I \equiv \sup_{x\in I} D_n^{x,0} \le \sup_{x\in I} D_n^x , 
\end{equation*}
where the last inequality follows because $F(x)\le F_0(x)$ for $x\in I$, so $D_n^{x,0}\le D_n^x$ (whereas before this was an equality). 
Then, since $I\subseteq\R$, 
\begin{equation*}
\FWER
  \equiv \Pr\bigl( D_n^I > c_n(\alpha) \bigr)
   \le \Pr\bigl( D_n > c_n(\alpha) \bigr) 
   = \alpha .
\end{equation*}
The one-sided argument with $H_{0x} \colon F(x) \ge F_0(x)$ is identical when using $-D_n^x$ instead. 

Alternatively, the results can be derived using the fact that the KS test can be inverted to give a uniform confidence band, and any MTP based on a uniform confidence band has strong control of FWER. 
\end{proof}

\subsection{Proof of Lemma \ref{lem:weak-to-strong}}
\begin{proof}
For the EDFs defined in \cref{eqn:def-EDF-2s}, pointwise, 
$n_X\hat{F}_X(r)\sim\textrm{Binomial}\bigl(n_X,F_X(r)\bigr)$, 
$n_Y\hat{F}_Y(r)\sim\textrm{Binomial}\bigl(n_Y,F_Y(r)\bigr)$, 
and by \cref{a:iid} 
$\hat{F}_X(\cdot)\independent\hat{F}_Y(\cdot)$. 
Since (by assumption) rejection of $H_{0r}$ depends only on $\hat{F}_X(r)$ and $\hat{F}_Y(r)$, the RP depends only on $F_X(r)$ and $F_Y(r)$.  
More generally, the distribution of 
\[ \bigl(n_X\hat{F}_X(r_1),n_X[\hat{F}_X(r_2)-\hat{F}_X(r_1)],\ldots,n_X[\hat{F}_X(r_m)-\hat{F}_X(r_{m-1})]\bigr) \]
is multinomial with parameters $n_X$ and $\bigl(F_X(r_1),F_X(r_2)-F_X(r_1),\ldots,F_X(r_m)-F_X(r_{m-1})\bigr)$, 
and similarly for $Y$. 
Even if set $S$ is a continuum, the distribution of $\{\hat{F}_X(r),\hat{F}_Y(r)\}_{r\in S}$ depends only on $n_X$, $n_Y$, and $\{F_X(r),F_Y(r)\}_{r\in S}$. 
Consequently, RPs of $H_{0r}$ over $r\in S$ depend only on $n_X$, $n_Y$, and $\{F_X(r),F_Y(r)\}_{r\in S}$, too. 

As in \cref{def:FWER}, let $I\equiv\{r : H_{0r}\textrm{ is true}\}$, so $I\subseteq\R$. 
Define $G_X(\cdot)$ such that $G_X(r)=F_X(r)$ if $H_{0r}$ is true and $G_X(r)=F_Y(r)$ if $H_{0r}$ is false. 
Thus, if we had $G_X(\cdot)$ instead of $F_X(\cdot)$, $H_{0r}$ would be true for all $r\in\R$. 
Then, 
\begin{align*}
\FWER
  &\equiv \overbrace{\Pr\left( \textrm{reject $H_{0r}$ for any $r\in I$} \mid F_X, F_Y \right)}^{\textrm{by \cref{def:FWER}}}
\\&=      \overbrace{\Pr\left( \textrm{reject $H_{0r}$ for any $r\in I$} \mid G_X, F_Y \right)}^{\textrm{by above properties and $F_X(r)=G_X(r)$ for $r\in I$}}
\\&\le \overbrace{\Pr\left( \textrm{reject $H_{0r}$ for any $r\in\R$} \mid G_X, F_Y \right)}^{\textrm{by $I\subseteq\R$}} 
\\&\le \alpha 
\end{align*}
by assumption of weak control of FWER at level $\alpha$, since all $H_{0r}$ are true given $G_X(\cdot)$ and $F_Y(\cdot)$. 
\end{proof}

\subsection{Proof of Proposition \ref{prop:KS-2s-FWER}}
\begin{proof}
The method rejects $H_{0r}$ depending only on $\hat{F}_X(r)$ and $\hat{F}(r)$, through their difference $\hat{F}_X(r)-\hat{F}_Y(r)$. 
It is well known that the two-sample KS GOF test controls size, which is equivalent to weak control of FWER. 
Thus, the assumptions of \cref{lem:weak-to-strong} are satisfied, so the method has strong control of FWER. 
\end{proof}

\subsection{Proof of Theorem \ref{thm:Dir-1s-FWER}}
\begin{proof}
The one-sided proof is entirely in the main text. 

For the two-sided case, we have a parallel argument. 
Let 
\[ K^\ell\equiv\{k:F^{-1}(\ell_k)=F_0^{-1}(\ell_k)\} , \quad
   K^u   \equiv\{k:F^{-1}(u_k)   =F_0^{-1}(u_k)   \} , \]
the sets of true hypotheses. 
Then, 
\begin{align*}
\FWER
  &=   \overbrace{1 - \Pr(\textrm{no rejections among }k\in \{K^\ell\cup K^u\})}^{\textrm{by definition of FWER}}
\\&=   \overbrace{1 - \Pr\Bigl( \bigcap_{k\in K^\ell}  F_0^{-1}(\ell_k) \le X_{n:k}
                           \cap\bigcap_{k\in K^u} X_{n:k} \le F_0^{-1}(u_k)
                        \Bigr) }^{\textrm{by definition of $H_{0\ell_k}$, $H_{0u_k}$}}
\\&\le \overbrace{1- \Pr\Bigl( \bigcap_{k\in K^\ell}  F^{-1}(\ell_k) \le X_{n:k}
                           \cap\bigcap_{k\in K^u} X_{n:k} \le F^{-1}(u_k)
                        \Bigr) }^{\textrm{because $F^{-1}(\ell_k)\ge F_0^{-1}(\ell_k)$ for all $k\in K^\ell$, $F^{-1}(u_k)\le F_0^{-1}(u_k)$ for all $k\in K^u$}}
\\&\le \overbrace{1 - \Pr\Bigl( \bigcap_{k=1}^{n} F^{-1}(\ell_k) \le X_{n:k} \le F^{-1}(u_k) \Bigr) }^{\textrm{because }K^\ell,K^u\subseteq\{1,2,\ldots,n\}}
\\&= \overbrace{\alpha }^{\textrm{from \cref{eqn:Dir-1s-2s-tilde}}} 
. \qedhere
\end{align*}
\end{proof}

\subsection{Proof of Theorem \ref{thm:Dir-1s-stepdown-FWER}}
\begin{proof}
Consider the one-sided case with $H_{0\tau} \colon F^{-1}(\tau) \ge F_0^{-1}(\tau)$ for $\tau\in(0,1)$. 
We again focus on the $n$ hypotheses $F^{-1}(\ell_k)\ge F_0^{-1}(\ell_k)$ for $k=1,\ldots,n$ since rejections at other $\tau$ are simply by logical implication of the monotonicity of $F_0^{-1}(\cdot)$ and thus do not affect FWER. 
(The same is true in the two-sided case since it essentially combines lower and upper one-sided MTPs.) 

Let $K\equiv\{k:F^{-1}(\ell_k)\ge F_0^{-1}(\ell_k)\}$, the (true) set of true hypotheses. 
Let $r_{k^*}$ denote the order statistic indices that would be chosen by \cref{meth:Dir-1s-1s-stepdown} when attention is restricted to $k\in K$. 
(Many choices of $r_{k^*}$ still control FWER, but the choice must rely only on the set $K$.) 
Thus, for $k\in K$, the $r_{k^*}$ satisfy $r_{k^*}\le k$ and
\begin{equation}\label{eqn:1s-1s-FWER-pf-calib}
\alpha \ge 1 - \Pr\Bigl( \bigcap_{k\in K} \left\{X_{n:r_{k^*}} \ge F_0^{-1}(\ell_k) \right\} \Bigr) . 
\end{equation}
The stepdown procedure specifies monotonicity in the $r_{k,i}$ and $\hat{K}_i$ over iterations $i=0,1,\ldots$, where $\hat{K}_0=\{1,\ldots,n\}$ and $r_{k,0}=k$.  
Specifically, $\hat{K}_0\supset\hat{K}_1\supset\cdots$, and for each $k$, $r_{k,0}\ge r_{k,1}\ge\cdots$.  
This monotonicity is similar in spirit to (15.37) in \citet{LehmannRomano2005text}. 

The proof is by induction.  
Consider any dataset where 
\[ 1 = \prod_{k\in K}\Ind{ X_{n:r_{k^*}} \ge F_0^{-1}(\ell_k) } . \]
In iteration $i$, if $\hat{K}_i\supseteq K$, then none of the true hypotheses are rejected since $r_{k,i}\ge r_{k^*}$, which implies $X_{n:r_{k,i}}\ge X_{n:r_{k^*}} \ge F_0^{-1}(\ell_k)$.  
Consequently, $\hat{K}_{i+1}\supseteq K$, too.  
Since $\hat{K}_0\supseteq K$, the stepdown procedure does not reject any true hypothesis in such a dataset.  
Along with \cref{eqn:1s-1s-FWER-pf-calib}, this implies $\FWER \le \alpha$. 

The other one-sided case with $H_{0\tau} \colon F^{-1}(\tau) \le F_0^{-1}(\tau)$ is entirely parallel, simply reversing inequalities and replacing $\ell_k$ with $u_k$. 

For the two-sided case with $H_{0\tau} \colon F^{-1}(\tau) = F_0^{-1}(\tau)$, the key is again the monotonicity (by construction) in the $r_{k,\ell,i}$, $r_{k,u,i}$, $\hat{K}_i^\ell$, and $\hat{K}_i^u$. 
Specifically, $\hat{K}_0^\ell\supset\hat{K}_1^\ell\supset\cdots$, $\hat{K}_0^u\supset\hat{K}_1^u\supset\cdots$, and for each $k$, $r_{k,\ell,0}\ge r_{k,\ell,1}\ge\cdots$ and $r_{k,u,0}\le r_{k,u,1}\le\cdots$. 
Let $K^\ell\equiv\{k:F^{-1}(\ell_k)\ge F_0^{-1}(\ell_k)\}$ and $K^u\equiv\{k:F^{-1}(u_k)\le F_0^{-1}(u_k)\}$, the (true) sets of true hypotheses.  
For $k\in K^\ell$, let $r_{k^*,\ell}$ satisfy $r_{k^*,\ell}\le k$; 
for $k\in K^u$,    let $r_{k^*,u}$    satisfy $r_{k^*,u}   \ge k$. 
Also, these satisfy 
\begin{equation}\label{eqn:1s-2s-FWER-pf-calib}
\alpha 
\ge 1 - \Pr\Bigl( \bigcap_{k\in K^\ell} \left\{F_0^{-1}(\ell_k) \le X_{n:r_{k^*,\ell}} \right\}
              \cap\bigcap_{k\in K^u}    \left\{X_{n:r_{k^*,u}}  \le F_0^{-1}(u_k) \right\}
           \Bigr) . 
\end{equation}
As for the one-sided case, by induction, consider any dataset where 
\[ 1 = \prod_{k\in K^\ell}\Ind{ X_{n:r_{k^*,\ell}} \ge F_0^{-1}(\ell_k) }
       \prod_{k\in K^u}   \Ind{ X_{n:r_{k^*,u}}    \le F_0^{-1}(u_k) } . \]
In iteration $i$, if $\{\hat{K}_i^\ell \cup \hat{K}_i^u\}\supseteq\{K^\ell \cup K^u\}$, then none of the true hypotheses are rejected since $r_{k,\ell,i}\ge r_{k^*,\ell}$ and $r_{k,u,i}\le r_{k^*,u}$, which implies 
\[ X_{n:r_{k,\ell,i}} \ge X_{n:r_{k^*,\ell}} \ge F_0^{-1}(\ell_k) , \quad
   X_{n:r_{k, u  ,i}} \ge X_{n:r_{k^*,u   }} \le F_0^{-1}(u_k)
. \]
Consequently, $\{\hat{K}_{i+1}^\ell \cup \hat{K}_{i+1}^u\}\supseteq\{K^\ell \cup K^u\}$, too.  
Since $\hat{K}_0^\ell\supseteq K^\ell$ and $\hat{K}_0^u\supseteq K^u$, the stepdown procedure does not reject any true hypothesis in such a dataset.  
Along with \cref{eqn:1s-2s-FWER-pf-calib}, this implies $\FWER \le \alpha$. 
\end{proof}

\subsection{Proof of Proposition \ref{prop:pretest-FWER} (for proof of Theorem \ref{thm:Dir-1s-pretest-FWER})}
\begin{proposition}\label{prop:pretest-FWER}
Under \cref{a:iid,a:F}, \cref{meth:1s-pretest} has strong control of finite-sample FWER. 
\end{proposition}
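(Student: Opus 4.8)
The plan is to follow the one-sided argument from the proof of \cref{thm:Dir-1s-FWER}, modified for the two features that distinguish \cref{meth:1s-pretest}: the pre-test reverses the direction of the hypotheses, and it tests with the shifted order-statistic indices $r_k$ rather than $k$ itself. I will write the argument for the pre-test of $H_{0\ell_k}\colon F^{-1}(\ell_k)\le F_0^{-1}(\ell_k)$; the $u_k$-version of \cref{meth:1s-pretest} follows by reversing all inequalities and $\min/\max$ and replacing $\ell_k$ with $u_k$. I will rely on three monotonicity facts implied by \cref{a:F} and $\tilde\alpha\in(0,1/2)$: $F_0^{-1}$, $F^{-1}$, and $F$ are increasing; $\ell_k=B^{\tilde\alpha}_{k,n}$ is increasing in $k$, so $r_k=\max\{k':B^{1-\tilde\alpha}_{k',n}\le\ell_k\}$ is non-decreasing in $k$ and hence $X_{n:r_k}$ is non-decreasing in $k$; and for $k\ge\underline{k}$ the index $r_k$ lies in $\{1,\dots,n\}$ by the choice of $\underline{k}$.

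By \cref{def:FWER,def:FWER-control}, strong control at the level $\alpha_p(\tilde\alpha,n)$ produced by the calibration amounts to showing $\Pr(\text{some true }H_{0\tau}\text{ is rejected})\le\alpha_p(\tilde\alpha,n)$ for every $F$ satisfying \cref{a:F}, where $\tau$ is true iff $F^{-1}(\tau)\le F_0^{-1}(\tau)$; I fix such an $F$ and use nothing else about which hypotheses hold. The crux is the containment of $\{\text{some true }H_{0\tau}\text{ is rejected}\}$ in $\bigcup_{k=\underline{k}}^{n}\{X_{n:r_k}>F^{-1}(\ell_k)\}$. If a true $H_{0\tau}$ is rejected then, since $X_{n:r_k}$ is non-decreasing in $k$, the rule $\max\{X_{n:r_k}:\ell_k\le\tau\}>F_0^{-1}(\tau)$ reduces to $X_{n:r_m}>F_0^{-1}(\tau)$ with $m\equiv\max\{k\ge\underline{k}:\ell_k\le\tau\}$ (the set is nonempty exactly because a rejection occurred). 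Truth of $H_{0\tau}$ gives $F_0^{-1}(\tau)\ge F^{-1}(\tau)$, and $\ell_m\le\tau$ gives $F^{-1}(\tau)\ge F^{-1}(\ell_m)$, so $X_{n:r_m}>F^{-1}(\ell_m)$, which is the claimed containment.

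Taking complements, $\FWER\le\Pr\bigl(\bigcup_{k=\underline{k}}^{n}\{X_{n:r_k}>F^{-1}(\ell_k)\}\bigr)=1-\Pr\bigl(\bigcap_{k=\underline{k}}^{n}X_{n:r_k}\le F^{-1}(\ell_k)\bigr)=1-\Pr\bigl(\bigcap_{k=\underline{k}}^{n}F(X_{n:r_k})\le\ell_k\bigr)$, the last step using that $F$ is increasing with $F(F^{-1}(\ell_k))=\ell_k$. By \cref{thm:Wilks}, the joint law of $(F(X_{n:1}),\dots,F(X_{n:n}))$ is $\textrm{Dir}^*(1,\dots,1;1)$ and does not depend on $F$, so this last probability equals $1-\alpha_p(\tilde\alpha,n)$ by the definition of $\alpha_p(\tilde\alpha,n)$ in \cref{meth:1s-pretest}. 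Hence $\FWER\le\alpha_p(\tilde\alpha,n)$ uniformly over $F$, i.e., strong control.

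I expect the containment step to be the part requiring care. Unlike in \cref{thm:Dir-1s-FWER}, one cannot pass to a ``true core'' index set, because truth of $H_{0\tau}$ does not imply truth of $H_{0\ell_m}$ at the smaller index $\ell_m\le\tau$: the function $F^{-1}(\cdot)-F_0^{-1}(\cdot)$ may be positive at $\ell_m$ yet nonpositive at $\tau$. The remedy is never to refer to whether $H_{0\ell_m}$ is true, and instead to use only the chain $X_{n:r_m}>F_0^{-1}(\tau)\ge F^{-1}(\tau)\ge F^{-1}(\ell_m)$, which is valid regardless; with that in hand the rest is the routine Dirichlet calibration used elsewhere in the paper.
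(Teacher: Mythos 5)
Your proposal is correct and is essentially the paper's own argument written out in full: the paper proves this by noting the pre-test is an MTP based on a one-sided uniform confidence band calibrated (via Wilks' distribution-free result) to cover $F(\cdot)$ with probability at least $1-\alpha_p$, so any false rejection requires a band violation, which is exactly your containment $\{\text{some true }H_{0\tau}\text{ rejected}\}\subseteq\bigcup_{k=\underline{k}}^{n}\{X_{n:r_k}>F^{-1}(\ell_k)\}$ followed by the calibration identity. Your explicit chain $X_{n:r_m}>F_0^{-1}(\tau)\ge F^{-1}(\tau)\ge F^{-1}(\ell_m)$ just makes precise the coverage-implies-no-false-rejection step that the paper states in one line.
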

\begin{proof}
The MTP is based on a one-sided uniform confidence band, so it strongly controls FWER by the same argument as in the proof of \cref{thm:Dir-1s-FWER}. 
That is, the uniform confidence band covers the entire $F(\cdot)$ with at least $1-\alpha$ probability, so it covers any subset of $F(\cdot)$ with at least $1-\alpha$ probability, too. 
Thus, FWER is below $1-(1-\alpha)=\alpha$. 
\end{proof}

\subsection{Proof of Theorem \ref{thm:Dir-1s-pretest-FWER}}
\begin{proof}
The stated FWER bound is conservative, relying on the following two worst-case assumptions.  
First, assume that any false pre-test rejection leads to a false rejection of the overall test.  
Second, assume that \cref{meth:Dir-1s-1s}, i.e., the test without using a pre-test, never falsely rejects when the pre-test falsely rejects.  
Then, the worst-case (i.e., upper bound for) FWER is $\alpha+\alpha_p$, where the $\alpha_p$ is guaranteed by \cref{prop:pretest-FWER}. 
\end{proof}

\subsection{Proof of Theorem \ref{thm:Dir-2s-FWER}}
\begin{proof}
To apply \cref{lem:weak-to-strong}, the method must reject $H_{0r}$ depending only on $\hat{F}_X(r)$ and $\hat{F}_Y(r)$, and it must have weak control of FWER. 
First, by construction, as seen in \cref{eqn:def-hat-ell-u-r}, given $n_X$, $n_Y$, and $\alpha$, the method will reject $H_{0r}$ depending only on $\hat{F}_X(r)$ (which determines the $\hat{u}_X(r)$ and $\hat{\ell}_X(r)$) and on $\hat{F}_Y(r)$ (which determines the $\hat{u}_Y(r)$ and $\hat{\ell}_Y(r)$). 

Second, for the two-sided MTP, weak control of FWER is by construction (up to simulation error). 
Weak control of FWER is equivalent to size control of the corresponding GOF test. 
When $F_X(\cdot)=F_Y(\cdot)$, the distribution of the ordering of the $X$ and $Y$ values is distribution-free given \cref{a:iid,a:F}; this (finite-sample) distribution is used explicitly to control the probability of any $H_{0r}$ being rejected at level $\alpha$. 

For the one-sided case, consider the GOF null $H_0 \colon F_X(\cdot) \le F_Y(\cdot)$. 
This is rejected based on the ordering of the $X_i$ and $Y_j$, which is the same as the ordering of the $F_Y(X_i)$ and $F_Y(Y_j)$. 
By construction, the ordering of $F_X(X_i)\stackrel{iid}{\sim}\UnifDist(0,1)$ and $F_Y(Y_j)$ will lead to rejection of $H_0$ with less than or equal to $\alpha$ probability. 
Under $H_0$, $F_Y(X_i)\ge F_X(X_i)$ for any $X_i$, so rejection of $H_0$ is even less likely with $F_Y(X_i)$ and size remains below $\alpha$. 
This corresponds to the intuition that $F_X(\cdot)=F_Y(\cdot)$ is the least favorable configuration (i.e., results in highest RP) among all distributions satisfying $H_0 \colon F_X(\cdot) \le F_Y(\cdot)$. 

Since the assumptions are met, \cref{lem:weak-to-strong} gives strong control of FWER. 
\Cref{sec:comp-discreteness} discusses the suggested $0.0001$ adjustment of $\tilde\alpha$ in light of possible simulation error and discontinuity in the mapping from $\tilde\alpha$ to $\alpha$. 
\end{proof}

\subsection{Proof of Theorem \ref{thm:RD}}
\begin{proof}
By Theorem 4.1 of \citet{CanayKamat2017},%
\footnote{The full continuity assumed in \citet{CanayKamat2017} is clearly not necessary, otherwise the RHS of the result would not include the $+$ and $-$ superscripts.} 
\begin{equation}\label{eqn:RD-Thm4.1}
\Pr\left( \bigcap_{j=1}^{q} \{ Y^{-}_{[j]} \le y^{-}_j \}  
          \bigcap_{j=1}^{q} \{ Y^{+}_{[j]} \le y^{+}_j \} \right) 
= \prod_{j=1}^{q} \lim_{x \uparrow x_0}
                  F_{Y|X}(y^{-}_j \mid x)
  \prod_{j=1}^{q} F_{Y|X}(y^{+}_j \mid x_0)
 +o(1) 
\end{equation}
as $n\to\infty$ for any $(y^{-}_1, \ldots, y^{-}_q, y^{+}_1, \ldots, y^{+}_q) \in \R^{2q}$. 
Let
\begin{equation*}
\vecf{Y}^{-}_n \equiv (Y^{-}_{[1]}, \ldots, Y^{-}_{[q]}) , \quad
\vecf{Y}^{+}_n \equiv (Y^{+}_{[1]}, \ldots, Y^{+}_{[q]}) . 
\end{equation*}
The result in \cref{eqn:RD-Thm4.1} is equivalent to 
$(\vecf{Y}^{-}_{n}, \vecf{Y}^{+}_{n}) \dconv (\vecf{Y}^{-}, \vecf{Y}^{+})$, where 
the $q$ elements of $\vecf{Y}^{-}$ are sampled iid from $\lim_{x \uparrow x_0} F_{Y|X}(\cdot \mid x)$, 
the $q$ elements of $\vecf{Y}^{+}$ are sampled iid from $F_{Y|X}(\cdot \mid x_0)$, and 
$\vecf{Y}^{-} \independent \vecf{Y}^{+}$. 

By the portmanteau lemma \citep[e.g.,][Lemma 2.2(i,vii)]{vanderVaart1998}, \cref{eqn:RD-Thm4.1} is equivalent to 
\begin{equation}\label{eqn:RD-Thm4.1-Borel}
    \Pr\bigl( (\vecf{Y}^{-}_{n}, \vecf{Y}^{+}_{n}) \in B \bigr)  
\to \Pr\bigl( (\vecf{Y}^{-}    , \vecf{Y}^{+})     \in B \bigr) 
\end{equation}
for any continuity set $B$ of $(\vecf{Y}^{-}, \vecf{Y}^{+})$. 
The result follows by setting $B$ equal to the rejection region for the MTP, specifically the set of values of $(\vecf{y}^{-},\vecf{y}^{+})$ for which any true $H_{0r}$ is (falsely) rejected. 
For such $B$, the finite-sample strong control of FWER in \cref{thm:Dir-2s-FWER} guarantees 
\begin{equation*}
\Pr\bigl( (\vecf{Y}^{-}, \vecf{Y}^{+}) \in B \bigr)
\le \alpha , 
\end{equation*}
so 
$\lim_{n\to\infty} 
 \Pr\bigl( (\vecf{Y}^{-}_n, \vecf{Y}^{+}_n) \in B \bigr) 
 \le \alpha$. 
\end{proof}

\subsection{Proof of Theorem \ref{thm:CQD}}
\begin{proof}
Since $\Pr(\vecf{X}^D=\vecf{x}^D_0)>0$, the number of observations in the subsample with $\vecf{X}^D_i=\vecf{x}^D_0$ goes to infinity almost surely. 
(By the Borel--Cantelli Lemma, it is almost surely of order $n$.) 
Thus, below we consider $\vecf{X}$ to contain only the continuous random variables. 

The proof largely parallels that of \cref{thm:RD}, with only minor modifications. 
Making the trivial change to allow vector $\vecf{X}$ (of finite, fixed dimension) instead of scalar $X$, Theorem 4.1 in \citet{CanayKamat2017} states that if $F_{Y|\vecf{X},T}(y \mid \vecf{x}, 0)$ is continuous in $\vecf{x}$ at $\vecf{x}=\vecf{x}_0$ for all $y$ in the support of $Y$ (like our theorem's Assumption (i)), and 
if 
$\Pr( \vecf{X} \in [-\epsilon,\epsilon]^{d} \mid T=0)>0$ 
for all $\epsilon>0$ (our Assumption (ii)), 
then as $n\to\infty$, for any $(y_1,\ldots,y_{q_0})\in\R^{q_0}$, 
\begin{equation}\label{eqn:CQD-Thm4.1}
\Pr\left( \bigcap_{j=1}^{q_0} \{ Y_{[j],T=0} \le y_j \} \right) 
= \prod_{j=1}^{q_0} F_{Y|\vecf{X},T}(y_j \mid \vecf{x}_0, 0)
 +o(1) , 
\end{equation}
i.e., $\vecf{Y}_{n,T=0} \dconv \vecf{Y}_{T=0}$, a vector whose ${q_0}$ elements are sampled iid from $F_{Y|\vecf{X},T}(\cdot \mid \vecf{x}_0 , 0)$. 
By the portmanteau lemma \citep[e.g.,][Lemma 2.2(i,vii)]{vanderVaart1998}, \cref{eqn:CQD-Thm4.1} is equivalent to 
\begin{equation}\label{eqn:CQD-Thm4.1-Borel}
\Pr( \vecf{Y}_{n,T=0} \in B )  \to  \Pr( \vecf{Y}_{T=0} \in B ) 
\end{equation}
for any continuity set $B$ of $\vecf{Y}_{T=0}$. 

The preceding arguments apply similarly to $T=1$, with parallel results like $\vecf{Y}_{n,T=1} \dconv \vecf{Y}_{T=1}$. 
Under iid sampling, $\vecf{Y}_{n,T=1} \independent \vecf{Y}_{n,T=0}$, so 
\begin{equation}\label{eqn:CQD-Thm4.1-Borel-2}
    \Pr\bigl( (\vecf{Y}_{n,T=0},\vecf{Y}_{n,T=1}) \in B \bigr)  
\to \Pr\bigl( (\vecf{Y}_{  T=0},\vecf{Y}_{  T=1}) \in B \bigr) 
\end{equation}
for any continuity set $B$ of $(\vecf{Y}_{  T=0},\vecf{Y}_{  T=1})$. 

The result follows by setting $B$ in \cref{eqn:CQD-Thm4.1-Borel-2} equal to the rejection region for the MTP, specifically the set of values of $(\vecf{y}_{T=0},\vecf{y}_{T=1})$ for which any true $H_{0r}$ is (falsely) rejected. 
For such $B$, the finite-sample, strong control of FWER in \cref{thm:Dir-2s-FWER} guarantees 
\begin{equation*}
\Pr\bigl( (\vecf{Y}_{  T=0},\vecf{Y}_{  T=1}) \in B \bigr)
\le \alpha , 
\end{equation*}
so $\lim_{n\to\infty} \Pr\bigl( (\vecf{Y}_{n,T=0},\vecf{Y}_{n,T=1}) \in B \bigr) \le \alpha$. 
\end{proof}

\section{Computational details}
\label{sec:app-comp}

We discuss some computational details of our code's implementation of our methods, specifically the simulation of the mapping from $\tilde\alpha$ to $\alpha$. 

\subsection{Calibration of \texorpdfstring{$\tilde\alpha$}{\~a}}\label{sec:comp-calib} 

Consider a given $n$.  
The joint distribution of the uniform order statistics is
\begin{equation*}
\left(U_{n:1},U_{n:2}-U_{n:1},U_{n:3}-U_{n:2},\ldots,U_{n:n}-U_{n:n-1},1-U_{n:n}\right)
  \sim \textrm{Dirichlet}\overbrace{\left(1,\ldots,1\right)}^\textrm{$n+1$} .
\end{equation*}
We simulate this with repeated random draws $U_i^{(m)}\stackrel{iid}{\sim}\UnifDist(0,1)$ for observations $i=1,\ldots,n$ in samples $m=1,\ldots,M$. 
Given $\tilde\alpha$, which determines all $\ell_k$ and $u_k$, the simulated two-sided FWER (for example) is
\begin{equation}\label{eqn:coverage-1s}
\hat\alpha
  = 1 - \frac{1}{M}\sum_{m=1}^M \Ind{\ell_1 < U_{n:1}^{(m)} < u_1} 
                              \times \cdots \times
  \Ind{\ell_n < U_{n:n}^{(m)} < u_n} .
\end{equation}

While \cref{eqn:coverage-1s} alone is sufficient for global (GOF) $p$-value computation, we need to search for the $\tilde\alpha$ that leads to a specific desired $\alpha$ for the simulations informing \cref{fact:alpha-tilde-rate}.  
Given search tolerance $T$ (see \cref{sec:comp-accuracy}), we stop the search over $\tilde\alpha$ if $\abs{\hat\alpha-\alpha}<T$.  
Otherwise, if $\hat\alpha<\alpha$ then $\tilde\alpha$ is increased, and if $\hat\alpha>\alpha$ then $\tilde\alpha$ is decreased. 
Since $\hat\alpha$ is a monotonic function of $\tilde\alpha$, which is a scalar, this is an easy search problem.  
Note that the random draws do not need to be repeated each iteration, only the $2n$ beta quantile function calls; or, the simulation is easily parallelized by slicing the $M$ samples across CPUs. 

With two samples, the only difference is \cref{eqn:coverage-1s}.  
The GOF null $H_0 \colon F_X(\cdot) = F_Y(\cdot)$ is rejected whenever there is at least one point where the band for one distribution lies strictly above the other band, i.e., at least one $H_{0r}$ is rejected. 
This depends on $\tilde\alpha$ and the relative ordering of values in the two samples, but not on the sample values themselves (more below).  
Because of this difference, with small sample sizes, there can be jumps of bigger than $T$ in $\hat\alpha$ as a function of $\tilde\alpha$, in which case we pick $\tilde\alpha$ slightly smaller than the point of discontinuity.  

The fact that the test's rejection is determined only by the ordering of values from the two samples (rather than the values themselves) is apparent from the construction of the test, as discussed in the main text. 
Each ordering of $X$ and $Y$ values is equally likely under $H_0 \colon F_X(\cdot) = F_Y(\cdot)$ and \cref{a:iid,a:F}; as usual, with larger sample sizes, permutations are randomly sampled rather than fully enumerated.

\subsection{Calibration accuracy}
\label{sec:comp-accuracy} 

As introduced in \cref{sec:comp-calib}, to search for the $\tilde\alpha$ that maps to a desired $\alpha$, the required number of Dirichlet draws ($M$) and the tolerance parameter ($T$) must be specified.  They may be determined given the desired overall simulation error.  
Given $\alpha$, we chose to determine $\tilde\alpha$ such that the true FWER would be within $c\alpha$ of the desired $\alpha$ for some small $c>0$, like $c=\calibdecerrfiveten$ for $\alpha=0.05$ implying FWER of $0.05\pm\caliberrfive$.  
As in \cref{sec:comp-calib}, the search stops when $\lvert \hat\alpha-\alpha \rvert < T$.  The $M$ Dirichlet draws are iid, so the total number of draws with a familywise error follows a binomial distribution.  
Since $M$ is large, the normal approximation is quite accurate.  
We want the simulation to have a high probability, like $1-p=0.95$, of estimating $\hat\alpha>\alpha+T$ when $\tilde\alpha$ yields a true FWER above $\alpha(1+c)$. 
If the true FWER is $\alpha(1+c)$, then the total number of simulated familywise errors follows a $\textrm{Binomial}\left(M,\alpha(1+c)\right)$ distribution, so $\hat\alpha\stackrel{a}{\sim}\Normal\bigl(\alpha(1+c), \alpha(1+c)\left[1-\alpha(1+c)\right]/M \bigr)$, and we choose $T$ and $M$ to equate $T$ with the $p$-quantile of this distribution:
\begin{align*}
\alpha+T
  &= \alpha(1+c) + \Phi^{-1}(p)\sqrt{\alpha(1+c)(1-\alpha(1+c))} / \sqrt M , \\
T &= c\alpha - \Phi^{-1}(1-p)\sqrt{\alpha(1+c)(1-\alpha(1+c))} / \sqrt M , \\
M &= \left(\frac{\Phi^{-1}(1-p)\sqrt{\alpha(1+c)(1-\alpha(1+c))}}{c\alpha-T}\right)^2 .
\end{align*}
For $\alpha\in\{0.10,0.05\}$, we used $M=\NDRAWSfiveten$, $p=0.05$, and $c=\calibdecerrfiveten$, leading to $T\approx0.00019$ for $\alpha=0.05$ and $T\approx0.00089$ for $\alpha=0.10$, as seen in the lookup table. 
For $\alpha=0.01$, we used $M=\NDRAWSone$, $p=0.05$, and $c=\calibdecerrone$, leading to $T\approx0.00033$. 
The foregoing discussion applies equally to one-sample and two-sample inference.

\subsection{Two-sample adjustment for discreteness}
\label{sec:comp-discreteness}

In the two-sample setting, the mapping from $\tilde\alpha$ to $\alpha$ is still monotonic but not continuous: it is a step function. 
Consequently, we suggest subtracting a small amount like $0.0001$ from whichever $\tilde\alpha$ is found by the numerical solver. 
Additionally, in our lookup table of pre-computed values, we report both the smaller and larger $\alpha$ values at the discontinuity, to show how big the possible FWER inflation is if the simulation error is large enough that actually the next-highest $\alpha$ is the true FWER. 

The subtraction of $0.0001$ from the simulated $\tilde\alpha$ is because simulation error does not necessarily go to zero as the number of simulations goes to infinity, because the number of attainable $\alpha$ is finite. 
That is, the mapping from $\tilde\alpha$ to FWER is a step function, so if one picks the largest possible $\tilde\alpha$ such that FWER is below $\alpha$, even an infinitessimal amount of simulation error could mean that actual FWER is above $\alpha$. 
For example, if actual FWER equals $0.08+0.04\Ind{\tilde\alpha\ge0.03}$, but simulated FWER is $0.08+0.04\Ind{\tilde\alpha>0.03}$, then $\tilde\alpha=0.03$ appears to control FWER below $\alpha=0.1$ in the simulation, but actual FWER is $0.12$, above $\alpha$. 
Subtracting any small, fixed amount from the simulated $\tilde\alpha$ is sufficient to overcome this problem (with probability approaching one) as the number of simulation draws grows arbitrarily large. 


\section{Additional simulations}\label{sec:app-sim}

\subsection{Power compared to KS-based methods}\label{sec:sim-pt-pwr}

Earlier, \cref{fig:sim-1s-pt,fig:sim-2s-pt} showed simulation results on the uneven sensitivity of KS-based MTPs and the (relatively) even sensitivity of the Dirichlet MTPs, in terms of pointwise type I error rates. 
Naturally, those differences translate into corresponding differences in pointwise power.  
\Cref{fig:sim-1s-power-shift,fig:sim-1s-power-spread} show patterns similar to \cref{fig:sim-1s-pt}: the KS-based MTP has the highest (among the three methods) pointwise power against deviations near the median of a distribution and lowest pointwise power in the tails, and the weighted KS-based MTP is usually the opposite (depending whether the null is above or below the true distribution; see below). 
The Dirichlet MTP has the highest pointwise power against deviations in between the middle and the tails, and it never has the lowest.  

\Cref{fig:sim-1s-power-shift,fig:sim-1s-power-spread} show examples of pointwise power for two-sided $H_{0\tau} \colon F^{-1}(\tau) = F_0^{-1}(\tau)$ over $\tau\in(0,1)$.  
The left column graphs show $F_0\bigl(F^{-1}(\tau)\bigr)$ (dashed line). 
If $H_0$ were true, then $F_0\bigl(F^{-1}(\tau)\bigr)=\tau$ (solid line). 
Similar to \cref{fig:sim-1s-pt}, the right column graphs show RPs due to each order statistic. 

\Cref{fig:sim-1s-power-shift} shows $X_i\stackrel{iid}{\sim}\Normalp{0.3}{1}$ when the null is $\Normalp{0}{1}$.  
As the left column shows, this leads to larger deviations in the middle of the distribution than in the tails.  
The largest peak in pointwise power is in the middle of the distribution for KS: this is where both the deviations are largest and the KS pointwise size is largest.  
The Dirichlet pointwise power peaks in a similar range, but at a lower level, corresponding to its lower pointwise size in that range.  
The weighted KS pointwise power peaks in the lower tail, at a much lower level since the deviations are smaller. 

\begin{figure}[htb]
\centering
\hfill
\includegraphics[width=0.42\textwidth,clip=true,trim=20 25 10 82]{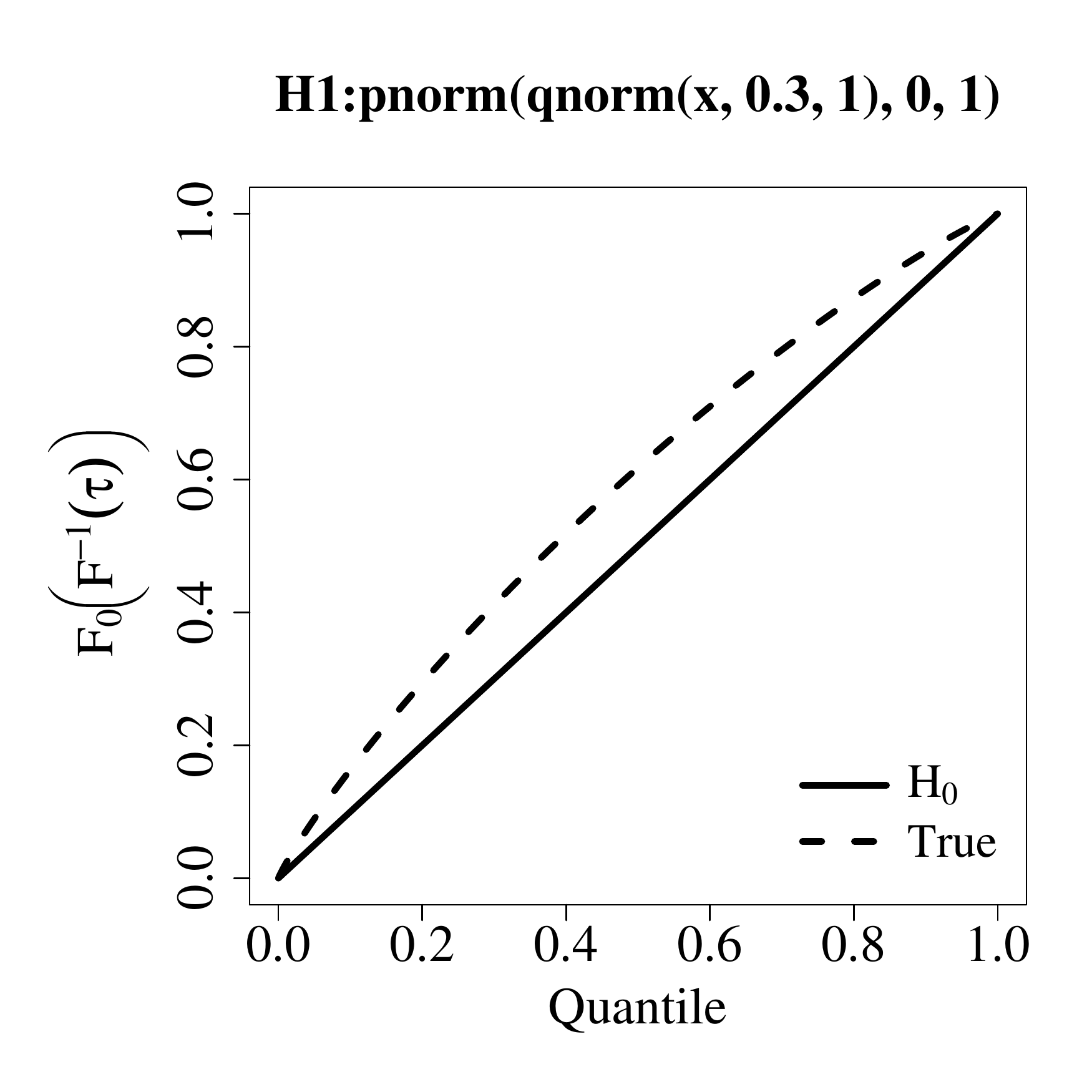}%
\hfill%
\includegraphics[width=0.4\textwidth,clip=true,trim=0 0 0 82]{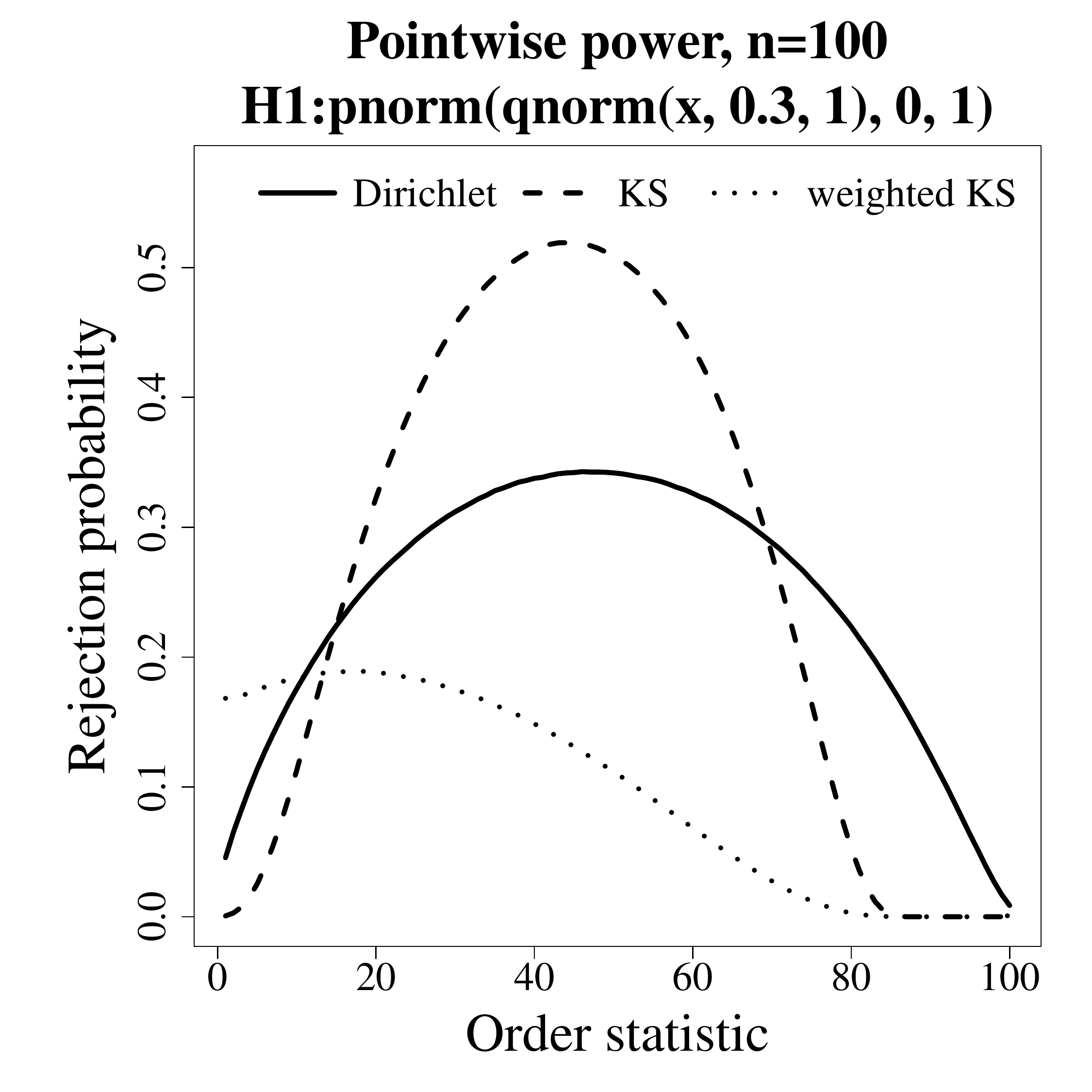} %
\hfill\null
\caption{\label{fig:sim-1s-power-shift}Simulated one-sample, two-sided RPs by order statistic when all $H_{0\tau}$ are false, $F_0=\Normalp{0}{1}$, $X_i\stackrel{iid}{\sim}\Normalp{0.3}{1}$, FWER $\alpha=0.1$, $n=100$, $10^6$ replications.}
\end{figure}

In \cref{fig:sim-1s-power-shift}, the effect having pointwise equal-tailed (like Dirichlet) or symmetric (like KS) tests is apparent.  
Even though the weighted KS has greater (than Dirichlet) two-sided pointwise type I error rate in the upper tail, it has essentially zero power in the upper tail in the examples provided, whereas Dirichlet has substantial power.  
This is because $F_0(x)>F(x)$ in the upper tail; regardless of weighting, KS-based MTPs (or tests) are insensitive to such deviations, whereas the Dirichlet MTP is sensitive to both upper and lower deviations. 

In the row of \cref{fig:sim-1s-power-spread} where $\sigma=1.2$, the weighted KS again has pointwise power near zero even in the tails. 
This is an example of the same general feature seen in \cref{fig:sim-1s-power-spread}: because of being pointwise symmetric instead of equal-tailed, the KS approach (whether weighted or not) has low power against a null with smaller variance than the DGP. 
The Dirichlet has two pointwise power peaks, reflecting the varying distance between the two curves in the corresponding left column graph. 
The KS has a much smaller pointwise power peak surrounding the median, where the deviations are small (and even zero right at the median) but its sensitivity is highest.

For the graph in \cref{fig:sim-1s-power-spread} with $\sigma=0.7$, the weighted KS pointwise power has the highest peak, in the tails (and highest at the extremes) where the deviations are large and its sensitivity is large.  
The Dirichlet has a somewhat smaller peak, also in the tails but not at the extremes. 
Even smaller and closer to the middle is the KS peak.  
The weighted KS and KS can have very high peaks since their peak pointwise type I error rate is higher than Dirichlet's (which has no peak), but they perform poorly when their peak pointwise type I error rate coincides with low deviations from the null hypothesis. 
The Dirichlet is more even-keeled, yet it can still have the highest peak pointwise power of the three methods, especially if the deviations are largest in between the tails and median (where its pointwise size is largest), a case not even shown in these graphs.

\begin{figure}[htb]
\centering
\hfill
\includegraphics[width=0.46\textwidth,clip=true,trim=20 25 10 80]{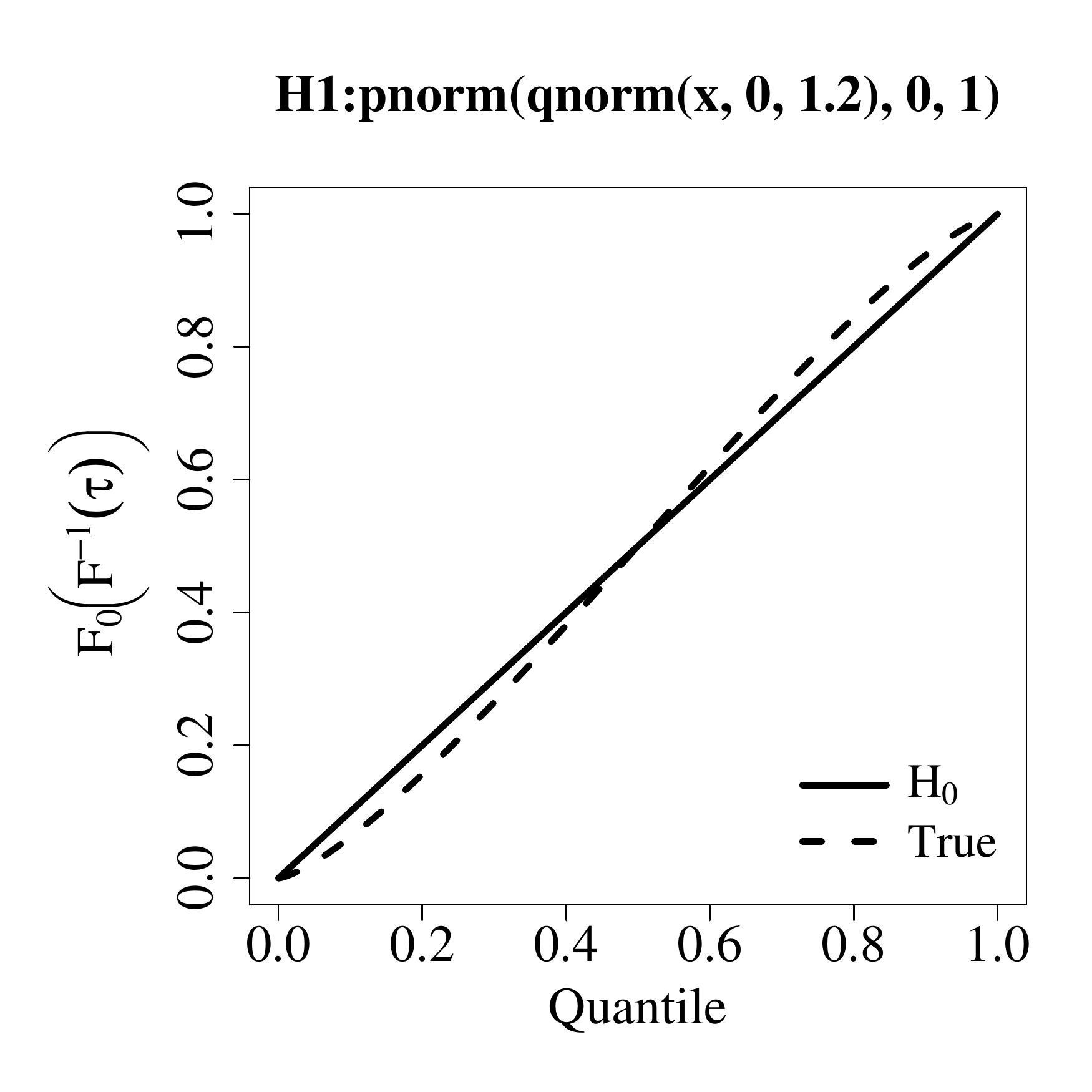}%
\hfill%
\includegraphics[width=0.44\textwidth,clip=true,trim=0 0 0 80]{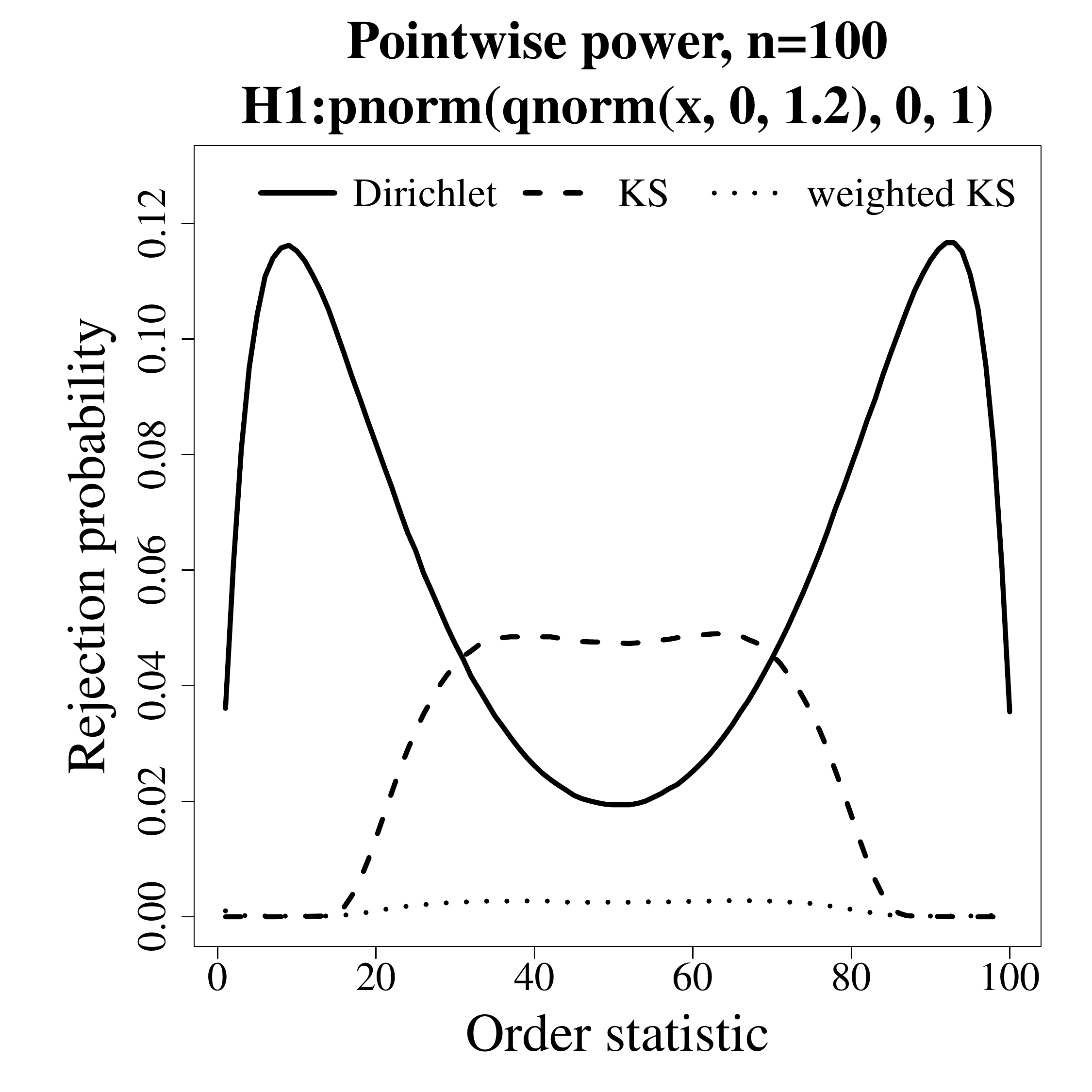} %
\hfill\null
\\
\null%
\hfill
\includegraphics[width=0.46\textwidth,clip=true,trim=20 25 10 80]{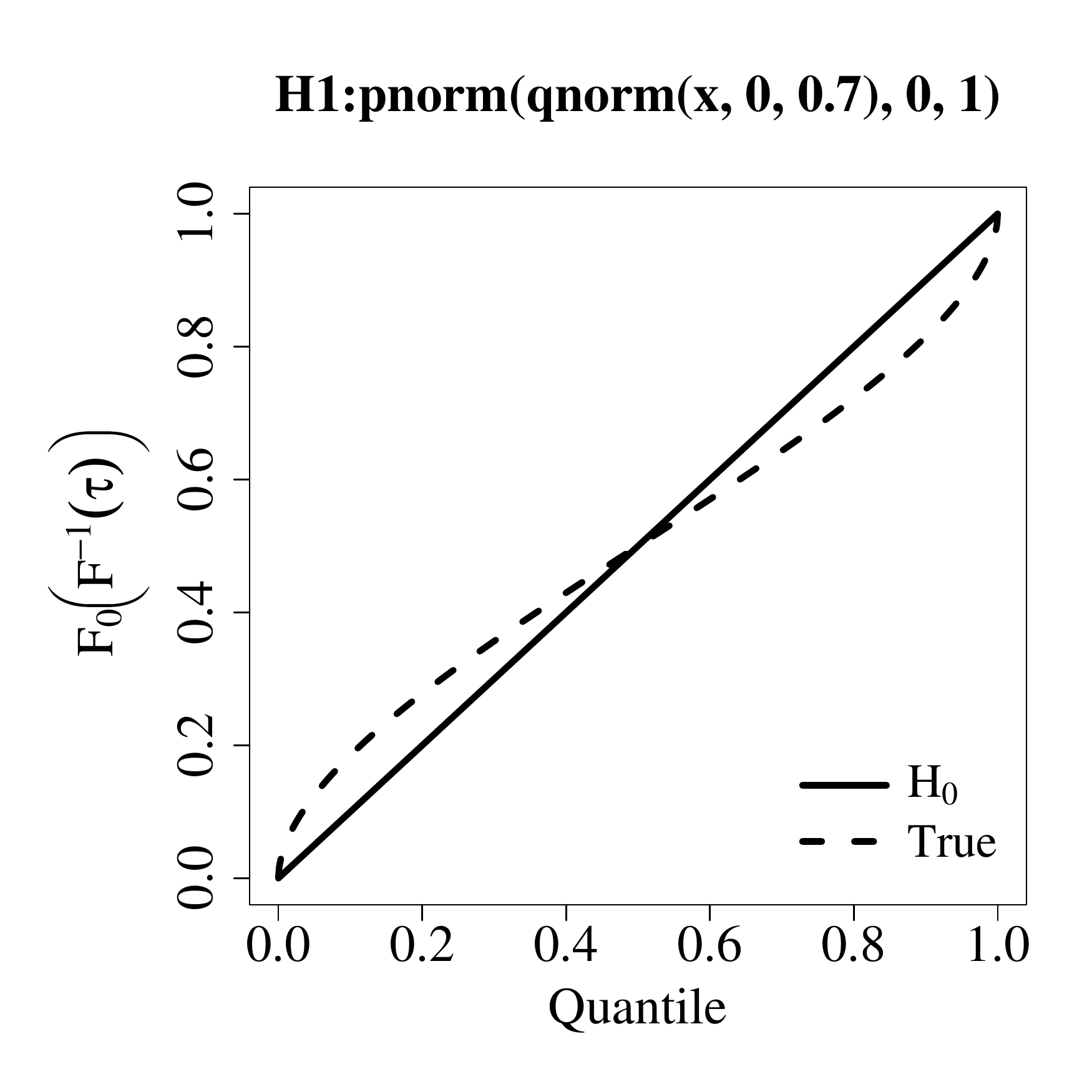}%
\hfill%
\includegraphics[width=0.44\textwidth,clip=true,trim=0 0 0 80]{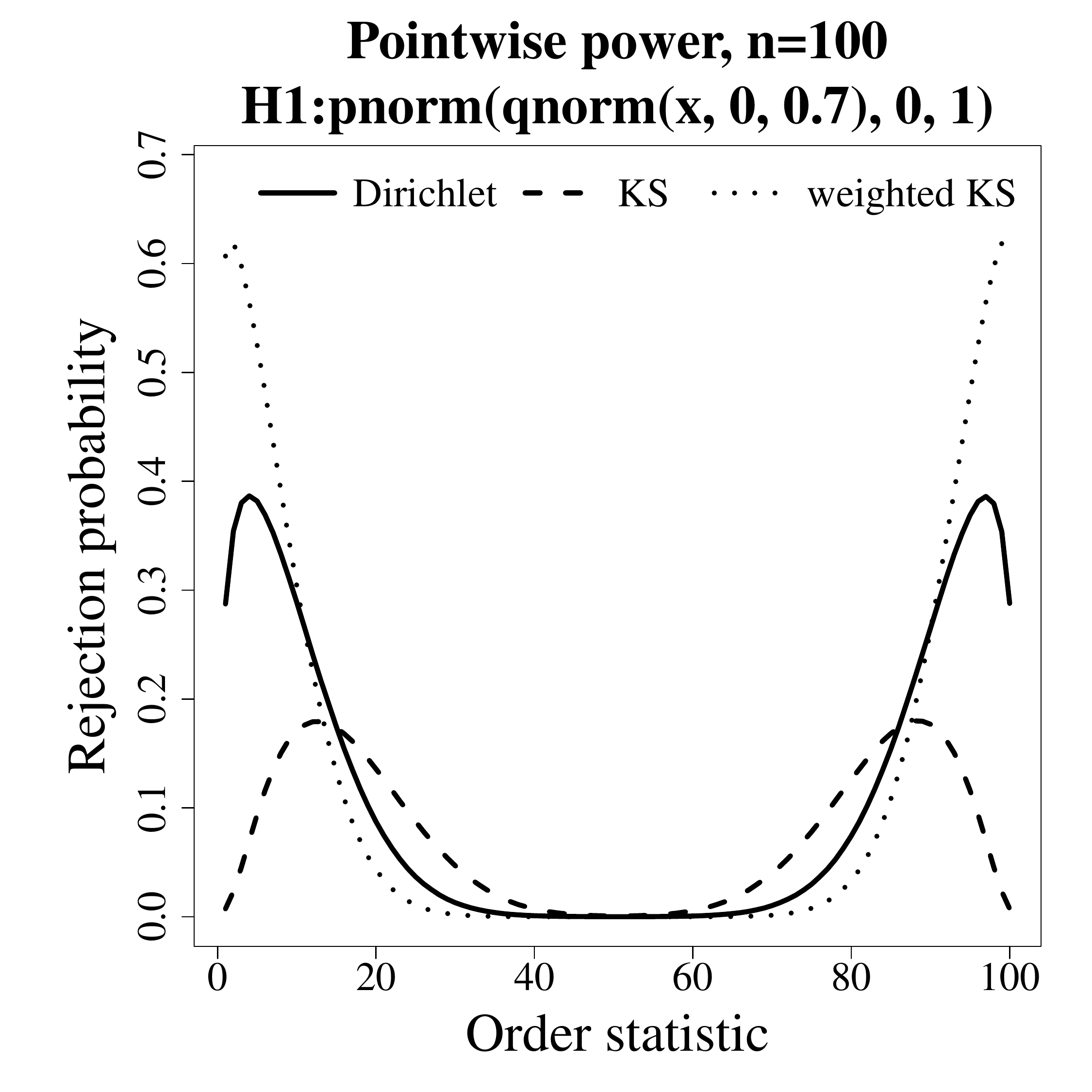}%
\hfill\null
\caption{\label{fig:sim-1s-power-spread}Simulated one-sample, two-sided RPs by order statistic when all $H_{0\tau}$ are false (except $\tau=0.5$), 
$F_0=\Normalp{0}{1}$, FWER $\alpha=0.1$, $n=100$, $10^6$ replications; $X_i\stackrel{iid}{\sim}\Normalp{0}{\sigma^2}$ 
with $\sigma=1.2$ (top) or $\sigma=0.7$ (bottom).}
\end{figure}

\Cref{tab:sim-1s-power} shows global power for one-sample, two-sided GOF tests of $H_0 \colon F(\cdot) = F_0(\cdot)$ with $F_0=\Normalp{0}{1}$ and $X_i\stackrel{iid}{\sim}\Normalp{\mu}{\sigma^2}$. 
For the Dirichlet, KS, and weighted KS tests alike, this is equivalent to testing $H_0 \colon F_0(X_i) \stackrel{iid}{\sim} \UnifDist(0,1)$, or $H_0 \colon F_0\bigl(F^{-1}(\tau)\bigr) = \tau$.%
\footnote{When the population CDF is $F(\cdot)$, then $F_0(X_i)=F_0(F^{-1}(F(X_i)))=F_0(F^{-1}(U_i))$, $U_i\stackrel{iid}{\sim}\UnifDist(0,1)$.} 
For pure location shifts with $\mu\ne0$ and $\sigma=1$, the deviations (of $F_0\bigl(F^{-1}(\tau)\bigr)$ from $\tau$) are largest near the middle of the distribution, where KS has the largest pointwise power.  
The weighted KS is not very sensitive to such deviations, so it has the worst power by far.  
The Dirichlet power is below KS, but only by a couple percentage points.  
With $\mu=0$ and $\sigma=0.7$, the largest vertical deviations of $F_0\bigl(F^{-1}(\tau)\bigr)$ are in the tails (i.e., near zero and one). 
Consequently, the weighted KS has the best power.  
The KS test has significantly lower power, but the Dirichlet is close to the weighted KS. 
With $\mu=0$ and $\sigma=0.8$, Dirichlet power is again between weighted KS (best) and KS (worst). 
When $\mu=0$ and $\sigma=1.2$, the deviations of $F_0\bigl(F^{-1}(\tau)\bigr)$ are no longer largest at the extremes. 
This poses a problem for the weighted KS, and its power is even lower than its size.  
Even though there is zero deviation at $\tau=0.5$, KS has better power than weighted KS in this case because it has better pointwise power around the upper and lower quartiles. 
The Dirichlet pointwise power is even higher in those regions, so its global power is far above either KS or weighted KS.

\begin{table}[htbp]
\centering
\caption{\label{tab:sim-1s-power}Simulated global power, one-sample, two-sided, $\alpha=0.1$, $n=100$.}
\begin{threeparttable}
\begin{tabular}{ccccc}
\toprule
$\mu$ & $\sigma$ & Dirichlet & KS & weighted KS \\
\midrule
0.3 & 1.0 & 80.5 & 82.4 & 62.4 \\
0.2 & 1.0 & 49.4 & 52.2 & 33.9 \\
0.0 & 0.7 & 92.0 & 65.6 & 98.6 \\
0.0 & 0.8 & 50.1 & 26.7 & 76.6 \\
0.0 & 1.2 & 64.2 & 25.5 & \phantom{9}2.8 \\
\bottomrule
\end{tabular}
\begin{tablenotes}
\item \textbf{Note:} $H_0 \colon F(\cdot) = F_0(\cdot)$, $F_0=\Normalp{0}{1}$, $X_i\stackrel{iid}{\sim}\Normalp{\mu}{\sigma^2}$, $10^6$ replications.  
RPs are shown as percentages. 
All methods have exact size. 
\end{tablenotes}
\end{threeparttable}
\end{table}

Additionally, Table 1 and Figure 8 in \citet{AldorNoimanEtAl2013} show a power advantage of the Dirichlet GOF test over the KS and Anderson--Darling (i.e., weighted Cram\'er--von Mises) tests for a variety of distributions.  



\end{document}